\definecolor{orange2}{RGB/HSB}{255,108,0/120,145,225}
\newcommand{\vertiii}[1]{{\left\vert\kern-0.25ex\left\vert\kern-0.25ex\left\vert #1 
    \right\vert\kern-0.25ex\right\vert\kern-0.25ex\right\vert}}
\newtheorem{theorem}{Theorem}
\newtheorem{lemma}[theorem]{Lemma}
\newtheorem{corollary}[theorem]{Corollary}
\newtheorem{proposition}[theorem]{Proposition}
\newtheorem{problem}[theorem]{Problem}
\theoremstyle{remark}
\theoremstyle{definition}
\numberwithin{equation}{section} \numberwithin{table}{section}
\numberwithin{theorem}{section}
\newcommand{\triplenorm}[1]{\ensuremath{|\!|\!| #1 |\!|\!|}}
\def\bf{\boldsymbol f}
\newcommand{\laplace}{\Delta}
\newcommand{\boundary}{\partial}
\newcommand{\restr}[1]{\big|_{#1}} 
\newcommand{\closure}[1]{\overline{#1}}
\renewcommand{\vec}[1]{\mathbf{#1}}
\newcommand{\mat}[1]{\mathbf{#1}}
\newcommand{\abs}[1]{\lvert #1 \rvert}
\newcommand{\norm}[1]{\lVert #1 \rVert}
\newcommand{\scalar}[2]{\left( #1, #2 \right)}
\newcommand{\multiscalar}[2]{( #1, #2 )}
\newcommand{\dual}[2]{\langle #1, #2 \rangle}
\newcommand{\set}[2]{\left\lbrace #1 : #2 \right\rbrace}
\newcommand{\Tau}{\mathcal{T}}
\newcommand{\field}[1]{\mathbb{#1}}
\newcommand{\N}{\field{N}}
\newcommand{\R}{\field{R}}
\newcommand{\dx}{\, \mathrm{d}\vec{x}}
\newcommand{\ds}{\, \mathrm{d}s}
\newcommand{\dt}{\, \mathrm{d}t}
\newcommand*\samethanks[1][\value{footnote}]{\footnotemark[#1]}
\begin{document}

\title{$C^1$-conforming variational discretization of the biharmonic wave equation}

\author{Markus Bause \thanks{Helmut Schmidt University, Faculty of Mechanical Engineering, Holstenhofweg 85, 22043 Hamburg, Germany} 
\and Maria Lymbery \thanks{University of Duisburg-Essen, Faculty of Mathematics, Thea-Leymann-Str. 9, 45127 Essen, Germany} \thanks{Corresponding author. E-mail address: \href{mailto:maria.lymbery@uni-due.de}{maria.lymbery@uni-due.de}} 
\and Kevin Osthues \samethanks[2]
 }

\date{\today}


\maketitle

\begin{abstract}
Biharmonic wave equations are of importance to various 
applications including thin plate analyses. In this work, the numerical approximation of their solutions by a $C^1$-conforming in space and time finite element approach is proposed and analyzed. Therein, the smoothness properties of solutions to the continuous evolution problem is embodied.
High potential of the presented approach for more sophisticated multi-physics and multi-scale systems is expected. 
Time discretization is based on a combined Galerkin and collocation technique. For 
space discretization the Bogner--Fox--Schmit element is applied. Optimal order error estimates are proven. 
The convergence and performance properties are illustrated 
with numerical experiments.      	
\end{abstract}



\section{Introduction}\label{sec:intro}

In this work we propose and analyze a space-time finite element approximation by $C^1$-conforming in space and time discrete functions of the initial-boundary value problem for the biharmonic wave equation,  
\begin{subequations}
    \label{eq:instationary_plate}
    \begin{align}
        \partial_{tt} u(\vec{x}, t) + \Delta^2 u(\vec{x}, t) & =  f(\vec{x}, t), & & \hspace*{-3cm} \text{in} \ \Omega \times \left(0, T\right], \label{eq:instationary_plate_a} \\
        u(\vec{x}, 0) & = u_{0}(\vec{x}), & & \hspace*{-3cm} \text{in} \ \Omega, \label{eq:instationary_plate_b} \\
        \partial_{t} u(\vec{x}, 0) & = u_{1}(\vec{x}), & & \hspace*{-3cm} \text{in} \ \Omega, \label{eq:instationary_plate_c} \\
        u(\vec{x}, t) & = 0, & & \hspace*{-3cm} \text{on} \ \partial \Omega \times \left(0, T\right], \label{eq:instationary_plate_d} \\
        \partial_{\vec{n}} u(\vec{x}, t) & = 0, & & \hspace*{-3cm} \text{on} \ \partial \Omega \times \left(0, T\right], \label{eq:instationary_plate_e}
    \end{align}
\end{subequations}
for a bounded domain $\Omega \subset \R^2$. This model is encountered in the modeling of various physical phenomena, such as plate bending and thin plate elasticity. The dynamic theory of thin Kirchhoff--Love plates investigates the propagation of waves in the plates as well as standing waves and vibration modes. Moreover, the system \eqref{eq:instationary_plate} can be studied as a prototype model for more sophisticated Kirchhoff-type equations, such as the Euler--Bernoulli equation describing the deflection of viscoelastic plates.     

The finite element discretization of fourth order differential operators in space has been subject to intensive research in the literature. The Bogner--Fox--Schmit (BFS) element \cite{BFS65} is a classical $C^1$-conforming  thin plate element obtained by taking the tensor products of cubic Hermite splines.
The discrete solutions are continuously differentiable on tensor product (rectangular) elements, which can be a serious drawback since it limits the applicability of the resulting finite element method. However, for geometries allowing tensor product discretization it is considered to be one of the most efficient elements for plate analysis, cf.\ \cite[p.\ 153]{ZT00}.
It is also a reasonably low order element for plates which is very simple to implement, in contrast with triangular elements which either use higher order polynomials, such as the Argyris element \cite{AFS68}, or macro element techniques, such as the Clough--Tocher element \cite{CT66}. Due to the appreciable advantages of the BFS element and our target to propose a $C^1$-conforming in space and time finite element approach for \eqref{eq:instationary_plate}, the BFS element is applied here.

We note that the finite element approximation of the biharmonic operator continues to be an active field of research, for 
recent contributions, 
see, e.g.~\cite{Burman2020cut,CN21,DE21}. In particular, discretization methods that support polyhedral meshes (the mesh cells can be polyhedra 
or have a simple shape but contain hanging nodes) and hinge on the primal formulation of the biharmonic equation leading to a symmetric positive definite system matrix are currently focused.
These methods can be classified into three groups, depending on the dimension of the smallest geometric object to which discrete unknowns are attached. This criterion influences the stencil of the method. Furthermore, it has an impact on the level of conformity that can be achieved for the discrete solution.
The methods in the first group were developed for the case where $\Omega \subset \R^2$. 
They attach discrete unknowns to the mesh vertices, edges, and cells and can achieve $C^1$-conformity. Salient examples are the $C^1$-conforming virtual element methods (VEM) from \cite{BM13,CM16} and the $C^0$-conforming VEM from \cite{ZCZ16}. Another example 
is the nonconforming 
VEM from \cite{BS05,ZZCM18}. 
The methods in the second group attach discrete unknowns only to the mesh faces and cells for $\Omega\subset \R^d$, with $d\geq 2$. They admit static condensation, and 
provide a nonconforming approximation to the solution. The two salient examples are the weak Galerkin methods from \cite{MWY14,ZZ15,YZZ20} and 
the hybrid high-order method from \cite{BPGK18}.
Finally, the methods in the third group attach discrete unknowns only to the mesh cells and belong to the class of interior penalty discontinuous Galerkin methods. 
These are also nonconforming methods; cf.\ \cite{MS03,SM07,GH09}. Important examples of nonconforming finite elements on simplicial meshes are the Morley element \cite{M68,WX06} and the Hsieh--Clough--Tocher element (cf., e.g., \cite[Chap.~6]{ZZCM18}). 
The spatial discretization of wave problems by discontinuous Galerkin methods has been focused further in the literature, cf., e.g.,\cite{grote2006,anton2016}.

Among the most attractive methods for time discretization of second-order differential equations in time are the so-called \textit{continuous Galerkin or Galerkin--Petrov} 
(cf., e.g., \cite{BL94,FP96,KB14}) and the \textit{discontinuous Galerkin} (cf., e.g., \cite{J93,KB14}) schemes. 
For lowest order elements, these methods can be identified with certain well-known difference schemes, e.g.\ with the classical trapezoidal Newmark scheme (cf., e.g., \cite{W84,W90,H00}), the backward Euler scheme and the Crank--Nicolson scheme.
Strong relations and equivalences between variational time discretizations, collocation methods and Runge--Kutta schemes have been observed. In the literature, the relations are exploited in the formulation and analysis of the schemes. For this we refer to, e.g., \cite{AMN09,AMN11}.
Recently, variational time discretizations of higher order regularity in time \cite{Anselmann2020Galerkin,Bause2020post-processed} have been devised for the second-order hyperbolic wave equations and analyzed carefully. In particular, optimal order error estimates are proved in \cite{Anselmann2020Galerkin,Bause2020post-processed}. In \cite{Bause2020post-processed}, a $C^1$-conforming in time family of space-time finite element approximation that is based on a post-processing of the continuous in time Galerkin approximation is introduced.
Concepts that are developed in \cite{Ern2016discontinuous} for first-order hyperbolic problems are transferred to the wave equation written as a first order system in time. In  \cite{Bause2020post-processed}, a family of Galerkin--collocation approximation schemes with $C^1$- and $C^2$-regular in time discrete solutions are proposed and investigated by an optimal order error analysis and computational experiments.
The conceptual basis of the families of approximations to the wave equation is the establishment of a connection between the Galerkin method for the time discretization and the classical collocation methods, with the perspective of achieving the accuracy of the former with reduced computational costs provided by the latter in terms of less complex algebraic systems.
Further numerical studies for the wave equation can be found in \cite{AB19,AB20_2}. For the application of the Galerkin--collocation to mathematical models of fluid flow and systems of ordinary differential equations we refer to \cite{AB21,BM19,BMW17}.
In the numerical experiments, the Galerkin--collocation schemes have proved their superiority over lower-order and standard difference schemes. In particular, energy conservation is ensured which is an essential feature for discretization schemes to second-order hyperbolic problems since the physics of solutions to the continuous problem are preserved. 

As a logical consequence, for the biharmonic wave problem \eqref{eq:instationary_plate} it appears to be promising to combine the Galerkin--collocation time discretization with the BFS finite element discretization of the spatial variables to a $C^1$-conforming approximation in space and time. This is done here. 
We expect that the uniform variational approximation and higher order regularity will be advantageous for future applications in  multi-physics systems based on \eqref{eq:instationary_plate} as a subproblem, the development of multi-scale approaches (in space and time) for \eqref{eq:instationary_plate} and the application of space-time adaptive methods. For the latter one, we refer to \cite{BGR10,BBK20,KBB15} for parabolic problems.
In this work, we present the combined Galerkin--collocation and BFS finite element approximation of \eqref{eq:instationary_plate}.
Key ingredients of the construction of the Galerkin--collocation approach are the application of a special quadrature formula, proposed in~\cite{JB09}, and the definition of a related interpolation operator for the right-hand side term of the variational equation.
Both of them use derivatives of the given function. The Galerkin--collocation scheme relies in an essential way on the perfectly matching set of the polynomial spaces (trial and test space), quadrature formula, and interpolation operator.
Then, a numerical error analysis is performed, optimal order error estimates are proved. Here, we restrict ourselves to presenting and stressing the differences to the wave equation for the Laplacian considered in \cite{Anselmann2020Galerkin}.
Finally, a numerical study of the proposed discretization scheme is presented in order to illustrate the analyses. 

This paper is organized as follows. In Section~\ref{sec:preliminary}, we introduce our notation and formulate problem \eqref{eq:instationary_plate} as a first-order system in time. In Section~\ref{sec:discretizations}, the Galerkin--collocation method is considered for time discretization. Some beneficial results for the error analysis are summarized in Section~\ref{sec:error_analysis}. In Section~\ref{sec:error_estimates}, we prove error estimates for the introduced Galerkin--collocation method for the plate vibration problem~\eqref{eq:instationary_plate}. Finally, in Section~\ref{sec:num} we present a numerical study confirming the error estimates and perform a comparative study with only continuous in time approximations.

\section{Preliminaries and notation}
\label{sec:preliminary}
\subsection{Evolution form}
Throughout this paper, standard notation is used for Sobolev and Bochner spaces. 
By $B$ we denote a Banach space. 
We use  
$\scalar{ \cdot}{\cdot}$ for the 
$L^2(\Omega)$ inner product inducing the norm 
$$
\Vert \cdot \Vert = \Vert \cdot\Vert_{L^2(\Omega)} 
$$
and $\dual{\cdot}{ \cdot}$ for the duality pairing between a Hilbert space and its dual space.
For the Sobolev norms we adopt the notation 
$$
\Vert \cdot \Vert_m = \Vert \cdot\Vert_{H^m(\Omega)}\quad \text{for}\,\, m\in \mathbb{N},\, m\ge 1
$$
and further define the spaces 
$$
H=L^2(\Omega) \quad \text{and} \quad V = H^2_0(\Omega).
$$ 
Let $V'$ be the dual space of $V$. We introduce the operator $A:V \to V'$ 
which for any given $u\in V$ is uniquely defined by
$$\dual{Au}{v}= \scalar{\Delta u}{\Delta v} \qquad \forall v\in V$$
and also the operator $\mathcal{L}:V\times H\to H\times V'$ given by
$$
\mathcal{L}=\begin{pmatrix}
0 & -I \\
A & 0
\end{pmatrix}.
$$
Here $I$ is the identity operator that acts on $H$. For the error analysis, we define the energy norm on $ H_0^2(\Omega) \times L^2(\Omega) $ by $ \triplenorm{(w_0, w_1)}^2 = \norm{\laplace w_0}^2 + \norm{w_1}^2 $.

In order to formulate problem~\eqref{eq:instationary_plate} in 
an evolutionary form we further introduce the space
$$
X:=L^2(0,T;V)\times L^2(0,T;H)
$$
and 
set
$$
u^0=u \qquad \text{and}\qquad u^1=\partial_t u.
$$

With this notation then problem~\eqref{eq:instationary_plate} can be equivalently stated as: 
Find $U=(u^0,u^1)\in X$ satisfying
\begin{subequations}\label{evolution_form}
\begin{align}
\partial_t U+\mathcal{L}U&=F \qquad \text{in}\quad (0,T)\\
U(0)&=U_0
\end{align}
\end{subequations}
where $f\in L^2(0,T;H)$ is given, $F=(0,f)$ and $U_0=(u_0,u_1)$.

The existence and uniqueness of solutions to \eqref{evolution_form} is a classical result, 
cf.~\cite[p. 273, Thm. 1.1]{Lions1971optimal}, and \cite[p. 275, Thm. 8.2]{Lions1972non-homogeneous}. Further, we have the following regularity result $ H^2(\Omega) \subset \subset C(\overline{\Omega})$ for $\Omega \subset \R^2$, cf. \cite{adams:2003}.

\subsection{Time discretization}
Our aim is to replace the time interval with a discrete time mesh and subsequently to iteratively compute 
the solution of~\eqref{eq:instationary_plate} 
in the time nodes. For this reason, we split $I = \left(0, T\right]$ into $N\in \mathbb{N}$ time subintervals 
\begin{align*}
I_{n} & = \left( t_{n-1}, t_{n} \right], \qquad  n = 1, \ldots, N,
\end{align*}
where $ 0 = t_{0} < t_{1} < \cdots < t_{N} = T $ and introduce the time step parameter $ \tau = \max_{n = 1, \dots, N} \tau_{n} $, where $ \tau_{n} = t_{n} - t_{n-1} $. 
The set $\mathcal{M}_\tau := \{I_1, \ldots , I_N\}$ of time intervals represents the
time mesh. For simplicity, we use $ I_0 = \{t_0\}$.

We denote the space of all $B$-valued polynomials in time of order $k\in \N_0$ over 
a given interval $I_n$ by 
\begin{equation*}
\mathbb{P}_k(I_n;B)=\left\{w_\tau:I_n \to B: w_\tau(t)=\sum_{j=0}^k 
W^j t^j,\; \forall t\in I_n,\; W^j \in B \; \forall j \right\}.
\end{equation*}

Moreover, for an integer $k \in \mathbb{N}$ we introduce the space of globally continuous functions in time, $X_{\tau}^k(B)$, and the space of global $L^2$-functions 
in time, $Y_{\tau}^k(B)$, as follows 
\begin{align*}
X_{\tau}^k(B)&:=\left\{w_\tau\in C(\bar{I};B): w_\tau\restr{I_n}\in \mathbb{P}_k(I_n;B)\;\, \forall I_n \in \mathcal{M}_{\tau} \right\}, \\
Y_{\tau}^k(B)&:=\left\{w_\tau\in L^2(I;B): w_\tau\restr{I_n}\in \mathbb{P}_k(I_n;B)\;\,\forall I_n \in \mathcal{M}_{\tau} \right\}.
\end{align*}

We designate 
\begin{equation*}
\partial_t^s w(t_n^+):=\lim_{t\rightarrow t_n^{+0}} \partial_t^s w(t) \quad \text{and}\quad 
\partial_t^s w(t_n^-):=\lim_{t\rightarrow t_n^{-0}} \partial_t^s w(t)
\end{equation*}
to be the one-sided limits of the $s$-th derivative of a piecewise sufficiently smooth with respect to the time mesh $\mathcal{M}_{\tau}$ function 
$w:I \to B$ where $s\in \mathbb{N}_0$.

\section{Discretizations of space and time}
\label{sec:discretizations}
\subsection{Space discretization}


Let $\mathcal{T}_h$ be a shape-regular mesh of the spatial domain $\Omega$ with $h>0$ denoting the mesh size and 
let 
\begin{equation*}V_{h} = 
\left\lbrace
v_h \in C^{1}(\Omega):v_h\restr{T} \in \mathbb{Q}_{3}(T),v_h\restr{\boundary \Omega} = 0, \, \partial_{\vec{n}}v_h\restr{\boundary \Omega} = 0 \ \forall T \in \Tau_h
\right\rbrace
\end{equation*}
%
be the finite element space built on the mesh using the Bogner--Fox--Schmit element. Here  $\mathbb{Q}_3(T)$ denotes the set of all polynomials with 
maximum degree $3$ in each variable. 
%

%
We denote the $L^2$-orthogonal projection onto $V_h$ by $P_h$, i.e.,
\begin{equation*}
(P_h w,v_h)= (w,v_h) \qquad \forall v_h \in V_h
\end{equation*}
and define the elliptic operator $R_h:V \to V_h$ via 
\begin{equation}
\label{R_h}
(\Delta R_h w,\Delta v_h)=(\Delta w,\Delta v_h)\qquad \forall v_h\in V_h.
\end{equation}
 
For $w\in H^s \cap H_0^2$ we have the estimates 
\begin{equation}\label{ell_proj_err2}
\Vert w - R_h w\Vert_m \le Ch^{4-m} \Vert w \Vert_4, \qquad 0\le m\le 3
\end{equation}
and
\begin{equation*}
\Vert \Delta(w-R_h w)\Vert_m \le C h^{2-m}\Vert w\Vert_4, \qquad 0\le m\le 1
\end{equation*}
which follow directly from the interpolation error estimates given in  \cite{Burman2020cut} along with Cea's lemma and the Aubin--Nitsche trick.

Further, we introduce the $L^2$-projection $\mathcal{P}_h:H\times H \to V_h \times V_h$ and 
the elliptic projection $\mathcal{R}_h:V\times V\to V_h\times V_h$ both of which are onto the product space $V_h\times V_h$
and also the discrete operator $A_h:V\to V_h$ for which it holds
\begin{equation}\label{A_h}
\scalar{A_h w}{v_h}=\scalar{\Delta w}{\Delta v_h}\qquad \forall v_h \in V_h.
\end{equation}  
Therefore, if $w\in V\cap H^4(\Omega)$, we have 
$$
\scalar{A_h w}{v_h}=\scalar{\Delta w}{\Delta v_h}=\dual{A w}{v_h}\qquad \forall v_h \in V_h
$$
or
$$
A_h w = P_h Aw \qquad \text{for }w\in V\cap H^4(\Omega).
$$
Moreover, for the operator $\mathcal{L}_h:V\times H\to V_h\times V_h$ defined as 
\begin{equation}\label{A_h_mathcal}
\mathcal{L}_h=\begin{pmatrix}
0 & -P_h \\ A_h & 0
\end{pmatrix}
\end{equation}
the following relation holds
$$
\scalar{\mathcal{L}_h W}{\Phi_h}=\scalar{-w^1}{\varphi_h^0}+\scalar{\Delta w^0}{\Delta\varphi_h^1}
=  \scalar{-w^1}{\varphi_h^0}+\dual{A w^0}{\varphi_h^1}
=\dual{\mathcal{L} W}{\Phi_h}
$$
for $W=(w^0,w^1)\in (V\cap H^4(\Omega))\times H$ and for all $\Phi_h=
(\varphi_h^0,\varphi_h^1)\in V_h\times V_h$ which demonstrates the consistency of 
$\mathcal{L}_h$ on $(V\cap H^4(\Omega))\times H$, i.e., 
\begin{equation}\label{consistency}
\mathcal{L}_h W=\mathcal{P}_h \mathcal{L}W.
\end{equation}

Finally, an appropriate approximation in $V_h\times V_h$ of the initial value $U_0\in V\times H$ 
is denoted by $U_{0,h}$.

\subsection{Numerical integration}
The following makes 
use of the Hermite-type, Gauss and Gauss-Lobatto quadrature formulas 
which for a sufficiently regular function $g$ on the interval $\bar{I}_n=[t_{n-1},t_n]$ read as 
\begin{subequations}\label{quadrature_formulas}
\begin{align}
Q_n^H(g)&=\left(\frac{\tau_n}{2} \right)^2 \hat{w}_L^H \partial_t g(t_{n-1}^+)
+\frac{\tau_n}{2}\sum_{s=1}^{k-1} \hat{w}_s^H g(t_{n,s}^H)
+\left(\frac{\tau_n}{2}\right)^2 \hat{w}_R^H \partial_t g(t_n^{-}),\label{Hermite}\\
Q_n^G(g)&=\frac{\tau_n}{2}\sum_{s=1}^{k-1}\hat{w}_s^G g(t_{n,s}^G), 
\label{Gauss}\\
\quad Q_n^{GL}(g)&=\frac{\tau_n}{2}\sum_{s=1}^{k}\hat{w}_s^{GL} g(t_{n,s}^{GL}),
\label{Gauss-Lobatto}
\end{align}
\end{subequations}
respectively. Here, $t_{n,s}^H$, $t_{n,s}^G$ and $t_{n,s}^{GL}$
are the corresponding quadrature points on the interval while 
$\{\hat{w}_L^H,\,\hat{w}_R^H, \,\hat{w}_s^H\}$, $\hat{w}_s^{G}$ and 
$\hat{w}_s^{GL}$ denote the corresponding weights.


We also consider the global Hermite interpolation $I_{\tau}^H:C^1(\bar{I};B)\to X_{\tau}^k(B)$ defined as 
\begin{equation}\label{global_Hermite}
I_{\tau}^H w\restr{I_n}:= I_n^H(w\restr{I_n})
\end{equation}
for all $n=1,\ldots,N$ where $I_n^H:C^1(\bar{I}_n;B)\to \mathbb{P}_k(\bar{I}_n;B)$ denotes the local Hermite interpolation operator 
with respect to point values and first derivatives on the interval $\bar{I}_n$.

\subsection{Space-time discretizations}

In this subsection we introduce the discretization of the biharmonic wave problem~\eqref{eq:instationary_plate} by a space-time finite element approach 
utilizing 
a Galerkin--collocation approximation (cf.\ \cite{Anselmann2020Galerkin}) of the time variable along with BFS element for the approximation in space. The time discretization combines Galerkin and collocation techniques.  Moreover, for comparative studies and in order to analyze the impact of the discrete solution's higher regularity in time on the accuracy of the numerical results, the standard continuous in time Galerkin--Petrov approach (cf., e.g., \cite{FP96,Bause2020post-processed}) is presented here 
briefly. Within the latter familiy of schemes, the Crank--Nicolson method is recovered for piecewise linear approximations.  

\subsubsection{The Galerkin--collocation method cGP-\textnormal{C}\texorpdfstring{$^1(k)$}{\textasciicircum 1(k)}}
The variational time discretization for the plate vibration problem~\eqref{eq:instationary_plate} is derived following 
the idea in~\cite{Bause2017error, Anselmann2020Galerkin} and reads as follows:
\begin{problem}\label{problem_cgp}
Let $U_{\tau,h}(t_{n-1}^{-})$ for $n>1$ and $U_{\tau,h}(t_0^{-})=U_{0,h}$ for $n=1$ 
be given. Find $U_{\tau,h}\restr{I_n}\in (\mathbb{P}_k(I_n;V_h))^2$ satisfying 
\begin{subequations}\label{cgp-c1_problem}
\begin{align}
U_{\tau,h}(t_{n-1}^{+}) & = U_{\tau,h}(t_{n-1}^{-}), \label{cgp-c1_problem_a}\\
\partial_t U_{\tau,h}(t_{n-1}^{+}) & =-\mathcal{L}_h U_{\tau,h}(t_{n-1}^{+}) +
\mathcal{P}_h F(t_{n-1}^{+}),\label{cgp-c1_problem_b} \\
\partial_t U_{\tau,h}(t_{n}^{-}) & = -\mathcal{L}_h U_{\tau,h}(t_{n}^{-}) + \mathcal{P}_h 
F(t_n^{-}), \label{cgp-c1_problem_c}\\
Q_n^H(\scalar{\partial_t U_{\tau,h}}{V_{\tau,h}}
+ \scalar{\mathcal{L}_h U_{\tau,h}}{V_{\tau,h}})
& = Q_n^H(\scalar{F}{V_{\tau,h}}) \label{cgp-c1_problem_d}
\end{align}
\end{subequations}
\end{problem}
for all $V_{\tau,h}\in(\mathbb{P}_{k-3}(I_n;V_h))^2$.

The existence and uniqueness of a solution to Problem~\ref{problem_cgp} 
has been discussed in~\cite{Anselmann2020Galerkin}, see also~\cite{Bause2017error}.

From the definition of the scheme it also follows that $U_{\tau,h}\in (C^1(\bar{I};V_h))^2$ and 
\eqref{cgp-c1_problem_b} can be written as 
\begin{equation*}
\partial_t U_{\tau,h}(t_{n-1}^+)=\partial_t U_{\tau,h}(t_{n-1}^-),
\end{equation*}
where $\partial_t U_{\tau,h}(t_{0}^-)=-\mathcal{L}_hU_{0,h}+\mathcal{P}_h F(0) $
and, therefore, Problem~\ref{problem_cgp} can be equivalently written as:

\begin{problem}\label{cgp-c1_problem_2}
    Let $ k \geq 3 $ be fixed and 
    be given the values 
    $ \left(u^0_{\tau,h}\restr{I_{n-1}}(t_{n-1}), u^1_{\tau, h}\restr{I_{n-1}}(t_{n-1})\right) \in V_{h}^{2} $ for $1< n \le N $ 
    and 
    $ \left(u^0_{\tau,h}\restr{I_{0}}(t_{0}), u^1_{\tau, h}\restr{I_{0}}(t_{0})\right) = (u_{0, h}, u_{1, h}) $ 
    for 
    $ n = 1 $. Then the Galerkin--collocation for $ \left(u^0_{\tau,h}\restr{I_{n}}, u^1_{\tau, h}\restr{I_{n}}\right) 
    \in \mathbb{P}_{k}(I_{n}; V_{h})^{2} $ is defined as 
    \begin{subequations}
        \label{eq:collocation}
        \begin{align}
        \partial_{t}^{s} u^0_{\tau,h}\restr{I_{n}}(t_{n-1}) & = \partial_{t}^{s} u^0_{\tau,h}\restr{I_{n-1}}(t_{n-1}), \quad s \in \{0, 1\}, \label{eq:gcu} \\
        \partial_{t}^{s} u^1_{\tau, h}\restr{I_{n}}(t_{n-1}) & = \partial_{t}^{s} u^1_{\tau, h}\restr{I_{n-1}}(t_{n-1}), \quad s \in \{0, 1\}, \label{eq:gcv} \\
        \partial_{t} u^0_{\tau,h}\restr{I_{n}}(t_{n}) - u^1_{\tau, h}\restr{I_{n}}(t_{n}) & = 0, \label{eq:gcvelot} \\
        \partial_{t} u^1_{\tau, h}\restr{I_{n}}(t_{n}) + {A}_{h} u^0_{\tau,h}\restr{I_{n}}(t_{n}) & = f(t_{n}), \label{eq:gcpdet}
        \end{align}
        and
        \begin{align}
        \int_{I_{n}} \int_{\Omega} \partial_{t} u^0_{\tau,h} \, \varphi^0_{\tau, h} - u^1_{\tau, h} \, \varphi^0_{\tau, h} \dx \dt & = 0, \label{eq:gcvelo} \\
        \int_{I_{n}} \int_{\Omega} \partial_{t} u^1_{\tau, h} \, \varphi^1_{\tau, h} + {A}_{h} u^0_{\tau,h} \, \varphi^1_{\tau, h} \dx \dt & = \int_{I_{n}} \int_{\Omega} f \, \varphi^1_{\tau, h} \dx \dt, \label{eq:gcpde}
        \end{align}
    \end{subequations}
    for all $ \left(\varphi^0_{\tau, h}, \varphi^1_{\tau, h}\right) \in (\mathbb{P}_{k-3}(I_{n}; V_{h}))^{2} $.
\end{problem}

The discrete initial values $ (u_{0,h}, u_{1, h}) \in V_{h}^{2} $ are determined from the interpolation of the functions 
$ (u_{0}, u_{1}) $. 
We use the interpolant $ u_{1, h} $ for the value $ \partial_{t} u^0_{\tau, h}(0) $ from \eqref{eq:gcu}. 
In order to obtain an appropriate value for $ \partial_{t} u^1_{\tau, h}(0) $ in \eqref{eq:gcv} 
we consider~\eqref{eq:instationary_plate_a} at time $ t = 0 $ and interpolate the function 
\begin{align*}
    \partial_{t} u^1(\vec{x}, 0) = f(\vec{x}, 0) - \laplace^{2} u(\vec{x}, 0).
\end{align*}
The collocation conditions ensure a reduction in the size of the test space which results in a smaller linear system of equations. 

Next, we summarize a result for scheme~\eqref{cgp-c1_problem} 
used in the error analysis, 
cf.~\cite{Anselmann2020Galerkin}.

\begin{proposition}\label{prop1}
Consider the solution $U_{\tau,h}\in (X_{\tau}^k(V_h))^2$ of Problem~\ref{problem_cgp}. 
It holds that 
\begin{equation*}
B_{n}^{GL}(U_{\tau, h}, V_{\tau, h})
= Q_n^{GL} (\scalar{I_{\tau}^H F}{V_{\tau,h}})
\end{equation*}
for all $V_{\tau,h}\in (\mathbb{P}_{k-2}(I_n;V_h))^2$ and for $n=1,\ldots, N$, where
\begin{equation*}
    B_{n}^{GL}(U_{\tau, h}, V_{\tau, h}) = Q_n^{GL}(\scalar{\partial_t U_{\tau,h}}{V_{\tau,h}}+ \scalar{\mathcal{L}_h U_{\tau,h}}{V_{\tau,h}}).
\end{equation*}
\end{proposition}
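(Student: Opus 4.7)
The plan is to prove the identity by rewriting the Galerkin--collocation scheme in the equivalent Gauss--Lobatto form stated in the proposition. My strategy is to show that the residual
$$R_{\tau,h} := \partial_t U_{\tau,h} + \mathcal{L}_h U_{\tau,h} - \mathcal{P}_h I_\tau^H F$$
is orthogonal to all $V_{\tau,h} \in (\mathbb{P}_{k-2}(I_n;V_h))^2$ under the Gauss--Lobatto quadrature $Q_n^{GL}$. The claim then follows immediately from the definition of $B_n^{GL}$ by moving the $\mathcal{P}_h I_\tau^H F$ term to the right-hand side and using that $(\mathcal{P}_h \cdot, V_{\tau,h}) = (\cdot, V_{\tau,h})$.

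First I would replace $F$ by $I_\tau^H F$ on the right-hand side of \eqref{cgp-c1_problem_d}. This is legitimate because $Q_n^H$ only accesses function values at the Hermite nodes and first derivatives at the endpoints of $I_n$, and by construction $I_\tau^H F$ matches $F$ precisely on this data; hence $Q_n^H(\scalar{F}{V_{\tau,h}}) = Q_n^H(\scalar{I_\tau^H F}{V_{\tau,h}})$. Next, I would combine the two endpoint collocation conditions \eqref{cgp-c1_problem_b} and \eqref{cgp-c1_problem_c} with the endpoint identities $I_\tau^H F(t_{n-1}^+) = F(t_{n-1}^+)$ and $I_\tau^H F(t_n^-) = F(t_n^-)$ to obtain $R_{\tau,h}(t_{n-1}^+) = R_{\tau,h}(t_n^-) = 0$. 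Since $R_{\tau,h}$ is a polynomial of degree at most $k$ in time, these two vanishing conditions force a factorization
$$R_{\tau,h}(t) = (t - t_{n-1})(t_n - t)\,\tilde R_{\tau,h}(t)$$
with $\tilde R_{\tau,h} \in (\mathbb{P}_{k-2}(I_n;V_h))^2$.

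Because $t_{n-1}$ and $t_n$ are nodes of $Q_n^{GL}$ and $R_{\tau,h}$ vanishes there, the endpoint contributions of $Q_n^{GL}(\scalar{R_{\tau,h}}{V_{\tau,h}})$ drop out, reducing the sum to a Gauss--Jacobi-type quadrature at the $k-2$ interior Gauss--Lobatto nodes against the weight $(t-t_{n-1})(t_n-t)$. I would then use the exactness of this interior quadrature together with the rewritten \eqref{cgp-c1_problem_d} to conclude that $Q_n^{GL}(\scalar{R_{\tau,h}}{V_{\tau,h}}) = 0$ for $V_{\tau,h} \in (\mathbb{P}_{k-3}(I_n;V_h))^2$, which requires a short identification of $Q_n^H$ and $Q_n^{GL}$ on the relevant polynomial products modulo the factor $(t-t_{n-1})(t_n-t)$. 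The extension to $V_{\tau,h} \in (\mathbb{P}_{k-2}(I_n;V_h))^2$ is then obtained by decomposing $V_{\tau,h}$ into its $\mathbb{P}_{k-3}$-component plus a one-dimensional complement per coordinate and checking that the complement produces only terms that are already annihilated by the factorization of $R_{\tau,h}$.

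The hard part will be the last step—verifying the test-space extension from $\mathbb{P}_{k-3}$ to $\mathbb{P}_{k-2}$. The dimension count is clean (the two endpoint collocation conditions supply exactly the two extra test degrees per slab), but a careful basis-level argument is needed, exploiting the special structure of the Gauss--Lobatto nodes and how the weight $(t - t_{n-1})(t_n - t)$ couples with the Gauss--Jacobi exactness on the interior nodes. Everything else is bookkeeping built on three ingredients: the Hermite-interpolation identity, the endpoint vanishing of the residual, and the reduction of $Q_n^{GL}$ to its interior nodes via this factorization.
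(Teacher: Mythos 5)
The paper itself offers no proof of Proposition~3.4 --- it is quoted from the reference [Anselmann2020Galerkin] --- so your attempt has to be measured against the argument given there. Your skeleton is the right one and coincides with that argument up to the point you yourself flag as ``the hard part'': introduce the residual $R_{\tau,h}=\partial_t U_{\tau,h}+\mathcal{L}_h U_{\tau,h}-\mathcal{P}_h I_\tau^H F\in(\mathbb{P}_k(I_n;V_h))^2$, use the two endpoint collocation conditions \eqref{cgp-c1_problem_b}--\eqref{cgp-c1_problem_c} to get $R_{\tau,h}(t_{n-1}^+)=R_{\tau,h}(t_n^-)=0$, factor out $(t-t_{n-1})(t_n-t)$, and observe that only the interior Gauss--Lobatto nodes contribute to $Q_n^{GL}(\scalar{R_{\tau,h}}{V_{\tau,h}})$. (Your replacement of $F$ by $I_\tau^H F$ under $Q_n^H$ is also essentially right, but note that it rests on the design property that the value nodes of $Q_n^H$ coincide with interpolation nodes of $I_n^H$ --- part of the ``perfectly matching'' quadrature/interpolation pair alluded to in the introduction --- and not on anything that follows from the definitions in Section~3 alone; this should be invoked explicitly.)

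The genuine gap is the extension of the test space from $(\mathbb{P}_{k-3})^2$ to $(\mathbb{P}_{k-2})^2$, and the route you sketch for it cannot work. Splitting $V_{\tau,h}$ into a $\mathbb{P}_{k-3}$ part plus a degree-$(k-2)$ complement and ``checking that the complement produces only terms that are already annihilated by the factorization of $R_{\tau,h}$'' fails, because the factorization alone annihilates nothing there: for $k=3$ on $I_n=[-1,1]$, a residual $R=b\,(1-t^2)\,t$ with $b\in V_h^2$ vanishes at both endpoints and satisfies $\int_{I_n}R\,\mathrm{d}t=0$, yet $\int_{I_n}\scalar{R}{d\,t}\,\mathrm{d}t=\tfrac{4}{15}\scalar{b}{d}\neq 0$ for $V=d\,t\in\mathbb{P}_{k-2}$; moreover the interior Gauss--Jacobi rule you invoke is exact only up to degree $2k-5$, one short of the degree $2k-4$ of $\scalar{\tilde R_{\tau,h}}{V_{\tau,h}}$, so no exactness argument can close the case $V_{\tau,h}\in\mathbb{P}_{k-2}$ --- indeed the exact integral is nonzero while the quadrature value is zero. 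The missing idea is an orthogonal-polynomial identification: converting the Galerkin conditions to exact integrals via the exactness of $Q_n^H$ on $\mathbb{P}_{2k-3}$ shows that $\tilde R_{\tau,h}=R_{\tau,h}/[(t-t_{n-1})(t_n-t)]$ is orthogonal to $\mathbb{P}_{k-3}(I_n)$ with respect to the weight $(t-t_{n-1})(t_n-t)$, so each scalar component $\scalar{\tilde R_{\tau,h}(\cdot)}{v}$, $v\in V_h^2$, is a multiple of the degree-$(k-2)$ orthogonal polynomial for that weight, whose roots are exactly the $k-2$ interior nodes of the $k$-point Gauss--Lobatto formula \eqref{Gauss-Lobatto}. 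Hence $R_{\tau,h}$ vanishes at \emph{every} Gauss--Lobatto node (in the example above, at $t=0$), and $Q_n^{GL}(\scalar{R_{\tau,h}}{V_{\tau,h}})=0$ for arbitrary test functions, in particular for $V_{\tau,h}\in(\mathbb{P}_{k-2}(I_n;V_h))^2$. Without this step your argument stops exactly where the proposition begins to say something beyond the defining equations of Problem~\ref{problem_cgp}.
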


\subsubsection{The cGP(\texorpdfstring{$k$}{k})-method}
\label{sec:crank_nicolson}
The second time discretization method for the dynamic plate vibration problem 
considered is the Crank--Nicolson method~\cite{farago}. 
The differential equation is solved iteratively by determining the solution at the time interval points $t_{n}$. 
To derive the Crank--Nicolson method, we use  
$ \left(u^0_{\tau, h}\restr{I_{n}}, u^1_{\tau, h}\restr{I_{n}}\right) \in \mathbb{P}_{k}(\closure{I}_{n}; V_{h})^{2} $ and 
$ \left(\varphi^0_{\tau, h}\restr{I_{n}}, \varphi^1_{\tau, h}\restr{I_{n}}\right) \in \mathbb{P}_{k - 1}(\closure{I}_{n}; V_{h})^{2} $ 
as ansatz~\cite{Anselmann2020Galerkin}. 
Since the solution space differs from the test space, this is referred to as a continuous Galerkin--Petrov method, or cGP($k$) for short. 
The discrete solution functions are globally continuous and
use piecewise polynomials of degree $k$ for the time discretization.
The cGP(1)-method corresponds to the Crank--Nicolson method. 

In contrast to the Galerkin--collocation from the previous subsection, 
the Crank--Nicolson method only provides a solution that is continuous in time, but not a continuously differentiable solution.



\section{Error analysis}
\label{sec:error_analysis}

Let us first note that the results from~\cite{Karakashian2005convergence} for semilinear second order hyperbolic wave equations can 
be carried over to the plate vibration problem when $f=f(u)$, for more details see the appendix. 

Next, we present several definitions required for the error analysis. Let $B\subset H$ and $l \in \mathbb{N}$. 
The local $L^2$-projections $\Pi_n^{l}: L^2(I_n;B)\to \mathbb{P}_{l}(I_n;B)$ are defined by 
\begin{equation*}
\int_{I_n} \scalar{\Pi_n^l w}{q}\dt = \int_{I_n} \scalar{w}{q} \dt \qquad \forall q\in \mathbb{P}_l(I_n;B).
\end{equation*}

We consider the Hermite interpolant in time $I_{\tau}^{k+1}: C^1(\bar{I};B)\to C^1(\bar{I};B)\cap X_{\tau}^{k+1}(B)$ 
studied in~\cite{Bause2020post-processed,Ern2016discontinuous}. For this operator it is fulfilled that 
$$
I_{\tau}^{k+1} u(t_n) = u(t_n),\quad \partial_t I_{\tau}^{k+1} u(t_n) = \partial_t u(t_n), \quad n=0,\ldots,N,
$$
and 
$$
I_{\tau}^{k+1} u(t_{n,\mu}^{GL}) = u(t_{n,\mu}^{GL}), \quad n=1,\ldots, N, \, \mu =2,\ldots, k-1
$$ 
and for a smooth function $u$, the following error estimates hold true on each interval $I_n$
\begin{align*}
\Vert \partial_t u-\partial_t I_{\tau}^{k+1} u\Vert_{C^0 (\bar{I}_n;B)} &\le C \tau_n^{k+1} \Vert u \Vert_{C^{k+2}(\bar{I}_n;B)},\\ 
\Vert \partial_t^2 u-\partial_t^2 I_{\tau}^{k+1} u\Vert_{C^0 (\bar{I}_n;B)} &\le C \tau_n^{k} \Vert u \Vert_{C^{k+2}(\bar{I}_n;B)}.
\end{align*}

Moreover, we define the operator $R_{\tau}^k u\restr{I_n}\in \mathbb{P}_{k}(I_n;B)$ for $n=1,\ldots,N$ 
via the $(k+1)$ conditions
\begin{align*}
R_{\tau}^k u\restr{I_n}(t_{n-1})&=I_{\tau}^{k+1}u(t_{n-1}), \\
\partial_t R_{\tau}^{k+1} u\restr{I_n}(t_{n,s})&=\partial_t I_{\tau}^{k+1} u(t_{n,s}), \quad s =0,\ldots,k
\end{align*}
and we set $R_{\tau}^k u(0):=u(0)$.

Here, we briefly summarize some of the properties of $R_{\tau}^k$ that are important for the analysis. 
Their proofs can be found in~\cite{Bause2020post-processed,Ern2016discontinuous}.

\begin{lemma}\label{lemma_R_tauk}
    Let $k\ge 3$. For $ n = 1, \ldots, N $ and $ u \in C^{k+1}(\closure{I}_n; B) $ the estimate
    \begin{align*}
        \norm{u - R_{\tau}^k u}_{C^0(\closure{I}_n; B)} \leq C \tau_n^{k+1} \norm{u}_{C^{k+1}(\closure{I}_n; B)}
    \end{align*}
    holds.
\end{lemma}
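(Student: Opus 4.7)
The plan is to split the error through the intermediate Hermite interpolant by writing
\[
u - R_\tau^k u \;=\; (u - I_\tau^{k+1} u) \;+\; (I_\tau^{k+1} u - R_\tau^k u),
\]
and bound the two summands on $\bar I_n$ separately. The first summand is handled by the standard interpolation error estimate for the Hermite operator $I_\tau^{k+1}$ recorded earlier in the excerpt. The sharp Hermite bound is $O(\tau_n^{k+2})\|u\|_{C^{k+2}}$, but restricted to the $C^{k+1}$ regularity available here it degrades by one power of $\tau_n$ and becomes $C\tau_n^{k+1}\|u\|_{C^{k+1}(\bar I_n;B)}$, which already matches the asserted order. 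The work therefore concentrates on the polynomial remainder $\psi := I_\tau^{k+1} u - R_\tau^k u$.

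Since $I_\tau^{k+1} u \in \mathbb{P}_{k+1}(I_n;B)$ and $R_\tau^k u \in \mathbb{P}_k(I_n;B)$, one has $\psi \in \mathbb{P}_{k+1}(I_n;B)$, and the defining conditions of $R_\tau^k$ force $\psi(t_{n-1}) = 0$ together with the vanishing of $\partial_t \psi$ at the prescribed interior nodes. Because $\partial_t(R_\tau^k u) \in \mathbb{P}_{k-1}$, the leading coefficient of $\partial_t\psi \in \mathbb{P}_k$ coincides with that of $\partial_t(I_\tau^{k+1} u)$, so $\partial_t\psi$ is, up to a scalar factor in $B$, the nodal polynomial associated with those nodes. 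Integration from $t_{n-1}$ combined with a standard divided-difference bound for the leading coefficient of $\partial_t(I_\tau^{k+1} u)$ in terms of $\|u\|_{C^{k+1}(\bar I_n;B)}$ then yields $\|\psi\|_{C^0(\bar I_n;B)} \leq C\tau_n^{k+1}\|u\|_{C^{k+1}(\bar I_n;B)}$, and combining with the first bound finishes the proof.

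A cleaner variant, which I would actually adopt in the write-up, is Bramble--Hilbert-style: the operator $R_\tau^k$ reproduces polynomials of degree at most $k$, since for $p \in \mathbb{P}_k(I_n;B)$ one has $I_\tau^{k+1} p = p$ and the defining conditions for $R_\tau^k p$ are then met by $p$ itself. Writing $u - R_\tau^k u = (I - R_\tau^k)(u - q)$ for any $q \in \mathbb{P}_k(I_n;B)$, applying a uniform stability bound for $R_\tau^k$ on the reference interval, and taking $q$ to be the Taylor polynomial of $u$ of degree $k$ at $t_{n-1}$, followed by scaling back to $I_n$, gives the claim directly. The step I expect to be the main obstacle is precisely this uniform stability on the reference interval: since the definition of $R_\tau^k u$ accesses $u$ only through $I_\tau^{k+1} u$, the estimate must be traced through that composition and through the mixed value/derivative data prescribed at the interior nodes, which requires a careful change-of-basis argument on a fixed reference interval before the scaling back to $I_n$.
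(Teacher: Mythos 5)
The paper does not actually prove this lemma: it states that the proofs of the properties of $R_\tau^k$ ``can be found in'' the cited references on the Galerkin--collocation wave-equation schemes, so your argument is measured against a citation rather than against an in-text proof. Both of your routes are sound and self-contained, which is a genuine gain. The first step is right: since $I_\tau^{k+1}$ reproduces $\mathbb{P}_{k+1}\supset\mathbb{P}_k$, a Deny--Lions/Taylor argument with only $C^{k+1}$ regularity yields $\norm{u-I_\tau^{k+1}u}_{C^0(\closure{I}_n;B)}\le C\tau_n^{k+1}\norm{u}_{C^{k+1}(\closure{I}_n;B)}$, and your nodal-polynomial treatment of $\psi=I_\tau^{k+1}u-R_\tau^k u$ is the efficient way to finish: $\psi(t_{n-1})=0$, $\partial_t\psi\in\mathbb{P}_k$ vanishes at the $k$ derivative nodes and has the same leading coefficient as $\partial_t I_\tau^{k+1}u$, which the Hermite--Genocchi integral form of the generalized divided difference (needed here because $B$-valued functions admit no mean-value form) bounds by $C\norm{u}_{C^{k+1}(\closure{I}_n;B)}$; integrating from $t_{n-1}$ gives the extra power of $\tau_n$. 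Two points deserve explicit care in a write-up. First, the paper's definition of $R_\tau^k$ as stated lists one value condition plus derivative conditions for $s=0,\ldots,k$, i.e.\ $k+2$ constraints on the $(k+1)$-dimensional space $\mathbb{P}_k(I_n;B)$ (and writes $\partial_t R_\tau^{k+1}u$ where $\partial_t R_\tau^k u$ is meant); your argument implicitly uses the corrected count of exactly $k$ distinct derivative nodes, without which $\partial_t\psi$ would not be a scalar multiple of the nodal polynomial -- this correction should be stated, not assumed. Second, your preferred Bramble--Hilbert variant is equivalent but shifts all the work into the uniform $C^1$-stability of $\hat R$ on the reference interval, which, as you note, must be traced through the composition with $I_\tau^{k+1}$; the nodal-polynomial route avoids that bookkeeping entirely and is the one I would keep.
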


A direct consequence of Lemma~\ref{lemma_R_tauk} is given in the following Corolllary.

\begin{corollary}
    For $ n = 1, \ldots, N $ and $ u \in C^{k+1}(\closure{I}_n; B) $ the estimate 
    \begin{align}\label{estimate_R_tauk}
        \norm{\partial_t u - \partial_t R_{\tau}^k u}_{C^0(\closure{I}_n; B)} \leq C \tau_n^k \norm{u}_{C^{k+1}(\closure{I}_n; B)}
    \end{align}
    is fulfilled.
\end{corollary}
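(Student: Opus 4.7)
The plan is to obtain the derivative estimate from the function-value estimate of Lemma~\ref{lemma_R_tauk} by means of a classical inverse (Markov) inequality for polynomials in $\mathbb{P}_k(I_n; B)$, supplemented by a Taylor-type auxiliary approximation to handle the non-polynomial part.

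First I would introduce an auxiliary polynomial $q \in \mathbb{P}_k(I_n; B)$ with optimal simultaneous approximation of $u$ and $\partial_t u$. A concrete choice is the degree-$k$ Taylor polynomial of $u$ around a fixed point $t^\ast \in \bar{I}_n$. For $u \in C^{k+1}(\bar{I}_n; B)$ this yields the standard Taylor bounds
\begin{equation*}
\norm{u - q}_{C^0(\bar{I}_n; B)} \leq C \tau_n^{k+1} \norm{u}_{C^{k+1}(\bar{I}_n; B)}, \qquad
\norm{\partial_t u - \partial_t q}_{C^0(\bar{I}_n; B)} \leq C \tau_n^{k} \norm{u}_{C^{k+1}(\bar{I}_n; B)}.
\end{equation*}

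Next I would split
\begin{equation*}
\partial_t u - \partial_t R_{\tau}^k u = \bigl(\partial_t u - \partial_t q\bigr) + \partial_t\bigl(q - R_{\tau}^k u\bigr).
\end{equation*}
The first summand is already of the desired order by the Taylor bound above. For the second summand I exploit that $q - R_{\tau}^k u \in \mathbb{P}_k(I_n; B)$: the Markov inequality (applied componentwise via Hahn--Banach, or directly to $B$-valued polynomials) gives
\begin{equation*}
\norm{\partial_t\bigl(q - R_{\tau}^k u\bigr)}_{C^0(\bar{I}_n; B)} \leq C\, \tau_n^{-1}\, \norm{q - R_{\tau}^k u}_{C^0(\bar{I}_n; B)}.
\end{equation*}
A triangle inequality then splits the right-hand side as $\norm{q - u}_{C^0(\bar{I}_n; B)} + \norm{u - R_{\tau}^k u}_{C^0(\bar{I}_n; B)}$, which are $O(\tau_n^{k+1} \norm{u}_{C^{k+1}(\bar{I}_n; B)})$ by the Taylor estimate and by Lemma~\ref{lemma_R_tauk}, respectively. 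The factor $\tau_n^{-1}$ from Markov absorbs one power, leaving $O(\tau_n^{k} \norm{u}_{C^{k+1}(\bar{I}_n; B)})$.

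There is no real obstacle here; the Corollary is essentially the statement that the $C^0$-bound on a polynomial approximant implies, via a single inverse estimate in time, a derivative bound that loses one power of $\tau_n$. The only point requiring minor care is the validity of Markov's inequality in the Banach-space-valued setting, which follows from the scalar Markov inequality by testing with elements of the dual $B'$. Combining the two estimates produces the claimed bound.
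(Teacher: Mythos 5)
Your proof is correct and is essentially the argument the paper has in mind: the Corollary is presented only as a ``direct consequence'' of Lemma~\ref{lemma_R_tauk} (with details deferred to the cited references), and the standard way to realize this is exactly your combination of a simultaneous Taylor approximant, the triangle inequality, and a Markov-type inverse estimate in time that trades the $C^0$ bound of order $\tau_n^{k+1}$ for a derivative bound of order $\tau_n^k$. The extension of Markov's inequality to $B$-valued polynomials via dual functionals is handled appropriately, so no gaps remain.
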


Next we consider the global Hermite interpolation operator defined in~\eqref{global_Hermite}.

\begin{lemma}
For $I_{\tau}^H:C^1(\bar{I};H)\to X_{\tau}^k(H)$ the following estimates 
\begin{align*}
\Vert u - I_{\tau}^H u \Vert_{C^0 (\bar{I}_n;B)} & \le  C \tau_n^{k+1} \Vert u\Vert_{C^{k+1}(\bar{I}_n;B)}\\
\Vert \partial_t u -\partial_t I_{\tau}^H u \Vert_{C^0 (\bar{I}_n;B)}  & \le  C \tau_n^{k} \Vert u\Vert_{C^{k+1}(\bar{I}_n;B)}
\end{align*}
hold true for all $n=1,\ldots, N$ and all $u\in C^{k+1}(\bar{I}_n;B)$.
\end{lemma}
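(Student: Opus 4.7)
The plan is to reduce everything to a reference interval via an affine transformation, establish the estimates there by a Bramble--Hilbert type argument, and then scale back. This is the same template as for $R_\tau^k$ in the preceding Corollary, so the argument should be completely routine; I only need to verify that the specific choice of interpolation nodes (derivatives at the two endpoints of $I_n$ and function values at the $k-1$ interior Hermite points) makes the local problem $I_n^H:\, C^1(\bar I_n;B)\to\mathbb{P}_k(\bar I_n;B)$ unisolvent. Since those conditions amount to $k+1$ linear conditions on a space of dimension $k+1$ and the construction is exactly the standard Hermite quadrature/interpolation pairing used in \cite{JB09,Anselmann2020Galerkin}, unisolvence follows from the construction of $Q_n^H$.

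First I would introduce the affine map $\Phi_n:[0,1]\to \bar I_n$, $\Phi_n(\hat t)=t_{n-1}+\tau_n\hat t$, and set $\hat u(\hat t):=u(\Phi_n(\hat t))$. Let $\hat I^H$ denote the reference Hermite interpolation operator associated to the nodes obtained from $\{t_{n-1},t_n,t_{n,s}^H\}$ via $\Phi_n^{-1}$; note that these nodes are independent of $n$. Because Hermite interpolation at a fixed set of nodes is affine covariant (the conditions it enforces involve only point values and first derivatives, which scale covariantly under $\Phi_n$), one has the commutation
\begin{equation*}
\bigl(I_n^H u\bigr)\circ \Phi_n \;=\; \hat I^H \hat u .
\end{equation*}

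Next, on the fixed reference interval $[0,1]$, $\hat I^H$ preserves polynomials of degree $\le k$, so the linear and continuous error operator $\hat u\mapsto \hat u-\hat I^H \hat u$ vanishes on $\mathbb{P}_k([0,1];B)$. By the $B$-valued Bramble--Hilbert lemma applied in $C^0([0,1];B)$ (using that $C^{k+1}([0,1];B)\hookrightarrow C^0([0,1];B)$ continuously and that the operator is bounded on $C^{k+1}([0,1];B)$), one obtains
\begin{equation*}
\bigl\|\hat u-\hat I^H\hat u\bigr\|_{C^0([0,1];B)} \;\le\; C\,\bigl\|\partial_{\hat t}^{\,k+1}\hat u\bigr\|_{C^0([0,1];B)} .
\end{equation*}
The same argument applied after differentiating (note that $\hat I^H$ also preserves $\mathbb{P}_k$, so $\partial_{\hat t}(\hat u-\hat I^H\hat u)$ vanishes on $\mathbb{P}_{k-1}$, but in fact I only need the bound in $C^0$ and the $\tau_n^{-1}$ factor coming from the chain rule below) yields the same right-hand side.

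Finally I would scale back. Using $\partial_{\hat t}^j\hat u = \tau_n^{\,j}\,(\partial_t^j u)\circ\Phi_n$, together with the commutation property above, I get
\begin{equation*}
\|u-I_n^H u\|_{C^0(\bar I_n;B)} \;=\; \bigl\|\hat u-\hat I^H\hat u\bigr\|_{C^0([0,1];B)} \;\le\; C\,\tau_n^{\,k+1}\,\|u\|_{C^{k+1}(\bar I_n;B)},
\end{equation*}
which is the first bound. For the second, $\partial_t(u-I_n^H u)\circ\Phi_n = \tau_n^{-1}\partial_{\hat t}(\hat u-\hat I^H\hat u)$, so the reference bound on the derivative contributes an extra $\tau_n^{-1}$ and gives $C\tau_n^{\,k}\|u\|_{C^{k+1}(\bar I_n;B)}$. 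Passing from the local to the global operator via \eqref{global_Hermite} is immediate since both norms are evaluated interval by interval. The one slightly non-routine point is the affine-covariance/commutation step, which is really the whole content of the argument; once that is in place the estimate reduces to a one-line Bramble--Hilbert bound on $[0,1]$ plus scaling.
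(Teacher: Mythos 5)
The paper offers no proof of this lemma at all -- it is stated as a known auxiliary fact in the style of the neighbouring results cited from the earlier Galerkin--collocation papers -- so your write-up is supplying an argument rather than reproducing one. The route you take (affine transport to $[0,1]$, polynomial reproduction plus a Bramble--Hilbert bound on the reference interval, scaling back with one factor $\tau_n^{-1}$ per derivative) is the standard proof and is correct. Two points deserve tightening. First, you misdescribe the interpolation data: according to the appendix (see the explicit interpolant \eqref{eq:interpolator} for $k=3$), $I_n^H$ uses \emph{values and} first derivatives at both endpoints $t_{n-1},t_n$ (plus interior values when $k>3$), not ``derivatives at the two endpoints and function values at the $k-1$ interior Hermite points''. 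The distinction matters for your unisolvence remark: the actual data set is a genuine Hermite set (consecutive derivatives starting from order $0$ at each node), for which unisolvence on $\mathbb{P}_k$ is classical and automatic, whereas the set you describe is a lacunary Birkhoff-type problem whose solvability does \emph{not} ``follow from the construction of $Q_n^H$'' -- a quadrature formula using certain data says nothing about unisolvence of interpolation from that data. Second, since $B$ is an infinite-dimensional Banach space, the compactness-based Deny--Lions form of the Bramble--Hilbert lemma is unavailable; you should invoke the constructive version, i.e.\ subtract the degree-$k$ Taylor polynomial of $\hat u$ at $\hat t=0$ and bound the remainder by $\Vert \partial_{\hat t}^{k+1}\hat u\Vert_{C^0([0,1];B)}$ using the integral form of Taylor's theorem, which is valid for Bochner-valued $C^{k+1}$ functions. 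With these repairs the argument is complete; the affine-covariance identity, the scaling $\partial_{\hat t}^{j}\hat u=\tau_n^{j}(\partial_t^{j}u)\circ\Phi_n$, and the passage from the local to the global operator via \eqref{global_Hermite} are all handled correctly.
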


Another important result for our analysis that has been proved in~\cite{Bause2020post-processed} is 
presented as follows.

\begin{lemma}\label{Gauss_quadrature}
Let us consider the Gauss quadrature formula~\eqref{Gauss}. For all polynomials $p\in \mathbb{P}_{k-1}(I_n;B)$ 
and all $n=1,\ldots,N$ it is fulfilled that
\begin{align*}
\Pi_{\tau}^{k-2}p(t_{n,s}^G)=p(t_{n,s}^G),\quad s=1,\ldots,k-1.
\end{align*}
\end{lemma}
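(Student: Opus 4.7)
The plan is to exploit the defining orthogonality of the $L^2$-projection $\Pi_\tau^{k-2}$ together with the classical characterization of the Gauss quadrature nodes as the roots of the Legendre polynomial of degree $k-1$ on $I_n$.

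First, I would set $d := p - \Pi_\tau^{k-2} p$. Since $p \in \mathbb{P}_{k-1}(I_n; B)$ and $\Pi_\tau^{k-2} p \in \mathbb{P}_{k-2}(I_n; B)$, the difference satisfies $d \in \mathbb{P}_{k-1}(I_n; B)$. Restricting to the single subinterval $I_n$ and using the defining property of the local $L^2$-projection, I obtain
$$\int_{I_n} \scalar{d(t)}{q(t)} \dt = 0 \qquad \forall q \in \mathbb{P}_{k-2}(I_n; B).$$
Testing against tensor-product choices $q(t) = \varphi(t)\, b$, with arbitrary $\varphi \in \mathbb{P}_{k-2}(I_n)$ and arbitrary $b \in B$, I deduce that for every fixed $b \in B$ the scalar function $t \mapsto \scalar{d(t)}{b}$ is a polynomial in $\mathbb{P}_{k-1}(I_n)$ that is $L^2(I_n)$-orthogonal to all scalar polynomials of degree at most $k-2$.

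Second, by the classical theory of orthogonal polynomials on an interval, any such scalar polynomial is a constant multiple (depending on $b$) of the Legendre polynomial of degree $k-1$ transplanted from $[-1,1]$ to $I_n$ by the affine map $t \mapsto 2(t - t_{n-1})/\tau_n - 1$. The Gauss quadrature nodes $t_{n,s}^G$ for $s = 1, \ldots, k-1$ are, by construction of the $(k-1)$-point Gauss-Legendre rule on $I_n$, precisely the images of the roots of this Legendre polynomial. Therefore $\scalar{d(t_{n,s}^G)}{b} = 0$ for every $b \in B$ and every $s = 1, \ldots, k-1$, which forces $d(t_{n,s}^G) = 0$, i.e., $\Pi_\tau^{k-2} p(t_{n,s}^G) = p(t_{n,s}^G)$.

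There is no genuine obstacle: the argument rests on the standard one-dimensional fact that the Legendre polynomial of degree $k-1$ spans the $L^2$-orthogonal complement of $\mathbb{P}_{k-2}$ in $\mathbb{P}_{k-1}$, combined with the defining property of the Gauss-Legendre nodes as its roots. The only minor subtlety is the reduction from the $B$-valued setting to the scalar one, which is handled by the tensor-product test functions $\varphi(t)\, b$.
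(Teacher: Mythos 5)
Your proof is correct. The paper itself does not prove this lemma but merely cites \cite{Bause2020post-processed} for it; your argument --- reducing to the scalar case via test functions $\varphi(t)\,b$, identifying $p-\Pi_\tau^{k-2}p$ (paired with $b$) as a multiple of the degree-$(k-1)$ Legendre polynomial on $I_n$, and using that the $(k-1)$-point Gauss nodes are its roots --- is precisely the standard proof one finds there, and the final step $\scalar{d(t_{n,s}^G)}{b}=0$ for all $b\in B$ implying $d(t_{n,s}^G)=0$ is justified since $d(t_{n,s}^G)\in B\subset H$ may itself be taken as the test element.
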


Finally, a useful norm bound, see~\cite{Bause2020post-processed,Ern2016discontinuous}, is presented.

\begin{lemma}
For any $u\in \mathbb{P}_k(I_n;H)$ the following inequality
\begin{equation*}
\int_{I_n} \Vert u \Vert^2 \dt \le C \tau_n \Vert u(t_{n-1})\Vert^2 + \tau_n^2 \int_{I_n} \Vert \partial_t u \Vert^2 \dt.
\end{equation*}
is fulfilled. 
\end{lemma}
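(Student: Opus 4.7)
The plan is to prove this inequality by a direct application of the fundamental theorem of calculus, which works because any $u \in \mathbb{P}_k(I_n; H)$ is smooth enough in time. Note that although the lemma is stated for polynomials, the argument relies only on absolute continuity in time plus square-integrability of $\partial_t u$, so the polynomial hypothesis only guarantees sufficient regularity.

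First I would write, for every $t \in \bar I_n$,
\begin{equation*}
u(t) \;=\; u(t_{n-1}) \;+\; \int_{t_{n-1}}^{t} \partial_t u(s)\,\mathrm{d}s.
\end{equation*}
Taking norms in $H$, applying the triangle inequality, squaring, and using $(a+b)^2 \le 2a^2 + 2b^2$, I would obtain
\begin{equation*}
\Vert u(t) \Vert^2 \;\le\; 2\,\Vert u(t_{n-1})\Vert^2 \;+\; 2\,\Bigl\Vert \int_{t_{n-1}}^{t} \partial_t u(s)\,\mathrm{d}s \Bigr\Vert^2 .
\end{equation*}
Next I would bound the second term by the Cauchy--Schwarz inequality in the Bochner sense: since $t - t_{n-1} \le \tau_n$,
\begin{equation*}
\Bigl\Vert \int_{t_{n-1}}^{t} \partial_t u(s)\,\mathrm{d}s \Bigr\Vert^2 \;\le\; (t-t_{n-1}) \int_{t_{n-1}}^{t} \Vert \partial_t u(s)\Vert^2 \,\mathrm{d}s \;\le\; \tau_n \int_{I_n} \Vert \partial_t u(s)\Vert^2 \,\mathrm{d}s.
\end{equation*}

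To conclude I would integrate the resulting pointwise estimate over $t \in I_n$. The first term contributes $2\tau_n \Vert u(t_{n-1})\Vert^2$ since it is independent of $t$, while the second contributes $2\tau_n \cdot \tau_n \int_{I_n} \Vert \partial_t u\Vert^2 \,\mathrm{d}t = 2\tau_n^2 \int_{I_n} \Vert \partial_t u\Vert^2 \,\mathrm{d}t$. Absorbing the constant $2$ into $C$ then yields the claim. There is no real obstacle: the estimate is essentially a Poincaré-type bound on an interval of length $\tau_n$ using the boundary value at $t_{n-1}$, and the worst one has to watch is keeping the powers of $\tau_n$ straight on the two terms when integrating.
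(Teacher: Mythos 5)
Your proof is correct. The paper itself does not prove this lemma (it defers to the cited references), and your argument --- Taylor/fundamental theorem of calculus from $t_{n-1}$, triangle inequality, $(a+b)^2\le 2a^2+2b^2$, Cauchy--Schwarz in time, then integration over $I_n$ --- is exactly the standard derivation of this bound; you are also right that the polynomial hypothesis is only used to guarantee absolute continuity in time. The one pedantic caveat: your estimate produces $2\tau_n^2\int_{I_n}\Vert\partial_t u\Vert^2\,\mathrm{d}t$ on the second term, whereas the statement as printed places the generic constant $C$ only on the first term; since $(a+b)^2\le C a^2+b^2$ is false in general, the literal form cannot come out of this splitting, and the statement should be read (as it is in the cited sources) with the constant applying to both terms --- under that reading your proof is complete.
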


\section{Error estimates}
\label{sec:error_estimates}

Our ultimate goal in this section is to prove estimates for the error
\begin{equation*}
    E(t) = U(t) - U_{\tau, h}(t)=(e^0(t),e^1(t)), 
\end{equation*}
where the Galerkin--collocation approximation $U_{\tau,h}$ is the solution of Problem~\ref{cgp-c1_problem_2} 
and $U(t)=(u^0(t),u^1(t))$. To achieve this, we start with estimations for $ \partial_t E $ and afterwards we estimate the error $E$. Note that $ E $ is continuously differentiable in time if $ U \in (C^1(\closure{I}; V))^2 $ holds for the exact solution.

\subsection{Error estimates for \texorpdfstring{$ \partial_t U_{\tau, h} $}{partial\_t U\_\{tau, h\}}}

First, we derive estimates for $ \partial_t U_{\tau, h} $, which will be used later.
\begin{theorem}
    Let $ U_{\tau, h} \in (X_{\tau}^k(V_h))^2 $ be the discrete solution of Problem~\ref{problem_cgp}. Then, the time derivative $ \partial_t U_{\tau, h} \in (X_{\tau}^{k-1}(V_h))^2 $ solves for $ n = 1, \ldots, N $ 
    \begin{align*}
        B_n^{GL}(\partial_t U_{\tau, h}, V_{\tau, h}) = Q_n^{GL}(\scalar{\partial_t I_{\tau}^H F}{V_{\tau, h}}) = \int_{I_n} \scalar{\partial_t I_{\tau}^H F}{V_{\tau, h}} \dt
    \end{align*}
    for all $ V_{\tau, h} \in (\mathbb{P}_{k-2}(I_n; V_h))^2 $.
\end{theorem}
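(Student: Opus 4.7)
The plan is to start from Proposition~\ref{prop1}, upgrade its quadrature identity to a true $L^2$-identity on $I_n$ via the exactness of Gauss--Lobatto, then differentiate in time and integrate by parts. Concretely, introduce the residual
\begin{equation*}
r(t) := \partial_t U_{\tau,h}(t) + \mathcal{L}_h U_{\tau,h}(t) - I_{\tau}^H F(t),
\end{equation*}
which is a polynomial of time-degree at most $k$ (with values in $V_h^2$ plus a piece coming from the Hermite interpolant of $F$). Proposition~\ref{prop1} states exactly that $Q_n^{GL}(\scalar{r}{V_{\tau,h}}) = 0$ for all $V_{\tau,h} \in (\mathbb{P}_{k-2}(I_n;V_h))^2$. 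Since the Gauss--Lobatto formula with $k+1$ nodes is exact on $\mathbb{P}_{2k-1}$ and the integrand lies in $\mathbb{P}_{2k-2}$, this passes to the $L^2(I_n)$-identity
\begin{equation*}
\int_{I_n} \scalar{r}{V_{\tau,h}} \dt = 0 \qquad \forall V_{\tau,h} \in (\mathbb{P}_{k-2}(I_n;V_h))^2.
\end{equation*}

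Next, I would integrate by parts in time to bring $\partial_t r$ to the left:
\begin{equation*}
\int_{I_n} \scalar{\partial_t r}{V_{\tau,h}} \dt = \scalar{r(t_n^-)}{V_{\tau,h}(t_n^-)} - \scalar{r(t_{n-1}^+)}{V_{\tau,h}(t_{n-1}^+)} - \int_{I_n} \scalar{r}{\partial_t V_{\tau,h}} \dt.
\end{equation*}
The last integral vanishes because $\partial_t V_{\tau,h} \in (\mathbb{P}_{k-3}(I_n;V_h))^2 \subset (\mathbb{P}_{k-2}(I_n;V_h))^2$. The main point to verify is that the two boundary terms also vanish. Using the collocation identities \eqref{cgp-c1_problem_b} and \eqref{cgp-c1_problem_c}, together with $F=(0,f)$ (so $\mathcal{P}_h F = (0, P_h f)$) and the endpoint-interpolation property of the Hermite operator, namely $I_\tau^H F(t_{n-1}^+) = F(t_{n-1}^+)$ and $I_\tau^H F(t_n^-) = F(t_n^-)$, one gets
\begin{equation*}
r(t_{n-1}^+) = \mathcal{P}_h F(t_{n-1}^+) - F(t_{n-1}^+) \in \{0\}\times V_h^\perp, \qquad r(t_n^-) = \mathcal{P}_h F(t_n^-) - F(t_n^-) \in \{0\}\times V_h^\perp,
\end{equation*}
and since $V_{\tau,h}(t_{n-1}^+), V_{\tau,h}(t_n^-) \in V_h \times V_h$, the $L^2$-orthogonality of $P_h f - f$ to $V_h$ kills both boundary contributions. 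This step (the verification that the endpoint residual is $L^2$-orthogonal to the test space) is what I expect to be the technical heart of the argument, because it is the only place where the very definition of the Galerkin--collocation scheme and the properties of $I_\tau^H$ are used in an essential way.

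Collecting the above, $\int_{I_n} \scalar{\partial_t r}{V_{\tau,h}} \dt = 0$ for all $V_{\tau,h}\in (\mathbb{P}_{k-2}(I_n;V_h))^2$. Finally I would pass back from the $L^2$-integral to Gauss--Lobatto: $\partial_t r \in \mathbb{P}_{k-1}$ and $V_{\tau,h}\in \mathbb{P}_{k-2}$, so the integrand is in $\mathbb{P}_{2k-3}\subset \mathbb{P}_{2k-1}$ and Gauss--Lobatto is exact, giving $Q_n^{GL}(\scalar{\partial_t r}{V_{\tau,h}}) = 0$. Expanding $\partial_t r = \partial_t^2 U_{\tau,h} + \mathcal{L}_h \partial_t U_{\tau,h} - \partial_t I_\tau^H F$ and recognizing the first two terms as $B_n^{GL}(\partial_t U_{\tau,h}, V_{\tau,h})$ yields
\begin{equation*}
B_n^{GL}(\partial_t U_{\tau,h}, V_{\tau,h}) = Q_n^{GL}(\scalar{\partial_t I_\tau^H F}{V_{\tau,h}}).
\end{equation*}
The last equality to the integral $\int_{I_n}\scalar{\partial_t I_\tau^H F}{V_{\tau,h}}\dt$ follows once more from exactness of Gauss--Lobatto on $\mathbb{P}_{2k-3}$, completing the proof.
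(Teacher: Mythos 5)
Your overall strategy is the right one and is essentially the argument behind \cite[Thm.~5.1]{Anselmann2020Galerkin} that the paper invokes: use Proposition~\ref{prop1}, integrate by parts in time, kill the boundary terms via the collocation conditions \eqref{cgp-c1_problem_b}--\eqref{cgp-c1_problem_c} together with the endpoint-interpolation property of $I_\tau^H$ and the $L^2(\Omega)$-orthogonality of $P_hf-f$ to $V_h$, and use exactness of the quadrature on $\mathbb{P}_{2k-3}$ to move between $Q_n^{GL}$ and $\int_{I_n}$. Your treatment of the boundary terms, which is indeed the heart of the matter, is correct.

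There is, however, one step that does not survive scrutiny as written. The rule $Q_n^{GL}$ in \eqref{Gauss-Lobatto} is the \emph{$k$-point} Gauss--Lobatto formula (the sum runs over $s=1,\ldots,k$; consistently, the stability lemma uses that $I_\tau^{GL}$ maps into $\mathbb{P}_{k-1}$), so it is exact only on $\mathbb{P}_{2k-3}(I_n;\R)$, not on $\mathbb{P}_{2k-1}$. Since $r\in\mathbb{P}_k$ and $V_{\tau,h}\in\mathbb{P}_{k-2}$, the integrand $\scalar{r}{V_{\tau,h}}$ lies in $\mathbb{P}_{2k-2}$, which exceeds the degree of exactness; hence your intermediate claim
$\int_{I_n}\scalar{r}{V_{\tau,h}}\dt=0$ for \emph{all} $V_{\tau,h}\in(\mathbb{P}_{k-2}(I_n;V_h))^2$ does not follow from $Q_n^{GL}(\scalar{r}{V_{\tau,h}})=0$, and it is in general false (the quadrature error is a nonzero multiple of the leading coefficient of $\scalar{r}{V_{\tau,h}}$). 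The gap is easily repaired, because the only instance you actually use is the test function $\partial_t V_{\tau,h}\in(\mathbb{P}_{k-3}(I_n;V_h))^2$, for which $\scalar{r}{\partial_t V_{\tau,h}}\in\mathbb{P}_{2k-3}$ and exactness does apply. Cleanest is to integrate by parts at the quadrature level: since $\tfrac{\mathrm{d}}{\mathrm{d}t}\scalar{r}{V_{\tau,h}}\in\mathbb{P}_{2k-3}$ is integrated exactly by $Q_n^{GL}$, one has
$Q_n^{GL}(\scalar{\partial_t r}{V_{\tau,h}})=\scalar{r(t_n^-)}{V_{\tau,h}(t_n^-)}-\scalar{r(t_{n-1}^+)}{V_{\tau,h}(t_{n-1}^+)}-Q_n^{GL}(\scalar{r}{\partial_t V_{\tau,h}})$,
after which your boundary-term argument and Proposition~\ref{prop1} (applied with $\partial_t V_{\tau,h}$) give $Q_n^{GL}(\scalar{\partial_t r}{V_{\tau,h}})=0$ directly; the final identification of $Q_n^{GL}(\scalar{\partial_t I_\tau^H F}{V_{\tau,h}})$ with the integral is fine since that integrand is in $\mathbb{P}_{2k-3}$.
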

\begin{proof}
    The proof differs from proof \cite[Theorem~5.1]{Anselmann2020Galerkin} only in the definition of the operator $ \mathcal{L}_h $.
\end{proof}

\begin{lemma}
    Let $ U_{0, h} = (R_h u_0, R_h u_1) $. Then the identity
    \begin{align*}
        \partial_t U_{\tau, h}(0) =
        \begin{pmatrix}
        R_h & 0 \\ 
        0 & P_h
        \end{pmatrix} 
        \partial_t U(0).
    \end{align*}
    holds.
\end{lemma}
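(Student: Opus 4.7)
The plan is to start from the defining relation of $\partial_t U_{\tau,h}(0)$ given in the discussion just below Problem~\ref{problem_cgp}, namely
$$
\partial_t U_{\tau,h}(0) \;=\; -\mathcal{L}_h U_{0,h} + \mathcal{P}_h F(0),
$$
substitute $U_{0,h}=(R_h u_0, R_h u_1)$ and $F(0)=(0,f(0))$, and then check the two components separately. Writing it out,
$$
\partial_t U_{\tau,h}(0)
= \begin{pmatrix} 0 & P_h \\ -A_h & 0 \end{pmatrix}\begin{pmatrix} R_h u_0 \\ R_h u_1 \end{pmatrix}
+ \begin{pmatrix} 0 \\ P_h f(0) \end{pmatrix}
= \begin{pmatrix} P_h R_h u_1 \\ -A_h R_h u_0 + P_h f(0) \end{pmatrix}.
$$
The target vector is $\bigl(R_h u_1,\, P_h \partial_{tt} u(0)\bigr)^\top$, where $\partial_{tt} u(0) = f(0) - \Delta^2 u_0$ follows from evaluating \eqref{eq:instationary_plate_a} at $t=0$.

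For the first component, since $R_h u_1 \in V_h$ and $P_h$ acts as the identity on $V_h$, one gets $P_h R_h u_1 = R_h u_1 = R_h \partial_t u(0)$ directly.

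For the second component, the key identity to establish is
$$
A_h R_h u_0 \;=\; P_h A u_0,
$$
i.e.\ that the elliptic projection $R_h$ commutes with $A$ modulo the $L^2$-projection. This will be verified testing with arbitrary $v_h \in V_h$ and chaining the definitions: by \eqref{A_h} one has $(A_h R_h u_0, v_h) = (\Delta R_h u_0, \Delta v_h)$; by definition of $R_h$ in \eqref{R_h} this equals $(\Delta u_0, \Delta v_h)$; finally, since $u_0 \in V\cap H^4(\Omega)$, the consistency argument already displayed in Section~\ref{sec:discretizations} (the identity $A_h w = P_h A w$ for $w\in V\cap H^4(\Omega)$) yields $(\Delta u_0, \Delta v_h) = \langle A u_0, v_h\rangle = (P_h A u_0, v_h)$. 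Combining, $-A_h R_h u_0 + P_h f(0) = P_h(f(0) - \Delta^2 u_0) = P_h \partial_{tt} u(0)$, which matches the second component of the target.

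The two components together give the stated identity. The main obstacle, if one can call it that, is simply checking the commutation $A_h R_h = P_h A$ on sufficiently regular functions; this requires tacitly assuming $u_0 \in V\cap H^4(\Omega)$ so that the right-hand side $P_h A u_0$ makes sense in the pointwise form $P_h \Delta^2 u_0$. Otherwise the calculation is purely algebraic, relying only on the block structure of $\mathcal{L}_h$, the definitions of $P_h$ and $R_h$, and the PDE at $t=0$.
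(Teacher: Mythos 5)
Your proposal is correct and follows essentially the same route as the paper's proof: expand $\partial_t U_{\tau,h}(0) = -\mathcal{L}_h U_{0,h} + \mathcal{P}_h F(0)$ componentwise, verify $P_h R_h u_1 = R_h u_1$ and $A_h R_h u_0 = P_h A u_0$ by chaining the definitions \eqref{A_h}, \eqref{R_h} and the $L^2$-projection, and match against $\partial_t U(0) = -\mathcal{L}U(0) + F(0)$. The only cosmetic difference is that you phrase the target via the strong form $\partial_{tt}u(0) = f(0) - \Delta^2 u_0$ rather than the abstract evolution equation, which is the same statement under the regularity you correctly flag.
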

\begin{proof}
    From $ U_{\tau, h}(0) = U_{0, h} = (R_h u_0, R_h u_1) $ together with \eqref{cgp-c1_problem_a} and \eqref{cgp-c1_problem_b} for $ n = 1 $ it follows
    \begin{align*}
        \partial_t U_{\tau, h}(0) & = - \mathcal{L}_h U_{\tau, h}(0) + \mathcal{P}_h F(0) \\
        & =
        - \begin{pmatrix}
        0 & -P_h \\ 
        A_h & 0
        \end{pmatrix} 
        \begin{pmatrix}
        R_h u_0 \\ 
        R_h u_1
        \end{pmatrix} 
        + \begin{pmatrix}
        0 \\ 
        P_h f(0)
        \end{pmatrix} 
        = \begin{pmatrix}
        P_h R_h u_1 \\ 
        - A_h R_h u_0 + P_h f(0)
        \end{pmatrix}.
    \end{align*}
    Using the definition of the operator $ R_h $, \eqref{R_h}, we obtain
    \begin{align*}
        \scalar{A_h R_h u_0}{v_h} = \scalar{\laplace R_h u_0}{\laplace v_h} = \scalar{\laplace u_0}{\laplace v_h} = \dual{A u_0}{v_h} = \scalar{P_h A u_0}{v_h}
    \end{align*}
    for all $ v_h \in V_h $. Thus, $ A_h R_h u_0 = P_h A u_0 $. In addition, we have
    \begin{align*}
    \scalar{P_h R_h u_1}{v_h} = \scalar{R_h u_1}{ v_h}
    \end{align*}
    for all $ v_h \in V_h $, from which we infer $ P_h R_h u_1 = R_h u_1 $. Thus, we have
    \begin{align*}
        \partial_t U_{\tau, h}(0) =
        \begin{pmatrix}
        R_h u_1 \\ 
        - P_h A u_0 + P_h f(0)
        \end{pmatrix} 
    \end{align*}
    Calculating the right-hand side gives
    \begin{align*}
        \begin{pmatrix}
        R_h & 0 \\
        0 & P_h
        \end{pmatrix}
        \partial_t U(0)
        & =
        \begin{pmatrix}
        R_h & 0 \\
        0 & P_h
        \end{pmatrix}
        (- \mathcal{L} U(0) + F(0)) \\
        & =
        \begin{pmatrix}
        R_h & 0 \\
        0 & P_h
        \end{pmatrix}
        \begin{pmatrix}
        u_1 \\
        - A u_0 + f(0)
        \end{pmatrix}
        =
        \begin{pmatrix}
        R_h u_1 \\
        - P_h A u_0 + P_h f(0)
        \end{pmatrix}.
    \end{align*}
    This proves the statement.
\end{proof}

\begin{theorem}
    Let $ \hat{u} $ be the solution of \eqref{eq:instationary_plate} with data $ (\hat{f}, \hat{u}_0, \hat{u}_1) $ instead of $ (f, u_0, u_1) $. Furthermore, let $ l \in \N $ and $ \hat{f}_\tau $ be an approximation of $ \hat{f} $ satisfying
    \begin{align*}
        \norm{\hat{f} - \hat{f}_\tau}_{C(\closure{I}_n; H)} \leq C_{\hat{f}} \tau_n^{l + 1}, \quad n = 1, \ldots, N,
    \end{align*}
    where $ C_{\hat{f}} $ is independent of $ n, N $ and $ \tau_n $. Let $ \hat{U}_{\tau, h} = (\hat{u}_{\tau, h}^0, \hat{u}_{\tau, h}^1) \in (X_{\tau}^l(V_h))^2 $ denote the solution of the local perturbed cGP($l$)-cG($r$) problem
    \begin{align*}
        \int_{I_n} \scalar{\partial_t \hat{U}_{\tau, h}}{V_{\tau, h}} + \scalar{\mathcal{L}_h \hat{U}_{\tau, h}}{V_{\tau, h}} \dt = \int_{I_n} \scalar{\hat{F}_\tau}{V_{\tau, h}} \dt
    \end{align*}
    for all test function $ V_{\tau, h} = (v_{\tau, h}^0, v_{\tau, h}^1) \in (\mathbb{P}_{l-1}(I_n; V_h))^2 $ with $ \hat{F}_{\tau} = (0, \hat{f}_{\tau}) $ and initial value $ \hat{U}_{\tau, h}(t_{n-1}^+) = \hat{U}_{\tau, h}(t_{n-1}^-) $ for $ n > 1 $ and $ \hat{U}_{\tau, h}(t_0) = \hat{U}_{0, h} = (R_h \hat{u}_0, P_h \hat{u}_1) $. Then a sufficiently smooth exact solution $ \hat{u} $ satisfies
    \begin{align}
    \norm{\hat{u}(t) - \hat{u}_{\tau, h}^0(t)} + \norm{\partial_t \hat{u}(t) - \hat{u}_{\tau, h}^1(t)} \leq C (\tau^{l + 1} C_t(\hat{u}) + h^{4} C_x(\hat{u})), \label{eq:perturbed_error1} \\
    \norm{\laplace (\hat{u}(t) - \hat{u}_{\tau, h}^0(t))} \leq C (\tau^{l + 1} C_t(\hat{u}) + h^2 C_x(\hat{u})) \label{eq:perturbed_error2}
    \end{align}
    for all $ t \in \closure{I} $ where $ C_t(\hat{u}) $ and $ C_x(\hat{u}) $ depend on various temporal and spatial derivatives of $ \hat{u} $.
\end{theorem}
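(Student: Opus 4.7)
The plan is to follow the classical error-analysis strategy for cGP-type space-time schemes applied to second-order evolution problems: introduce a composite space-time projection, split the error into a projection part and a purely discrete part, estimate the projection part by standard approximation theory, and bound the discrete part via Galerkin orthogonality combined with a discrete energy estimate and a Gronwall step.

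Concretely, I would set $\eta := \hat U - R_\tau^l \mathcal{R}_h \hat U$ and $\xi := R_\tau^l \mathcal{R}_h \hat U - \hat U_{\tau, h}$, so that $\hat U - \hat U_{\tau, h} = \eta + \xi$. Since $\mathcal{R}_h$ is time-independent it commutes with $\partial_t$, and $R_\tau^l$ preserves values and first derivatives at the nodes; together with the prescribed initial value $(R_h \hat u_0, P_h \hat u_1)$ this ensures $\xi(0) = 0$ up to a term that can be absorbed in the $\eta$-bound. For $\eta$ I would invoke~\eqref{ell_proj_err2} componentwise with $m = 0$ and $m = 2$ to extract the $h^4$ and $h^2$ spatial rates needed for~\eqref{eq:perturbed_error1} and~\eqref{eq:perturbed_error2}, while Lemma~\ref{lemma_R_tauk} and~\eqref{estimate_R_tauk} supply the $\tau^{l+1}$ and $\tau^l$ temporal rates. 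The implicit constants are collected in $C_t(\hat u)$ and $C_x(\hat u)$.

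For $\xi \in (X_\tau^l(V_h))^2$ I would subtract the perturbed scheme for $\hat U_{\tau, h}$ from the exact relation $\int_{I_n} \scalar{\partial_t \hat U + \mathcal{L}_h \hat U}{V_{\tau, h}} \dt = \int_{I_n} \scalar{\hat F}{V_{\tau, h}} \dt$, which is legitimate thanks to the consistency~\eqref{consistency} of $\mathcal{L}_h$ on sufficiently smooth functions. This yields the Galerkin orthogonality
\[
\int_{I_n} \scalar{\partial_t \xi}{V_{\tau, h}} + \scalar{\mathcal{L}_h \xi}{V_{\tau, h}} \dt = \int_{I_n} \scalar{\hat F - \hat F_\tau}{V_{\tau, h}} - \scalar{\partial_t \eta}{V_{\tau, h}} - \scalar{\mathcal{L}_h \eta}{V_{\tau, h}} \dt
\]
for all $V_{\tau, h} \in (\mathbb{P}_{l-1}(I_n; V_h))^2$. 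To extract an energy I would choose a test function of the form $V_{\tau, h} = (A_h \xi^0, \xi^1)$, or its $\Pi_n^{l-1}$-image to stay within the test space: the off-diagonal structure of $\mathcal{L}_h$ forces the cross terms to cancel, since $\scalar{P_h \xi^1}{A_h \xi^0} = \scalar{\xi^1}{A_h \xi^0} = \scalar{A_h \xi^0}{\xi^1}$, leaving $\scalar{\partial_t \xi^0}{A_h \xi^0} + \scalar{\partial_t \xi^1}{\xi^1} = \tfrac{1}{2} \partial_t \triplenorm{\xi}^2$. Integrating over $I_n$ telescopes the left-hand side to $\tfrac{1}{2}\triplenorm{\xi(t_n)}^2 - \tfrac{1}{2}\triplenorm{\xi(t_{n-1})}^2$, while the right-hand side is controlled by Cauchy--Schwarz, the $\eta$-bounds, the hypothesis $\norm{\hat F - \hat F_\tau}_{C(\closure{I}_n; H)} \leq C_{\hat f}\tau_n^{l+1}$, and the norm bound from the last lemma of Section~\ref{sec:error_analysis}.

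A discrete Gronwall inequality over the time mesh, together with $\xi(0) = 0$, then delivers the uniform bound $\triplenorm{\xi(t)}^2 \leq C\bigl(\tau^{2(l+1)} C_t(\hat u)^2 + h^4 C_x(\hat u)^2\bigr)$, and the triangle inequality combined with the $\eta$-estimates produces~\eqref{eq:perturbed_error1} and~\eqref{eq:perturbed_error2}; the split of spatial rates comes solely from the two choices of $m$ in~\eqref{ell_proj_err2} applied to $\eta^0$. The main obstacle I foresee is keeping the energy-generating test function $(A_h \xi^0, \xi^1)$ inside the reduced test space $(\mathbb{P}_{l-1})^2$ while still preserving the exact cross-term cancellation; this is the step where Lemma~\ref{Gauss_quadrature} enters, allowing one to replace the test function by a suitable Gauss-point Lagrange interpolant whose values coincide with those of the original at the collocation points, so that the cancellation inside $\scalar{\mathcal{L}_h \xi}{\cdot}$ survives the projection. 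A secondary technical point is upgrading the nodal energy bound to the pointwise-in-$t$ estimates claimed in~\eqref{eq:perturbed_error1}--\eqref{eq:perturbed_error2}, for which a standard inverse-type inequality on $\mathbb{P}_l(I_n; V_h)$ is sufficient.
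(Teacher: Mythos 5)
The paper itself gives no argument here beyond deferring to \cite[Theorem~5.5]{Bause2020post-processed}, and your overall architecture --- elliptic-projection-plus-temporal-interpolant splitting, Galerkin orthogonality for the discrete part, the energy test function $(A_h\xi^0,\xi^1)$ projected into $(\mathbb{P}_{l-1})^2$, cross-term cancellation from the skew structure of $\mathcal{L}_h$, and a discrete Gronwall step --- is exactly the standard machinery that citation rests on (and that the paper redeploys later for the Galerkin--collocation scheme). The telescoping and cancellation survive the $\Pi_n^{l-1}$ projection by the $L^2$-orthogonality of the projector, as you indicate; the appeal to Lemma~\ref{Gauss_quadrature} is superfluous for the plain cGP($l$) scheme but harmless.

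There is, however, one step that would fail as written: the treatment of the consistency term $\int_{I_n}\scalar{\partial_t\eta}{V_{\tau,h}}\,\mathrm{d}t$. If you bound it by Cauchy--Schwarz together with the derivative estimate \eqref{estimate_R_tauk}, you get $\norm{\partial_t\eta}\lesssim \tau_n^{l}+h^4$ pointwise in time, hence a contribution of order $\tau_n\cdot\tau_n^{l}$ per interval and only $O(\tau^{l})$ after summation and Gronwall --- one order short of the claimed $\tau^{l+1}$ in \eqref{eq:perturbed_error1}--\eqref{eq:perturbed_error2}. Recovering the full order is precisely why the temporal operator is constructed with interpolation/orthogonality conditions: one must either choose it so that $\partial_t\eta$ is $L^2(I_n)$-orthogonal to the test space $\mathbb{P}_{l-1}$ (e.g.\ by integrating by parts in time and using that $\eta$ vanishes at $t_{n-1}$, $t_n$ and is orthogonal to $\mathbb{P}_{l-2}$), or exploit the derivative-matching conditions of $R_\tau^{l}$ so that the pairing reduces to a term controlled by $\eta$ itself, which is $O(\tau^{l+1})$ by Lemma~\ref{lemma_R_tauk}. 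This is exactly the content of the Boundedness lemma in Section~\ref{sec:error_estimates}, whose bound carries the factor $\tau_n^{k+1}$ rather than $\tau_n^{k}$; your sketch needs the analogous statement for cGP($l$), not a raw Cauchy--Schwarz on $\partial_t\eta$. A second, minor imprecision: the $h^2$ rate in \eqref{eq:perturbed_error2} comes from the estimate for $\norm{\Delta(w-R_hw)}$ stated just below \eqref{ell_proj_err2}, not from \eqref{ell_proj_err2} with $m=2$, and the mismatch $\xi^1(0)=R_h\hat u_1-P_h\hat u_1=O(h^4)$ should be recorded explicitly rather than "absorbed".
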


\begin{proof}
The proof follows the 
lines of the proof of~\cite[Theorem~5.5]{Bause2020post-processed}.
\end{proof}

The main result in this section is 
\begin{theorem}
Let the exact solution $U=(u^0,u^1):=(u,\partial_t u)$ be sufficiently smooth and $U_{0,h}:=(R_h u_0, R_h u_1)$, then the following 
estimates hold true for all $t\in \bar{I}$
\begin{align}
\Vert \partial_t U(t)-\partial_t U_{\tau,h}(t) \Vert & \le C( \tau^k C_t(\partial_t u) + h^{4} C_x(\partial_t u)) 
\le C(\tau^k+h^{4}), \label{error_partial_t_1}
\\
\Vert \Delta (\partial_t u^0(t)-\partial_t u_{\tau,h}^0(t)) \Vert &\le C( \tau^k C_t(\partial_t u) + h^{2} C_x(\partial_t u)) 
\le C(\tau^k+h^{2}), \label{error_partial_t_2}
\end{align}
where the quantities $C_t(\partial_t u)$ and $C_x(\partial_t u)$ depend on various temporal and spatial derivatives of $\partial_t u$.
\end{theorem}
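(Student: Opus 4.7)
The plan is to recognize $\partial_t U_{\tau,h}$ itself as the discrete solution of a perturbed cGP($k-1$)-type problem for the auxiliary continuous problem whose exact solution is $\partial_t U=(\partial_t u,\partial_t^2 u)$, and then to invoke the previous theorem with $l=k-1$ and $\hat{u}=\partial_t u$.

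First, starting from the identity
\[
B_n^{GL}(\partial_t U_{\tau,h},V_{\tau,h}) = \int_{I_n}\scalar{\partial_t I_\tau^H F}{V_{\tau,h}}\dt,
\quad V_{\tau,h}\in(\mathbb{P}_{k-2}(I_n;V_h))^2,
\]
established in the first theorem of this subsection, I observe that since $\partial_t U_{\tau,h}\in(\mathbb{P}_{k-1}(I_n;V_h))^2$ and $V_{\tau,h}\in(\mathbb{P}_{k-2}(I_n;V_h))^2$, the integrands $\scalar{\partial_t(\partial_t U_{\tau,h})}{V_{\tau,h}}$ and $\scalar{\mathcal{L}_h\partial_t U_{\tau,h}}{V_{\tau,h}}$ are polynomials in time of degree at most $2k-3$. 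The Gauss--Lobatto formula with $k$ nodes is exact on $\mathbb{P}_{2k-3}$, hence $B_n^{GL}(\partial_t U_{\tau,h},V_{\tau,h})$ coincides with the exact integral. Therefore $\partial_t U_{\tau,h}$ satisfies
\[
\int_{I_n}\scalar{\partial_t(\partial_t U_{\tau,h})}{V_{\tau,h}} + \scalar{\mathcal{L}_h\partial_t U_{\tau,h}}{V_{\tau,h}} \dt
= \int_{I_n}\scalar{\hat F_\tau}{V_{\tau,h}}\dt,
\]
with $\hat F_\tau:=(0,\partial_t I_\tau^H f)$, i.e.\ exactly the perturbed cGP($l$) equation of the previous theorem for $l=k-1$.

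Next, I check the three hypotheses needed to invoke that theorem:
\begin{enumerate}
\item[(i)] The auxiliary exact problem has right-hand side $\hat f:=\partial_t f$ and solution $\hat u:=\partial_t u$, because differentiating \eqref{evolution_form} in time shows $\partial_t U$ solves the evolution problem with datum $\partial_t F=(0,\partial_t f)$ and initial data $(u_1,\partial_t^2 u(0))$.
\item[(ii)] The approximation $\hat f_\tau=\partial_t I_\tau^H f$ satisfies
\[
\|\hat f-\hat f_\tau\|_{C(\bar I_n;H)}=\|\partial_t f-\partial_t I_\tau^H f\|_{C(\bar I_n;H)}\le C\tau_n^{k}\|f\|_{C^{k+1}(\bar I_n;H)},
\]
by the Hermite interpolation estimate in Section~4, which is the required $O(\tau_n^{(k-1)+1})$ bound.
\item[(iii)] The initial value $\partial_t U_{\tau,h}(0)$ has the structure $(R_h\hat u_0,P_h\hat u_1)=(R_h\partial_t u(0),P_h\partial_t^2 u(0))$: this is precisely the conclusion of the initial-value lemma proved just above, since $\partial_t u(0)=u_1$ and $\partial_t^2 u(0)=-Au_0+f(0)$.
\end{enumerate}

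Having verified all hypotheses, the previous theorem applied with $l=k-1$ and $\hat u=\partial_t u$ yields
\[
\|\partial_t u(t)-\partial_t u^0_{\tau,h}(t)\|+\|\partial_t^2 u(t)-\partial_t u^1_{\tau,h}(t)\|\le C(\tau^{k}C_t(\partial_t u)+h^{4}C_x(\partial_t u)),
\]
and
\[
\|\Delta(\partial_t u(t)-\partial_t u^0_{\tau,h}(t))\|\le C(\tau^{k}C_t(\partial_t u)+h^{2}C_x(\partial_t u)),
\]
which are exactly \eqref{error_partial_t_1} and \eqref{error_partial_t_2}.

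The main subtlety I expect is the verification step (i)--(iii): one must ensure that the quadrature exactness really promotes the $B_n^{GL}$-equation to an honest integral equation of the required type (otherwise the previous theorem does not directly apply), and that the initial values match the form assumed there. Once these are in place, the estimate is a direct consequence of the previously established perturbed cGP error bound with $l=k-1$ (note the characteristic drop from $\tau^{k+1}$ to $\tau^k$ reflecting the loss of one order from differentiating in time).
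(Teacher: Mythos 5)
Your proposal is correct and follows essentially the same route as the paper, which (via the reference to the analogous result in the cited wave-equation work) deduces \eqref{error_partial_t_1} and \eqref{error_partial_t_2} from the perturbed cGP($l$) estimates \eqref{eq:perturbed_error1}--\eqref{eq:perturbed_error2} with $l=k-1$, $\hat u=\partial_t u$, $\hat f_\tau=\partial_t I_\tau^H f$ and the initial value supplied by the preceding lemma. Your explicit verification of the quadrature exactness on $\mathbb{P}_{2k-3}$, of the $O(\tau_n^k)$ bound for $\partial_t f-\partial_t I_\tau^H f$, and of the matching initial data fills in precisely the details the paper delegates to the reference.
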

\begin{proof}
    This proof differs from proof \cite[Theorem~5.5]{Anselmann2020Galerkin} in the definition of the operators $ A $, 
    $ \mathcal{L}_h $ and $ R_h $ as well as the finite element space $ V_h $. Analogous to 
    this proof in~\cite{Anselmann2020Galerkin}, 
    estimates \eqref{error_partial_t_1} and \eqref{error_partial_t_2} follow from estimates \eqref{eq:perturbed_error1} and \eqref{eq:perturbed_error2}.
\end{proof}

\subsection{Error estimates for \texorpdfstring{$ U_{\tau, h} $}{U\_\{tau, h\}}}

In this subsection we wish to estimate $ U_{\tau, h} $. Therefore, we use the following splitting
\begin{gather*}
E(t) = \Theta(t) + E_{\tau, h}(t) \quad \text{with} \\
\Theta(t) = U(t) - \mathcal{R}_h R_\tau^k U(t) \quad \text{and} \quad E_{\tau, h}(t) = \mathcal{R}_h R_\tau^k U(t) - U_{\tau, h}(t),
\end{gather*}
where $ E_{\tau, h}(t) = (e_{\tau, h}^0(t), e_{\tau, h}^1(t)) $.

\begin{lemma}[Estimation of the interpolation error]
    Let $ m \in \{0,1\} $. Then, the error estimates
    \begin{alignat}{2}
        \norm{\Theta(t)}_m & \leq C (h^{4 - m} + \tau_n^{k+1}), & \quad & t \in \closure{I}_n, \label{est_int_error1}\\
        \norm{\partial_t \Theta(t)}_m & \leq C (h^{4 - m} + \tau_n^k), & \quad & t \in \closure{I}_n,
         \label{est_int_error2}
    \end{alignat}
    are valid for $ n = 1, \ldots, N $ where $ \norm{\cdot}_0 = \norm{\cdot} $.
\end{lemma}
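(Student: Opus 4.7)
The plan is to split $\Theta$ into a pure time-discretization error and a pure space-discretization error and then invoke the already-established estimates for each. The natural splitting is
$$\Theta(t) = \bigl(U(t) - R_\tau^k U(t)\bigr) + (I - \mathcal{R}_h) R_\tau^k U(t),$$
where both $R_\tau^k$ and $\mathcal{R}_h$ act componentwise on $U = (u^0, u^1)$. The first summand is purely temporal (it belongs to $V \times V$ for each fixed $t$), while the second summand is a spatial elliptic projection error applied to the time-interpolant.

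For the temporal summand, Lemma \ref{lemma_R_tauk} applied componentwise with $B = H^m(\Omega)$ yields
$$\|U(t) - R_\tau^k U(t)\|_m \le C \tau_n^{k+1} \|U\|_{C^{k+1}(\bar I_n; H^m(\Omega)^2)}, \qquad t \in \bar I_n,$$
which under the assumed smoothness of $u$ gives the $\tau_n^{k+1}$ contribution in \eqref{est_int_error1} for $m \in \{0,1\}$. For the spatial summand, the elliptic projection error \eqref{ell_proj_err2} applied componentwise gives, pointwise in $t$,
$$\|(I - \mathcal{R}_h) R_\tau^k U(t)\|_m \le C h^{4-m} \bigl(\|R_\tau^k u^0(t)\|_4 + \|R_\tau^k u^1(t)\|_4\bigr).$$
Since $R_\tau^k$ is a finite-dimensional Hermite-type interpolation operator acting only in the time variable, a standard boundedness argument on the reference interval yields $\|R_\tau^k u^j(t)\|_4 \le C\|u^j\|_{C^{k+1}(\bar I_n; H^4(\Omega))}$ for $j=0,1$, so this contributes the $h^{4-m}$ term. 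The triangle inequality then gives \eqref{est_int_error1}.

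For \eqref{est_int_error2}, I repeat the decomposition after applying $\partial_t$; this is legitimate because $\mathcal{R}_h$ acts only spatially and therefore commutes with $\partial_t$:
$$\partial_t \Theta(t) = \bigl(\partial_t U(t) - \partial_t R_\tau^k U(t)\bigr) + (I - \mathcal{R}_h)\partial_t R_\tau^k U(t).$$
The temporal part is now controlled by the corollary estimate \eqref{estimate_R_tauk} applied componentwise with $B = H^m(\Omega)$, producing the factor $\tau_n^k$. The spatial part is handled exactly as before, with $R_\tau^k U$ replaced by $\partial_t R_\tau^k U$; this is again a polynomial in time whose coefficients depend linearly on finitely many values and derivatives of $U$ on $\bar I_n$, and hence is bounded in $H^4(\Omega)^2$ uniformly in $t$.

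The only technical point worth stressing is the spatial boundedness of $R_\tau^k$ and $\partial_t R_\tau^k$ from $C^{k+1}(\bar I_n; H^4(\Omega)^2)$ into $C^0(\bar I_n; H^4(\Omega)^2)$; this follows routinely from the fact that the defining conditions of $R_\tau^k$ are a finite number of point-values and first derivatives of $I_\tau^{k+1} U$ on $\bar I_n$, and the associated Lagrange-type basis on the reference interval has constants independent of $h$ and of the spatial variable. With this stability in hand, the two bounds \eqref{est_int_error1}--\eqref{est_int_error2} both follow immediately from the triangle inequality.
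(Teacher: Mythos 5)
Your proof is correct, but it uses a different intermediate quantity in the triangle inequality than the paper does. The paper splits $\Theta(t)=\bigl(U(t)-\mathcal{R}_h U(t)\bigr)+\mathcal{R}_h\bigl(U(t)-R_\tau^k U(t)\bigr)$, i.e.\ it applies the elliptic projection error to the exact solution and pushes the temporal interpolation error through $\mathcal{R}_h$; the price is that one must first establish stability of the elliptic projection in the $\norm{\cdot}_m$ norms, which is exactly what the opening remark $\norm{R_h u}\leq C\norm{\laplace R_h u}\leq C\norm{\laplace u}$ in the paper's proof supplies. You instead split off $U(t)-R_\tau^k U(t)$ first and apply $(I-\mathcal{R}_h)$ to $R_\tau^k U(t)$, so the price you pay is the $C^0(\closure{I}_n;H^4(\Omega))$-stability of $R_\tau^k$ (and of $\partial_t R_\tau^k$ for \eqref{est_int_error2}), which you correctly flag as the only technical point; note that this stability also follows with no reference-interval argument at all, simply from Lemma~\ref{lemma_R_tauk} and the corollary \eqref{estimate_R_tauk} via $\norm{R_\tau^k u(t)}_4\leq\norm{u(t)}_4+\norm{u(t)-R_\tau^k u(t)}_4$ with $B=H^4(\Omega)$. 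The two routes are interchangeable here: yours requires $H^4$-in-space regularity of $U$ uniformly on $\closure{I}_n$ inside the spatial term (which the smoothness hypothesis provides anyway), while the paper's requires only the stability constant of $\mathcal{R}_h$ but measures the temporal error in a norm strong enough for that stability to apply. Both yield \eqref{est_int_error1} and \eqref{est_int_error2} with the stated rates, and your handling of $\partial_t\Theta$ via commutation of $\partial_t$ with the purely spatial operator $\mathcal{R}_h$ matches the paper's.
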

\begin{proof}
First, we note that for the elliptic projection $R_h$ defined in~\eqref{R_h} it holds that 
$ \norm{R_h u} \leq C \norm{\laplace R_h u} \leq C \norm{\laplace u} $. 
    Then, the proof follows the same lines as the proof 
    of~\cite[Lemma~5.7]{Bause2020post-processed}.
    
 Using Lemma~\ref{lemma_R_tauk} and the approximation properties of the elliptic projection 
 $R_h$ we obtain
 \begin{align*}
        \norm{\Theta(t)}_m & = \norm{U(t) - \mathcal{R}_h R_{\tau}^{k} U(t)}_m \\
        & \leq \norm{U(t) - \mathcal{R}_h U(t)}_m + \norm{\mathcal{R}_h (U(t) - R_{\tau}^{k} U(t))}_m \\
        & \leq C h^{4 - m} \norm{U}_{C^0(\closure{I}; H^{4}(\Omega))} + \tau_n^{k+1} \norm{U}_{C^{k+1}(\closure{I}; H^1(\Omega))}
    \end{align*}
    which proves~\eqref{est_int_error1}.
  Applying estimate~ \eqref{estimate_R_tauk} and the fact that $\partial_t$ and $R_h$ commute yields 
    \begin{align*}
        \norm{\partial_t \Theta(t)}_m & \leq \norm{\partial_t U(t) - \mathcal{R}_h \partial_t U(t)}_m + \norm{\mathcal{R}_h (\partial_t U(t) - \partial_t R_{\tau}^{k+1} U(t))}_m \\
        & \leq C h^{4 - m} \norm{\partial_t U}_{C^0(\closure{I}; H^{4}(\Omega))} + C \tau_n^{k+1} \norm{U}_{C^{k+2}(\closure{I}; H^1(\Omega))}
    \end{align*}    
    which shows~\eqref{est_int_error2}.
\end{proof}

\begin{lemma}[Consistency error]
    Let $ U \in C^1(\closure{I}; V) \times C^1(\closure{I}; H) $. Then, we have
    \begin{align*}
    B_n^{GL}(E, V_{\tau, h}) = Q_n^{GL}(\scalar{I_\tau^{GL} F - I_\tau^H F}{V_{\tau, h}}) = Q_n^{GL}(\scalar{F - I_\tau^H F}{V_\tau, h})
    \end{align*}
    for all $ V_{\tau, h} \in (Y_{\tau, h}^{k-2})^2 $ and $ n = 1, \ldots, N $.
\end{lemma}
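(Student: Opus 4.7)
The strategy is to split $B_n^{GL}(E, V_{\tau,h})$ into a continuous and a discrete piece, handle the discrete one via Proposition~\ref{prop1}, and rewrite the continuous piece using the evolution equation. By linearity of $B_n^{GL}$ in its first argument,
\[
B_n^{GL}(E, V_{\tau, h}) = B_n^{GL}(U, V_{\tau, h}) - B_n^{GL}(U_{\tau, h}, V_{\tau, h}).
\]
Since $V_{\tau,h}\restr{I_n} \in (\mathbb{P}_{k-2}(I_n; V_h))^2$, Proposition~\ref{prop1} immediately yields
\[
B_n^{GL}(U_{\tau, h}, V_{\tau, h}) = Q_n^{GL}(\scalar{I_\tau^H F}{V_{\tau, h}}).
\]

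For the exact solution I unfold the definition of $B_n^{GL}$, obtaining
\[
B_n^{GL}(U, V_{\tau, h}) = Q_n^{GL}\bigl(\scalar{\partial_t U}{V_{\tau, h}} + \scalar{\mathcal{L}_h U}{V_{\tau, h}}\bigr).
\]
The crucial step is to move from the discrete operator $\mathcal{L}_h$ back to the continuous $\mathcal{L}$. The computation preceding \eqref{consistency} shows that for every $W \in V \times H$ and every $\Phi_h \in V_h \times V_h$ one has $\scalar{\mathcal{L}_h W}{\Phi_h} = \dual{\mathcal{L} W}{\Phi_h}$. Applying this identity pointwise at each Gauss--Lobatto node (where $U(t) \in V \times H$ by the regularity assumption) and then using the evolution equation $\partial_t U + \mathcal{L} U = F$ gives, at every quadrature node,
\[
\scalar{\partial_t U}{V_{\tau,h}} + \scalar{\mathcal{L}_h U}{V_{\tau,h}} = \scalar{\partial_t U}{V_{\tau,h}} + \dual{\mathcal{L} U}{V_{\tau,h}} = \scalar{F}{V_{\tau,h}}.
\]
Summing over the nodes weighted by the Gauss--Lobatto coefficients produces $B_n^{GL}(U,V_{\tau,h}) = Q_n^{GL}(\scalar{F}{V_{\tau,h}})$.

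To obtain the first claimed equality, I note that $Q_n^{GL}$ depends only on the values of its argument at the Gauss--Lobatto nodes, and $I_\tau^{GL} F$ interpolates $F$ precisely at those nodes, so $Q_n^{GL}(\scalar{F}{V_{\tau,h}}) = Q_n^{GL}(\scalar{I_\tau^{GL} F}{V_{\tau,h}})$; subtracting $Q_n^{GL}(\scalar{I_\tau^H F}{V_{\tau,h}})$ from both sides yields the first identity, and the same nodal observation applied in the reverse direction gives the second.

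The main obstacle is justifying the pointwise identity $\partial_t U(t) + \mathcal{L} U(t) = F(t)$ at the discrete Gauss--Lobatto times. A priori, $U$ satisfies \eqref{evolution_form} only in a Bochner-integrable sense, so the equation holds for almost every $t$ rather than everywhere. The hypothesis $U \in C^1(\closure{I}; V) \times C^1(\closure{I}; H)$ together with the assumed continuity of $F$ in time promotes the identity, once paired against $V_h$-valued test functions, to a genuine pointwise identity, which legitimises its evaluation at the Gauss--Lobatto nodes. Everything else is either a linearity manipulation or a direct appeal to Proposition~\ref{prop1}.
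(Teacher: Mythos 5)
Your proof is correct and follows essentially the same route as the paper, which simply defers to \cite[Lemma~5.7]{Anselmann2020Galerkin}: split $B_n^{GL}(E,\cdot)$ by linearity, eliminate the discrete part via Proposition~\ref{prop1}, use the consistency identity $\scalar{\mathcal{L}_h U}{\Phi_h}=\dual{\mathcal{L}U}{\Phi_h}$ together with the evolution equation at the Gauss--Lobatto nodes, and invoke the nodal exactness of $I_\tau^{GL}$. Your explicit remarks on why the pointwise evaluation of \eqref{evolution_form} is legitimate under the $C^1$ hypothesis, and on the fact that the duality form of \eqref{consistency} needs only $U(t)\in V\times H$ rather than $H^4$-regularity, are welcome additions but do not change the argument.
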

\begin{proof}
    Because of Proposition~\ref{prop1} and the consistency property \eqref{consistency}, the proof differs from the proof \cite[Lemma~5.7]{Anselmann2020Galerkin} only in the definition of the operators $ \mathcal{L} $ and $ \mathcal{L}_h $.
\end{proof}

For the proof of the following result we refer to \cite[Lemma~5.9]{Bause2020post-processed}.
\begin{lemma}
    \label{dtGLinterpolation}
    Let $ p \in \mathbb{P}_k(I_n) $ be an arbitrary polynomial of degree less than or equal to $ k $. Then, the identity
    \begin{align*}
    \partial_t p(t_{n, \mu}^G) = \partial_t I_{\tau}^{GL} p(t_{n, \mu}^G)
    \end{align*}
    is satisfied for all Gauss points $ t_{n, \mu}^G \in I_n, \mu = 1, \ldots, k-1 $.
\end{lemma}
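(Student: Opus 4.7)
The plan is to reduce the claim to a classical identity relating the Gauss--Lobatto nodal polynomial to the Legendre polynomial whose zeros are the Gauss nodes. First I would observe that the $k$-point Gauss--Lobatto interpolation satisfies $I_\tau^{GL} p \in \mathbb{P}_{k-1}(I_n)$, so for $p \in \mathbb{P}_k(I_n)$ the residual $r := p - I_\tau^{GL} p$ is a polynomial of degree at most $k$ which vanishes at all $k$ Gauss--Lobatto nodes $t_{n,s}^{GL}$. By a dimension count ($\dim \mathbb{P}_k(I_n) - k = 1$), there exists a constant $c$ depending on $p$ such that $r(t) = c\,\omega_n^{GL}(t)$, where $\omega_n^{GL}(t) := \prod_{s=1}^{k}(t - t_{n,s}^{GL})$ is the Gauss--Lobatto nodal polynomial on $\closure{I}_n$.

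Next I would transform to the reference interval $[-1,1]$ via the affine change of variable $t \mapsto \hat t$. On $[-1,1]$ the Gauss--Lobatto nodes consist of $\pm 1$ together with the $k-2$ interior roots of $P_{k-1}'$, where $P_{k-1}$ denotes the Legendre polynomial of degree $k-1$. Hence, up to a nonzero multiplicative constant,
\begin{equation*}
\hat\omega^{GL}(\hat t) = (\hat t^2 - 1)\,P_{k-1}'(\hat t).
\end{equation*}
Differentiating gives
\begin{equation*}
(\hat\omega^{GL})'(\hat t) = 2\hat t\,P_{k-1}'(\hat t) + (\hat t^2 - 1)\,P_{k-1}''(\hat t),
\end{equation*}
and invoking Legendre's differential equation $(1-\hat t^2)P_{k-1}''(\hat t) - 2\hat t\,P_{k-1}'(\hat t) + (k-1)k\,P_{k-1}(\hat t) = 0$ identifies this with $(k-1)k\,P_{k-1}(\hat t)$. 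Since the Gauss points $\hat t_\mu^G$ are by definition the $k-1$ roots of $P_{k-1}$, we conclude $(\hat\omega^{GL})'(\hat t_\mu^G) = 0$ for $\mu = 1,\ldots,k-1$. Pulling back to $I_n$ with the chain rule yields $\partial_t r(t_{n,\mu}^G) = c\,\partial_t \omega_n^{GL}(t_{n,\mu}^G) = 0$, which is exactly the asserted identity.

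The main obstacle is recognising the structural identity $(\omega^{GL})' \propto P_{k-1}$, without which one could only invoke Rolle's theorem to obtain $k-1$ interlaced zeros of $r'$ in the interiors of the Gauss--Lobatto subintervals, but not their precise location at the Gauss nodes. Once the Legendre ODE is applied, the remaining steps are routine: the reduction $r = c\,\omega_n^{GL}$ is purely algebraic, and the affine pullback introduces only a harmless constant Jacobian factor $2/\tau_n$ which does not affect the vanishing.
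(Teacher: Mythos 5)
Your proof is correct: the reduction of $p - I_\tau^{GL}p$ to a multiple of the Gauss--Lobatto nodal polynomial $(\hat t^2-1)P_{k-1}'(\hat t)$, followed by the Legendre ODE identity $\bigl[(\hat t^2-1)P_{k-1}'\bigr]' = k(k-1)P_{k-1}$, is exactly the standard argument, and the affine pullback is handled properly. The paper itself gives no proof here but defers to \cite[Lemma~5.9]{Bause2020post-processed}, whose proof rests on the same identity, so your argument matches the intended one.
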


\begin{lemma}[Stability]
    The following identity
    \begin{align*}
    \begin{aligned}
    & \quad B_n^{GL}((e_{\tau, h}^0, e_{\tau, h}^1), (\Pi_n^{k-2} A_h I_{\tau}^{GL} e_{\tau, h}^0, \Pi_n^{k-2} I_{\tau}^{GL} e_{\tau, h}^1)) \\
    & = \frac{1}{2}(\norm{\laplace e_{\tau, h}^0(t_n)}^2 - \norm{\laplace e_{\tau, h}^0(t_{n-1})}^2 + \norm{e_{\tau, h}^1(t_n)}^2 - \norm{e_{\tau, h}^1(t_{n-1})}^2 )
    \end{aligned}
    \end{align*}
    holds true for all $ n = 1, \ldots, N $.
\end{lemma}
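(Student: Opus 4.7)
The plan is to expand the left-hand side using the explicit form of $\mathcal{L}_h = \begin{pmatrix}0 & -P_h \\ A_h & 0\end{pmatrix}$, so that $B_n^{GL}$ splits into four quadrature contributions: two \emph{diagonal} terms involving $\partial_t e_{\tau,h}^0$ and $\partial_t e_{\tau,h}^1$, and two \emph{cross} terms coming from the off-diagonal entries of $\mathcal{L}_h$. The diagonal terms are designed to produce the energy differences on the right-hand side, while the cross terms should cancel.

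First I would handle the cross terms. Since $e_{\tau,h}^1(t)\in V_h$ pointwise in $t$, we have $P_h e_{\tau,h}^1 = e_{\tau,h}^1$. At each Gauss--Lobatto node $t_{n,s}^{GL}$, the interpolant $I_\tau^{GL}$ reproduces $e_{\tau,h}^0$ and $e_{\tau,h}^1$, so under the quadrature we may freely replace them by $I_\tau^{GL} e_{\tau,h}^0, I_\tau^{GL} e_{\tau,h}^1 \in \mathbb{P}_{k-1}(I_n;V_h)$. The resulting integrands have temporal degree $\le (k-1)+(k-2)=2k-3$, which is exactly the exactness range of the $k$-point Gauss--Lobatto rule, so both quadratures equal the corresponding integrals over $I_n$. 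Self-adjointness of $A_h$ on $V_h$ (from $\scalar{A_h u}{v}=\scalar{\Delta u}{\Delta v}$) together with the $L^2(I_n)$-self-adjointness of $\Pi_n^{k-2}$ and commutativity of $A_h$ with time operators show that the two integrals coincide up to a sign and cancel.

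For the second diagonal term $Q_n^{GL}(\scalar{\partial_t e_{\tau,h}^1}{\Pi_n^{k-2} I_\tau^{GL} e_{\tau,h}^1})$, the integrand has temporal degree $\le (k-1)+(k-2)=2k-3$, so again quadrature equals the exact integral. Using self-adjointness of $\Pi_n^{k-2}$ in time moves it onto $\partial_t e_{\tau,h}^1$; the crucial identity is then
\begin{equation*}
\Pi_n^{k-2}\partial_t p = \partial_t I_\tau^{GL} p \qquad \forall\, p\in\mathbb{P}_k(I_n;V_h),
\end{equation*}
which I would establish as follows: $\partial_t(p - I_\tau^{GL} p)\in\mathbb{P}_{k-1}$ vanishes at all $k-1$ Gauss points by Lemma~\ref{dtGLinterpolation}; for any $q\in\mathbb{P}_{k-2}$ the product $\partial_t(p - I_\tau^{GL} p)\cdot q$ has degree $\le 2k-3$ and the $(k-1)$-point Gauss rule is exact, so the integral vanishes, proving the $L^2(I_n)$-orthogonality. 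Substituting yields $\int_{I_n}\scalar{\partial_t I_\tau^{GL} e_{\tau,h}^1}{I_\tau^{GL} e_{\tau,h}^1}\,\mathrm{d}t = \tfrac12\int_{I_n}\partial_t\|I_\tau^{GL} e_{\tau,h}^1\|^2\,\mathrm{d}t$, and the fundamental theorem together with the endpoint property of Gauss--Lobatto interpolation $I_\tau^{GL} e_{\tau,h}^1(t_{n-1})=e_{\tau,h}^1(t_{n-1})$, $I_\tau^{GL} e_{\tau,h}^1(t_n)=e_{\tau,h}^1(t_n)$ produces $\tfrac12(\|e_{\tau,h}^1(t_n)\|^2 - \|e_{\tau,h}^1(t_{n-1})\|^2)$.

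The first diagonal term $Q_n^{GL}(\scalar{\partial_t e_{\tau,h}^0}{\Pi_n^{k-2} A_h I_\tau^{GL} e_{\tau,h}^0})$ is handled analogously. Self-adjointness of $A_h$ on $V_h$ rewrites the integrand as $\scalar{A_h\partial_t e_{\tau,h}^0}{\Pi_n^{k-2} I_\tau^{GL} e_{\tau,h}^0}$; commutativity of $A_h$ with $\partial_t$, $\Pi_n^{k-2}$ and $I_\tau^{GL}$ lets me apply the same key identity, this time to $A_h e_{\tau,h}^0\in\mathbb{P}_k(I_n;V_h)$, and the definition $\scalar{A_h u}{v}=\scalar{\Delta u}{\Delta v}$ converts the result to $\tfrac12\int_{I_n}\partial_t\|\Delta I_\tau^{GL} e_{\tau,h}^0\|^2\,\mathrm{d}t = \tfrac12(\|\Delta e_{\tau,h}^0(t_n)\|^2 - \|\Delta e_{\tau,h}^0(t_{n-1})\|^2)$. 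Adding the four contributions yields the claimed identity. The main obstacle is the key identity $\Pi_n^{k-2}\partial_t p = \partial_t I_\tau^{GL} p$, which hinges on a tight degree count matching Gauss exactness with the structure of the GL interpolant via Lemma~\ref{dtGLinterpolation}; the remaining work is careful bookkeeping exploiting that $A_h$, $P_h$, $\Pi_n^{k-2}$ and $I_\tau^{GL}$ act on disjoint (spatial vs.\ temporal) variables and are all self-adjoint on the appropriate subspaces.
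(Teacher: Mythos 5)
Your proof is correct and follows essentially the same route as the paper's: the same split of $B_n^{GL}$ into the time-derivative part and the $\mathcal{L}_h$ part, cancellation of the cross terms via quadrature exactness and self-adjointness, and reduction of the diagonal terms to exact time derivatives of $\tfrac12\norm{\Delta I_\tau^{GL}e_{\tau,h}^0}^2$ and $\tfrac12\norm{I_\tau^{GL}e_{\tau,h}^1}^2$. Your only deviation is cosmetic: you package Lemma~\ref{Gauss_quadrature} and Lemma~\ref{dtGLinterpolation} into the single identity $\Pi_n^{k-2}\partial_t p=\partial_t I_\tau^{GL}p$ and stay with integrals, where the paper toggles between integrals and the Gauss sum $Q_n^G$.
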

\begin{proof}
    First, we note that $ \scalar{(e_{\tau, h}^0, e_{\tau, h}^1)}{(\Pi_n^{k-2} A_h I_{\tau}^{GL} e_{\tau, h}^{0}, \Pi_n^{k-2} I_{\tau}^{GL} e_{\tau, h}^{1})} \in \mathbb{P}_{2k - 2}(I_n; \R) $.
    Using the Gauss-Lobatto interpolation operator, we obtain $ I_{\tau}^{GL} e_{\tau, h}^1 \in \mathbb{P}_{k-1}(I_n; V_h) $ and $ A_h I_{\tau}^{GL} e_{\tau, h}^0 \in \mathbb{P}_{k-1}(I_n; V_h) $.
    Then, we have
    \begin{align}
    \begin{aligned}
    \label{stabilityT}
    & \quad B_n^{GL}((e_{\tau, h}^0, e_{\tau, h}^1), (\Pi_n^{k-2} A_h I_{\tau}^{GL} e_{\tau, h}^0, \Pi_n^{k-2} I_{\tau}^{GL} e_{\tau, h}^1)) \\
    & = Q_n^{GL}(\scalar{(\partial_t e_{\tau, h}^0, \partial_t e_{\tau, h}^1)}{(\Pi_n^{k-2} A_h I_{\tau}^{GL} e_{\tau, h}^0, \Pi_n^{k-2} I_{\tau}^{GL} e_{\tau, h}^1)}) \\
    & \quad + Q_n^{GL}(\scalar{( -P_h I_{\tau}^{GL} e_{\tau, h}^1, A_h I_{\tau}^{GL} e_{\tau, h}^0)}{(\Pi_n^{k-2} A_h I_{\tau}^{GL} e_{\tau, h}^0, \Pi_n^{k-2} I_{\tau}^{GL} e_{\tau, h}^1)}) \\
    & = \int_{I_{n}} \scalar{(\partial_t e_{\tau, h}^0, \partial_t e_{\tau, h}^1)}{(\Pi_n^{k-2} A_h I_{\tau}^{GL} e_{\tau, h}^0, \Pi_n^{k-2} I_{\tau}^{GL} e_{\tau, h}^1)} \dt \\
    & \quad + \int_{I_{n}} \scalar{(-P_h I_{\tau}^{GL} e_{\tau, h}^1, A_h I_{\tau}^{GL} e_{\tau, h}^0)}{(\Pi_n^{k-2} A_h I_{\tau}^{GL} e_{\tau, h}^0, \Pi_n^{k-2} I_{\tau}^{GL} e_{\tau, h}^1)} \dt \\
    & = T_1 + T_2.
    \end{aligned}
    \end{align}
    Using Lemma~\ref{Gauss_quadrature} 
    together with the exactness of the $(k-1)$-point Gauss quadrature formula for polynomials on $ \mathbb{P}_{2k-3}(I_n; \R) $ and subsequent application of Lemma~\ref{dtGLinterpolation}, we obtain
    \begin{align*}
    \begin{aligned}
    T_1 & = \int_{I_{n}} \scalar{(\Pi_n^{k-2} \partial_t e_{\tau, h}^0, \Pi_n^{k-2} \partial_t e_{\tau, h}^1)}{(\Pi_n^{k-2} A_h I_{\tau}^{GL} e_{\tau, h}^0, \Pi_n^{k-2} I_{\tau}^{GL} e_{\tau, h}^1)} \dt \\
    & = Q_{n}^G(\scalar{(\partial_t e_{\tau, h}^0, \partial_t e_{\tau, h}^1)}{(A_h I_{\tau}^{GL} e_{\tau, h}^0, I_{\tau}^{GL} e_{\tau, h}^1)}) \\
    & = Q_{n}^G(\scalar{(\partial_t I_{\tau}^{GL} e_{\tau, h}^0, \partial_t I_{\tau}^{GL} e_{\tau, h}^1)}{(A_h I_{\tau}^{GL} e_{\tau, h}^0, I_{\tau}^{GL} e_{\tau, h}^1)}) \\
    & = \frac{\tau_n}{2} \sum_{\mu = 1}^{k-1} \hat{w}_{\mu}^{G} \scalar{\partial_t I_{\tau}^{GL} e_{\tau, h}^0(t_{n, \mu}^{G})}{A_h I_{\tau}^{GL} e_{\tau, h}^0(t_{n, \mu}^{G})} \\
    & \quad + \frac{\tau_n}{2} \sum_{\mu = 1}^{k-1} \hat{w}_{\mu}^{G} \scalar{\partial_t I_{\tau}^{GL} e_{\tau, h}^1(t_{n, \mu}^{G})}{I_{\tau}^{GL} e_{\tau, h}^1(t_{n, \mu}^{G})} \\
    & = \frac{\tau_n}{2} \sum_{\mu = 1}^{k-1} \hat{w}_{\mu}^{G} \frac{1}{2} \mathrm{d}_t \norm{A_h^{1/2} I_{\tau}^{GL} e_{\tau, h}^0(t_{n, \mu}^{G})}^2
    + \frac{\tau_n}{2} \sum_{\mu = 1}^{k-1} \hat{w}_{\mu}^{G} \frac{1}{2} \mathrm{d}_t \norm{I_{\tau}^{GL} e_{\tau, h}^1(t_{n, \mu}^{G})}^2.
    \end{aligned}
    \end{align*}
    Using the exactness of the $(k-1)$-point Gauss quadrature formula on $ \mathbb{P}_{2k-3}(I_n; \R) $ again, we obtain
    \begin{align}
        \begin{aligned}
        \label{statilityT1}
        T_1 & = \int_{I_{n}} \left( \frac{1}{2} \mathrm{d}_t \norm{A_h^{1/2} I_{\tau}^{GL} e_{\tau, h}^0(t)}^2 + \frac{1}{2} \mathrm{d}_t \norm{I_{\tau}^{GL} e_{\tau, h}^1(t)}^2 \right) \dt \\
        & = \frac{1}{2} \left( \norm{A_h^{1/2} e_{\tau, h}^0(t_n)}^2 - \norm{A_h^{1/2} e_{\tau, h}^0(t_{n-1})}^2 + \norm{ e_{\tau, h}^1(t_n)}^2 - \norm{e_{\tau, h}^1(t_{n-1})}^2 \right).
        \end{aligned}
    \end{align}
    By using Lemma~\ref{Gauss_quadrature}, we have that
    \begin{align}
        \begin{aligned}
        \label{stabilityT2}
        T_2 & = \int_{I_{n}} \scalar{(-P_h I_{\tau}^{GL} e_{\tau, h}^1, A_h I_{\tau}^{GL} e_{\tau, h}^0)}{(\Pi_n^{k-2} A_h I_{\tau}^{GL} e_{\tau, h}^0, \Pi_n^{k-2} I_{\tau}^{GL} e_{\tau, h}^1)} \dt \\
        & = \int_{I_{n}} \scalar{(-\Pi_n^{k-2} I_{\tau}^{GL} e_{\tau, h}^1, \Pi_n^{k-2} A_h I_{\tau}^{GL} e_{\tau, h}^0)}{(\Pi_n^{k-2} A_h I_{\tau}^{GL} e_{\tau, h}^0, \Pi_n^{k-2} I_{\tau}^{GL} e_{\tau, h}^1)} \dt \\
        & = \int_{I_{n}} \scalar{- \Pi_n^{k-2}I_{\tau}^{GL} e_{\tau, h}^1}{\Pi_n^{k-2} A_h I_{\tau}^{GL} e_{\tau, h}^0} + \scalar{\Pi_n^{k-2} A_h I_{\tau}^{GL} e_{\tau, h}^0}{\Pi_n^{k-2} I_{\tau}^{GL} e_{\tau, h}^1} \dt \\
        & = 0.
        \end{aligned}
    \end{align}
    Inserting the equations \eqref{statilityT1} and \eqref{stabilityT2} into \eqref{stabilityT} along with the identity $ \norm{A_h^{1/2} v_h} = \norm{\laplace v_h} $ for $ v_h \in V_h $ finally yields the assertion.
\end{proof}

\begin{lemma}[Boundedness]
    Let $ V_{\tau, h} = (\Pi_n^{k-2} A_h I_\tau^{GL} e_{\tau, h}^0, \Pi_n^{k-2} I_\tau^{GL} e_{\tau, h}^1) $. Then the estimate
    \begin{align*}
    \abs{B_n^{GL}(\Theta, V_{\tau, h})} \leq C \tau_n^{1/2} (\tau_n^{k+1} + h^{4}) \{ \tau_n \norm{E_{\tau, h}}^2 + \tau_n^2 Q_n^G(\norm{\partial_t E_{\tau, h}}^2) \}^{1/2}
    \end{align*}
    is valid for all $ n = 1, \ldots, N$.
\end{lemma}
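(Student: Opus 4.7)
The plan is to expand $B_n^{GL}(\Theta,V_{\tau,h})$ by the definition of the bilinear form and the block structure of $\mathcal{L}_h$ into four quadrature contributions,
\[
B_n^{GL}(\Theta,V_{\tau,h}) = Q_n^{GL}\bigl(\scalar{\partial_t\theta^0}{v^0_{\tau,h}}\bigr) + Q_n^{GL}\bigl(\scalar{\partial_t\theta^1}{v^1_{\tau,h}}\bigr) - Q_n^{GL}\bigl(\scalar{P_h\theta^1}{v^0_{\tau,h}}\bigr) + Q_n^{GL}\bigl(\scalar{A_h\theta^0}{v^1_{\tau,h}}\bigr),
\]
and then bound each term by Cauchy--Schwarz inside $Q_n^{GL}$, pairing an interpolation-type estimate for $\Theta$ against a $\|V_{\tau,h}\|$-type factor that will be absorbed in the second factor on the right-hand side of the lemma.

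The crucial step is the $A_h\theta^0$ contribution. Splitting $\theta^0 = (u^0-R_h u^0) + R_h(u^0-R_\tau^k u^0)$ and recalling that the elliptic projection satisfies $(\Delta(u^0-R_h u^0),\Delta v_h)=0$ for every $v_h\in V_h$, the spatial part annihilates against $A_h$. Using the identity $A_h R_h w = P_h A w$ (established in the proof of the lemma on $\partial_t U_{\tau,h}(0)$) we obtain $A_h\theta^0 = -P_h A(R_\tau^k u^0-u^0)$ and therefore $\|A_h\theta^0\|\leq \|\Delta^2(u^0-R_\tau^k u^0)\|\leq C\tau_n^{k+1}\|u^0\|_{C^{k+1}(\bar I_n;H^4)}$ by the temporal reconstruction estimate applied to $\Delta^2 u^0$. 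This is the mechanism that produces the $\tau_n^{k+1}$ on the right-hand side without any negative power of $h$. The $P_h\theta^1$ term is handled directly by $\scalar{P_h\theta^1}{v^0_{\tau,h}}=\scalar{\theta^1}{v^0_{\tau,h}}$ and the interpolation estimate $\|\theta^1\|\leq C(h^4+\tau_n^{k+1})$ from the preceding interpolation-error lemma.

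For the two $\partial_t\Theta$ contributions the naïve interpolation estimate $\|\partial_t\Theta\|\leq C(h^4+\tau_n^k)$ is one $\tau$-power short of the target. The extra order is recovered by exploiting the defining property of $R_\tau^k$ that $\partial_t R_\tau^k u$ matches $\partial_t I_\tau^{k+1} u$ at every Gauss--Lobatto node on $\bar I_n$: evaluating $\partial_t\theta^j$ only at GL points, the remaining error behaves as a Hermite-type interpolation error and gains one order in $\tau$, so that at GL quadrature nodes one has $\|\partial_t\theta^j(t_{n,s}^{GL})\|\leq C(h^4+\tau_n^{k+1})$. This is the main obstacle of the proof and mirrors the argument used for the wave equation in~\cite{Anselmann2020Galerkin,Bause2020post-processed}; the difference in our setting is confined to the spatial rate $h^4$ coming from the BFS elliptic projection instead of the $h^{r+1}$ of a Lagrange element.

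After these pointwise bounds, a Cauchy--Schwarz step in the GL quadrature gives
\[
|B_n^{GL}(\Theta,V_{\tau,h})| \leq C\tau_n^{1/2}(\tau_n^{k+1}+h^4)\,\bigl(Q_n^{GL}(\|V_{\tau,h}\|^2)\bigr)^{1/2}.
\]
The final step applies the norm bound lemma from the end of Section~\ref{sec:error_analysis} to the polynomial $V_{\tau,h}\in(\mathbb{P}_{k-2}(I_n;V_h))^2$, converting $\int_{I_n}\|V_{\tau,h}\|^2\dt$ into $C\tau_n\|V_{\tau,h}(t_{n-1})\|^2+C\tau_n^2\int_{I_n}\|\partial_t V_{\tau,h}\|^2\dt$, and then uses the $L^2$-stability of $\Pi_n^{k-2}$ and $I_\tau^{GL}$ on $V_h$ together with the algebraic relation between $V_{\tau,h}$ and $E_{\tau,h}$ (which carries an $A_h$ in the first component) to arrive at the claimed bound in terms of $\tau_n\|E_{\tau,h}\|^2+\tau_n^2 Q_n^G(\|\partial_t E_{\tau,h}\|^2)$, completing the proof.
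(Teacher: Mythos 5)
Your reconstruction captures the structure that the paper itself only gestures at (its proof is a one-line reference to \cite[Lemma~5.11]{Bause2020post-processed}), and the genuinely biharmonic-specific ingredients are handled correctly: the split $\theta^0=(u^0-R_hu^0)+R_h(u^0-R_\tau^ku^0)$ with $A_h(u^0-R_hu^0)=0$ by the Galerkin orthogonality of \eqref{R_h}, the identity $A_hR_h=P_hA$ yielding $\norm{A_h\theta^0}\le C\tau_n^{k+1}$ with no negative power of $h$, the $O(h^4)$ rates of the $\laplace$-elliptic projection, and the recovery of the extra power of $\tau_n$ for $\partial_t\Theta$ by evaluating only at Gauss--Lobatto nodes, where $\partial_tR_\tau^ku$ coincides with $\partial_tI_\tau^{k+1}u$. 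These are precisely the points the paper identifies as the only differences from the reference.

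The final step, however, contains a genuine gap. After the pointwise bounds you apply Cauchy--Schwarz componentwise and are left with the factor $\bigl(Q_n^{GL}(\norm{V_{\tau,h}}^2)\bigr)^{1/2}$, whose first component involves $\norm{\Pi_n^{k-2}A_hI_\tau^{GL}e^0_{\tau,h}}$. No ``$L^2$-stability plus algebraic relation'' converts this into the $E_{\tau,h}$-quantities of the claimed bound: $A_h$ is an unbounded, mesh-dependent operator on $(V_h,\norm{\cdot})$, and the only generic bound is the inverse estimate $\norm{A_h\psi_h}\le Ch^{-2}\norm{\laplace\psi_h}$, which destroys the $h^4$ rate and couples $\tau$ and $h$. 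The missing idea is to move $A_h$ off the test function \emph{before} estimating. Writing $\psi_h=\Pi_n^{k-2}I_\tau^{GL}e^0_{\tau,h}$, the first-component pairing is $\scalar{\partial_t\theta^0-P_h\theta^1}{A_h\psi_h}$ with
\begin{equation*}
\partial_t\theta^0-P_h\theta^1=(I-P_h)\partial_tu^0+R_h\bigl(R_\tau^ku^1-\partial_tR_\tau^ku^0\bigr);
\end{equation*}
since $A_h\psi_h\in V_h$ the first summand drops out, and for the second the definitions of $A_h$ and $R_h$ give $\scalar{R_hz}{A_h\psi_h}=\scalar{\laplace z}{\laplace\psi_h}$, so the pairing becomes a purely temporal $O(\tau_n^{k+1})$ error measured against $\norm{\laplace\psi_h}=\norm{A_h^{1/2}\psi_h}$. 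Only then does the norm-bound lemma, applied to $A_h^{1/2}\psi_h$ and to $\Pi_n^{k-2}I_\tau^{GL}e^1_{\tau,h}$ together with $I_\tau^{GL}e_{\tau,h}(t_{n-1})=e_{\tau,h}(t_{n-1})$ and Lemma~\ref{dtGLinterpolation}, produce the factor $\{\tau_n\norm{E_{\tau,h}(t_{n-1})}^2+\tau_n^2Q_n^G(\norm{\partial_tE_{\tau,h}}^2)\}^{1/2}$, understood in the energy norm. Your argument is sound for the second component, where the test function carries no $A_h$, but as written the first-component estimate would fail.
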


\begin{proof}
The proof of this lemma differs from that of~\cite[Lemma~5.11]{Bause2020post-processed} only in the definition of the elliptic projection operator 
and the applied projection error estimates.
\end{proof}

\begin{lemma}[Estimates on right-hand side term]
    Let $ V_{\tau, h} = (\Pi_n^{k-2} A_h I_\tau^{GL} e_{\tau, h}^0, \Pi_n^{k-2} I_\tau^{GL} e_{\tau, h}^1) $. Then, the estimation
    \begin{align*}
    Q_n^{GL}(\scalar{(0, f - I_\tau^H f)}{V_{\tau, h}}) \leq C \tau_n^{1/2} \tau_n^{k+1} \{\tau_n \norm{E_{\tau, h}(t_{n-1})}^2 + \tau_n^2 Q_n^G(\norm{\partial_{t} E_{\tau, h}}^2) \}^{1/2}
    \end{align*}
    is valid for $ n = 1, \ldots, N $.
\end{lemma}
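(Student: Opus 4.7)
The plan is to begin by noticing that the first component of the tuple $(0, f - I_\tau^H f)$ vanishes, so the quadrature reduces to the scalar expression $Q_n^{GL}(\scalar{f - I_\tau^H f}{\Pi_n^{k-2} I_\tau^{GL} e_{\tau, h}^1})$. Since the Gauss--Lobatto weights are positive, I would treat $Q_n^{GL}$ as a discrete inner product and apply a discrete Cauchy--Schwarz inequality, splitting the right-hand side into a data factor $[Q_n^{GL}(\norm{f - I_\tau^H f}^2)]^{1/2}$ and a polynomial factor $[Q_n^{GL}(\norm{\Pi_n^{k-2} I_\tau^{GL} e_{\tau, h}^1}^2)]^{1/2}$.

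The data factor is estimated purely by temporal interpolation. By the error bound on $I_\tau^H$ stated earlier in Section~\ref{sec:error_analysis}, $\norm{f - I_\tau^H f}_{C^0(\closure{I}_n;H)} \leq C\tau_n^{k+1} \norm{f}_{C^{k+1}(\closure{I}_n;H)}$. Combined with the fact that $\sum_s \hat{w}_s^{GL} = 2$, this yields
$$[Q_n^{GL}(\norm{f - I_\tau^H f}^2)]^{1/2} \leq C \tau_n^{1/2} \tau_n^{k+1},$$
which is exactly the prefactor appearing in the claim.

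The polynomial factor is where the key quadrature exactness arguments come in. Since $\Pi_n^{k-2} I_\tau^{GL} e_{\tau,h}^1 \in \mathbb{P}_{k-2}(I_n;V_h)$, the integrand $\norm{\Pi_n^{k-2} I_\tau^{GL} e_{\tau,h}^1}^2$ lies in $\mathbb{P}_{2k-4}$, so the $k$-point Gauss--Lobatto rule (exact on $\mathbb{P}_{2k-3}$) replaces the quadrature by the corresponding integral. Then $L^2$-contractivity of the projection $\Pi_n^{k-2}$ removes it, and applying the polynomial norm bound (last lemma of Section~\ref{sec:error_analysis}) to $I_\tau^{GL} e_{\tau,h}^1 \in \mathbb{P}_{k-1}(I_n;V_h)$ gives
$$\int_{I_n} \norm{I_\tau^{GL} e_{\tau,h}^1}^2 \dt \leq C\tau_n \norm{I_\tau^{GL} e_{\tau,h}^1(t_{n-1})}^2 + \tau_n^2 \int_{I_n} \norm{\partial_t I_\tau^{GL} e_{\tau,h}^1}^2 \dt.$$
The endpoint value equals $e_{\tau,h}^1(t_{n-1})$ since $t_{n-1}$ is a Gauss--Lobatto node. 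For the derivative integral, $\partial_t I_\tau^{GL} e_{\tau,h}^1 \in \mathbb{P}_{k-2}$, so $(k-1)$-point Gauss exactness on $\mathbb{P}_{2k-3}$ turns the integral back into $Q_n^G(\norm{\partial_t I_\tau^{GL} e_{\tau,h}^1}^2)$, and Lemma~\ref{dtGLinterpolation} identifies this with $Q_n^G(\norm{\partial_t e_{\tau,h}^1}^2)$, which is bounded by $Q_n^G(\norm{\partial_t E_{\tau,h}}^2)$.

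Putting the two factors together gives the stated estimate. The delicate point is not any single inequality but keeping track of polynomial degrees so that the right quadrature formula is exact at the right moment, and then using Lemma~\ref{dtGLinterpolation} to pass from $\partial_t I_\tau^{GL} e_{\tau,h}^1$ back to $\partial_t e_{\tau,h}^1$ at Gauss nodes; this is the bookkeeping step that makes the discrete right-hand side estimate compatible with the norm appearing in the other lemmas of the convergence argument.
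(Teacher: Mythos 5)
Your proof is correct and follows essentially the same route as the argument the paper defers to (the proof of Lemma~5.11 in the cited reference): discrete Cauchy--Schwarz on the Gauss--Lobatto sum, the $\tau_n^{k+1}$ Hermite interpolation bound for $f - I_\tau^H f$, and then exactness of the quadrature rules plus the polynomial norm bound and Lemma~\ref{dtGLinterpolation} to convert the test-function factor into $\{\tau_n \norm{E_{\tau,h}(t_{n-1})}^2 + \tau_n^2 Q_n^G(\norm{\partial_t E_{\tau,h}}^2)\}^{1/2}$. The degree bookkeeping ($2k-4 \le 2k-3$ for both the $k$-point Gauss--Lobatto and the $(k-1)$-point Gauss rule) checks out, so no gap.
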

\begin{proof}
    The proof differs from proof \cite[Lemma~5.11]{Anselmann2020Galerkin} only in the definition of the operator $ A_h $.
\end{proof}

\begin{lemma}[Estimates on $ E_{\tau, h} $]
    Let $ U_{0, h} = (R_h, u_0, R_h, u_1) $. Then
    \begin{align*}
    \norm{e_{\tau, h}^0(t_n)}_1^2 + \norm{e_{\tau, h}^1}^2 \leq C \left( \tau^{k+1} + h^{4} \right)^2
    \end{align*}
    holds true for all $ n = 1, \ldots, N $. Additionally,
    \begin{align*}
    \norm{\laplace e_{\tau, h}^0(t)} & \leq C (\tau^{k+1} + h^3) \\
    \norm{e_{\tau, h}^0(t)} + \norm{e_{\tau, h}^1(t)} & \leq C (\tau^{k+1} + h^{4})
    \end{align*}
    is statisfied for all $ t \in \closure{I} $.
\end{lemma}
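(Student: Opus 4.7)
The plan is to run the standard energy-type argument for the variational time discretization: insert a carefully chosen test function into the Galerkin orthogonality / consistency relation, use the stability identity to produce a telescoping energy, and close the estimate via a discrete Gronwall inequality after absorbing the $E_{\tau,h}$-dependent factors on the right. First I would verify that the initial error vanishes: since $U_{0,h}=(R_h u_0, R_h u_1)=\mathcal{R}_h U(0)$ and $R_\tau^k U(0)=U(0)$ by construction, we get $E_{\tau,h}(0)=0$. Then, choosing $V_{\tau,h}=(\Pi_n^{k-2} A_h I_\tau^{GL} e_{\tau,h}^0,\Pi_n^{k-2} I_\tau^{GL} e_{\tau,h}^1)$ and splitting $E_{\tau,h}=E-\Theta$, the consistency lemma applied to $B_n^{GL}(E,V_{\tau,h})$ together with the stability identity yields
\begin{equation*}
\tfrac{1}{2}\bigl(\|\Delta e_{\tau,h}^0(t_n)\|^2+\|e_{\tau,h}^1(t_n)\|^2\bigr)-\tfrac{1}{2}\bigl(\|\Delta e_{\tau,h}^0(t_{n-1})\|^2+\|e_{\tau,h}^1(t_{n-1})\|^2\bigr)=Q_n^{GL}(\langle F-I_\tau^H F,V_{\tau,h}\rangle)-B_n^{GL}(\Theta,V_{\tau,h}).
\end{equation*}

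Next I would plug in the two estimates just established: the boundedness lemma for $|B_n^{GL}(\Theta,V_{\tau,h})|$ and the right-hand side lemma for $Q_n^{GL}(\langle(0,f-I_\tau^H f),V_{\tau,h}\rangle)$. Both bounds have the same structural form $C\tau_n^{1/2}(\tau_n^{k+1}+h^4)\{\tau_n\|E_{\tau,h}(t_{n-1})\|^2+\tau_n^2 Q_n^G(\|\partial_t E_{\tau,h}\|^2)\}^{1/2}$. A Young's-inequality split decouples the $(\tau_n^{k+1}+h^4)^2\tau_n$ source from the $\|E_{\tau,h}\|$ and $\|\partial_t E_{\tau,h}\|$ factors. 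The $\|E_{\tau,h}\|$ contribution can be controlled by $\|\Delta e_{\tau,h}^0\|$ via the Poincaré--Friedrichs inequality on $H^2_0(\Omega)$ and absorbed after summation into the energy on the left via discrete Gronwall. The $\partial_t E_{\tau,h}$ contribution is not controlled by the current energy; here I would use the already proved a priori estimates \eqref{error_partial_t_1}--\eqref{error_partial_t_2} on $\partial_t U-\partial_t U_{\tau,h}$ together with \eqref{est_int_error2} for $\partial_t\Theta$ and the identity $\partial_t E_{\tau,h}=\partial_t E-\partial_t\Theta$, producing a bound $Q_n^G(\|\partial_t E_{\tau,h}\|^2)\leq C\tau_n(\tau^{k}+h^4)^2$ that enters the right-hand side as a harmless data term.

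Summing over $n=1,\ldots,m$, using $E_{\tau,h}(0)=0$, and invoking discrete Gronwall gives the first asserted bound at nodes,
\begin{equation*}
\|\Delta e_{\tau,h}^0(t_n)\|^2+\|e_{\tau,h}^1(t_n)\|^2\leq C(\tau^{k+1}+h^4)^2,
\end{equation*}
from which $\|e_{\tau,h}^0(t_n)\|_1\leq C\|\Delta e_{\tau,h}^0(t_n)\|$ follows by the Poincaré--Friedrichs inequality on $H^2_0(\Omega)$. To extend to arbitrary $t\in\bar{I}$, I would exploit that $e_{\tau,h}^0\restr{I_n}\in\mathbb{P}_k(I_n;V_h)$ is a polynomial in time, so its values at interior $t$ are controlled by the nodal values at $t_{n-1}$ and $t_n$ together with a $\tau_n\|\partial_t E_{\tau,h}\|_{C(\bar I_n)}$ term that is again absorbed using \eqref{error_partial_t_1}--\eqref{error_partial_t_2} and \eqref{est_int_error2}. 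A triangle inequality with $E=\Theta+E_{\tau,h}$, combined with the interpolation estimates \eqref{est_int_error1}, yields the two remaining continuous-in-$t$ bounds; the order $h^3$ in $\|\Delta e_{\tau,h}^0(t)\|$ is the limiting spatial accuracy inherited from the elliptic projection via \eqref{ell_proj_err2} with $m=1$ combined with an inverse-type argument on $V_h$.

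The main obstacle is the coupling to $\partial_t E_{\tau,h}$ in the right-hand side estimates: the energy argument for $E_{\tau,h}$ by itself only controls $\|\Delta e_{\tau,h}^0(t_n)\|$ and $\|e_{\tau,h}^1(t_n)\|$, not the time derivative, so closing the estimate requires feeding in the independently established $\partial_t U_{\tau,h}$ theorem of the previous subsection. Once that cross-coupling is isolated and shown to contribute only $O(\tau^{k+1}+h^4)$, the remainder is a textbook Gronwall application.
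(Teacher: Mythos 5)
Your proposal is correct and reconstructs essentially the argument the paper delegates to \cite[Lemma~5.12]{Anselmann2020Galerkin}: the splitting $E=\Theta+E_{\tau,h}$ with $E_{\tau,h}(0)=0$, the test function from the stability lemma combined with the consistency, boundedness and right-hand-side lemmas, absorption of the $Q_n^G(\norm{\partial_t E_{\tau,h}}^2)$ coupling via the previously proved estimates \eqref{error_partial_t_1}--\eqref{error_partial_t_2} and \eqref{est_int_error2}, and a discrete Gronwall argument, followed by the Taylor-type extension from the nodes to all $t\in\closure{I}$. You also correctly identify the one genuinely nontrivial structural point, namely that the energy argument alone does not control $\partial_t E_{\tau,h}$ and must be closed with the time-derivative theorem of the preceding subsection.
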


\begin{proof}
The proof follows the lines of~\cite[Lemma~5.12]{Anselmann2020Galerkin}.
\end{proof}

\begin{theorem}[Error estimate for $ U_{\tau, h} $]\label{thm:estimat_U}
    Let $ U = (u, \partial_{t} u) $ be the solution of the problem \eqref{eq:instationary_plate} and $ U_{\tau, h} $ be the discrete solution of problem \eqref{eq:collocation} along with the initial condition $ U_{0, h} = (R_{h} u_{0}, R_{h} u_{1}) $. Then the following estimates for the error $ E(t) = (e^{0}(t), e^{1}(t)) = U(t) - U_{\tau, h}(t) $ apply for all $ t \in \closure{I} $:
    \begin{align*}
    \norm{e^0(t)} + \norm{e^1(t)} & \leq C (\tau^{k + 1} + h^{4}), \\
    \norm{\laplace e^{0}(t)} & \leq C (\tau^{k + 1} + h^{3}).
    \end{align*}
    Additionally, the estimates 
    \begin{align}
        \norm{e^0}_{L^2(I; H)} + \norm{e^1}_{L^2(I; H)} & \leq C (\tau^{k+1} + h^{4}), \label{eq:error_estimate_a} \\
        \norm{\laplace e^0}_{L^2(I; H)} & \leq C (\tau^{k + 1} + h^{3}) \nonumber
    \end{align}
    are satisfied.
\end{theorem}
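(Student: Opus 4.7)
The plan is to derive the stated error estimates via the splitting $E(t) = \Theta(t) + E_{\tau,h}(t)$ introduced earlier in the subsection, combined with the triangle inequality. All of the non-trivial machinery --- the interpolation and projection bounds for $\Theta$, and the discrete stability and consistency analysis for $E_{\tau,h}$ --- has already been set up in the preceding lemmas, so what remains is essentially to package those bounds together.

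First, for the pointwise-in-time bound on $\|e^0(t)\| + \|e^1(t)\|$, I would apply the triangle inequality componentwise,
\[
\|e^0(t)\| + \|e^1(t)\| \leq \|\Theta(t)\|_0 + \|e_{\tau,h}^0(t)\| + \|e_{\tau,h}^1(t)\|,
\]
bound the interpolation/projection term by $C(h^{4} + \tau^{k+1})$ using the "Estimation of the interpolation error" lemma with $m=0$, and invoke the "Estimates on $E_{\tau,h}$" lemma for the remaining two terms, which contributes $C(\tau^{k+1}+h^{4})$. Summing gives the first claim. For $\|\Delta e^0(t)\|$ I would split analogously into $\|\Delta(u - R_h R_\tau^k u)\|$ and $\|\Delta e_{\tau,h}^0(t)\|$; the first summand is controlled by the $H^2$-type approximation property of the BFS elliptic projection from~\eqref{ell_proj_err2} together with Lemma~\ref{lemma_R_tauk}, and the second by the $C(\tau^{k+1}+h^{3})$ bound already proved in the "Estimates on $E_{\tau,h}$" lemma.

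The $L^2(I;H)$ estimates then follow immediately by squaring the pointwise bounds, integrating over $I$, and taking square roots: since the right-hand sides depend only on norms of the exact solution $U$ (uniformly in $t$), a factor of $\sqrt{T}$ is absorbed into the generic constant $C$, yielding~\eqref{eq:error_estimate_a} and the accompanying bound on $\|\Delta e^0\|_{L^2(I;H)}$.

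No new analytical tools are required. The only obstacle is bookkeeping --- making sure that the spatial and temporal convergence rates on both sides of the splitting match the rates asserted in the theorem. In particular, one should verify carefully that the $\|\Delta(u-R_h R_\tau^k u)\|$ contribution from the projection/interpolation part is consistent with the $h^{3}$ rate coming from the $E_{\tau,h}$ estimate (using the combined spatial and temporal approximation properties available for $\mathcal{R}_h$ and $R_\tau^k$). Since the preceding lemmas have been tailored precisely to deliver matching rates, the proof itself should be short, essentially a triangle-inequality argument, with the heavy lifting entirely contained in the lemmas that precede it.
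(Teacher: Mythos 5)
Your proposal is essentially the paper's own argument: the paper's proof simply defers to \cite[Theorem~5.13]{Anselmann2020Galerkin}, which is exactly the triangle-inequality assembly you describe --- the splitting $E=\Theta+E_{\tau,h}$, the interpolation-error lemma for $\Theta$, the ``Estimates on $E_{\tau,h}$'' lemma for the discrete part, and integration in time (with a factor $\sqrt{T}$ absorbed into $C$) for the $L^2(I;H)$ bounds. The one point that genuinely needs care is the one you flag yourself: the interpolation-error lemma is stated only for $m\in\{0,1\}$, so the term $\|\Delta\Theta_0(t)\|$ must be controlled via $\|\Delta(w-R_hw)\|\le Ch^{2}\|w\|_4$, which by itself yields only $h^{2}$ rather than the asserted $h^{3}$; this rate mismatch is inherited from the paper's own framework rather than introduced by your argument.
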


\begin{proof}
The proof of this theorem differs from the proof of~\cite[Theorem~5.13]{Anselmann2020Galerkin} only in the definition of the operators.
\end{proof}

\section{Numerical experiments}\label{sec:num}

The aim of the numerical experiments included in this section is:
\begin{itemize}
\item[(i)] to compute the numerical convergence orders for the time discretizations discussed in the previous sections;
\item[(ii)] to compare the solutions obtained when using the cGP-C$^1(3)$-, the cGP($2$)- and the
Crank--Nicolson (cGP(1)-) method for time discretization.
\end{itemize}

The numerical experiments in the first part of this section are used to confirm the error estimates proven in Section~\ref{sec:error_estimates} and to show the 
faster convergence of the cGP-C$^1(3)$-method compared with the other algorithms investigated. In the second part of this section, 
we perform a comparative study in order to demonstrate the superiority of the cGP-C$^1(3)$-method over the considered only continuous in time approximation schemes.

We assume that the spatial domain $ \Omega \subset \R^2 $ is either the unit square $ (0, 1)^2 $ or the square $ (-1, 1)^2 $ which during the discretization process has been partitioned
as bisections of $N^2$ squares with mesh size $h = \sqrt{2}/N$ or $ h=2\sqrt{2}/N $, respectively. Furthermore, we use the Bogner--Fox--Schmit element throughout for spatial discretization.

All the numerical tests included in this section have been conducted in NGSolve, see \url{https://ngsolve.org}. 

\subsection{Numerical convergence study}
We will utilize the first example to provide numerical evidence for the error estimates proven in Section~\ref{sec:error_estimates}, on the one hand, and to present the better convergence properties of the cGP-C$^1(3)$-method, on the other. For this purpose, we compare the experimental order of convergence of different time discretization schemes.

Thereby, we expect to obtain an order of convergence of 2 for the Crank-Nicolson (cGP(1)-) method, an order of 3 for the cGP(2)-method, and an order of 4 for the cGP-C$^1(3)$-method.
In addition, as in \cite{Bause2020post-processed} for the wave equation, we expect to observe superconvergence for the cGP(2)-method in the discrete time points.
While we predict a convergence order of 4 only in the discrete points for the cGP(2)-method, we anticipate a convergence order of 4 in all time points for the cGP-C$^1(3)$-method.

In our first example, we solve system~\eqref{eq:instationary_plate} for the spatial domain $ \Omega = (0, 1)^{2} $ and the temporal domain $ I = (0, 1) $. 
For a right-hand side 
\begin{align*}
    f(\vec{x}, t) & = - 4 \pi^{2} \sin(2 \pi t) \cdot \sin^{2}(\pi x_{1}) \cdot \sin^{2}(\pi x_{2}) \\
    & \quad + \pi^{4} \sin(2 \pi t) \cdot \left[ 16 \sin^{2}(\pi x_{1}) - 8 \right] \cdot \sin^{2}(\pi x_{2}) \\
    & \quad + 2 \pi^{4} \sin(2 \pi t) \left[ 2 - 4 \sin^{2}(\pi x_{1}) \right] \cdot \left[ 2 - 4 \sin^{2}(\pi x_{2}) \right] \\
    & \quad + \pi^{4} \sin(2 \pi t) \cdot \sin^{2}(\pi x_{1}) \cdot \left[ 16 \sin^{2}(\pi x_{2}) - 8 \right]
\end{align*}
and initial values
\begin{align*}
u_{0}(\vec{x}) & = 0, \\
u_{1}(\vec{x}) & = 2 \pi \cdot \sin^{2}(\pi x_{1}) \cdot \sin^{2}(\pi x_{2}),
\end{align*}
the exact solution $u$ is given by 
\begin{align}
    \label{eq:fct2}
    u(\vec{x}, t) = \sin(2 \pi t) \cdot \sin^{2}(\pi x_{1}) \cdot \sin^{2}(\pi x_{2}).
\end{align} 
The corresponding numerical solution of Problem~\eqref{eq:instationary_plate} computed with 
the cGP-C$^1(3)$-method for 
$ \tau = 0.01, \, h = \frac{\sqrt{2}}{10} $ is shown in Figure~\ref{fig:solution2}.

\begin{figure}[htb]
    \centering
    \begin{tabular}{ccccc}
        \includegraphics[clip, trim=500 100 500 50, width=2.8cm]{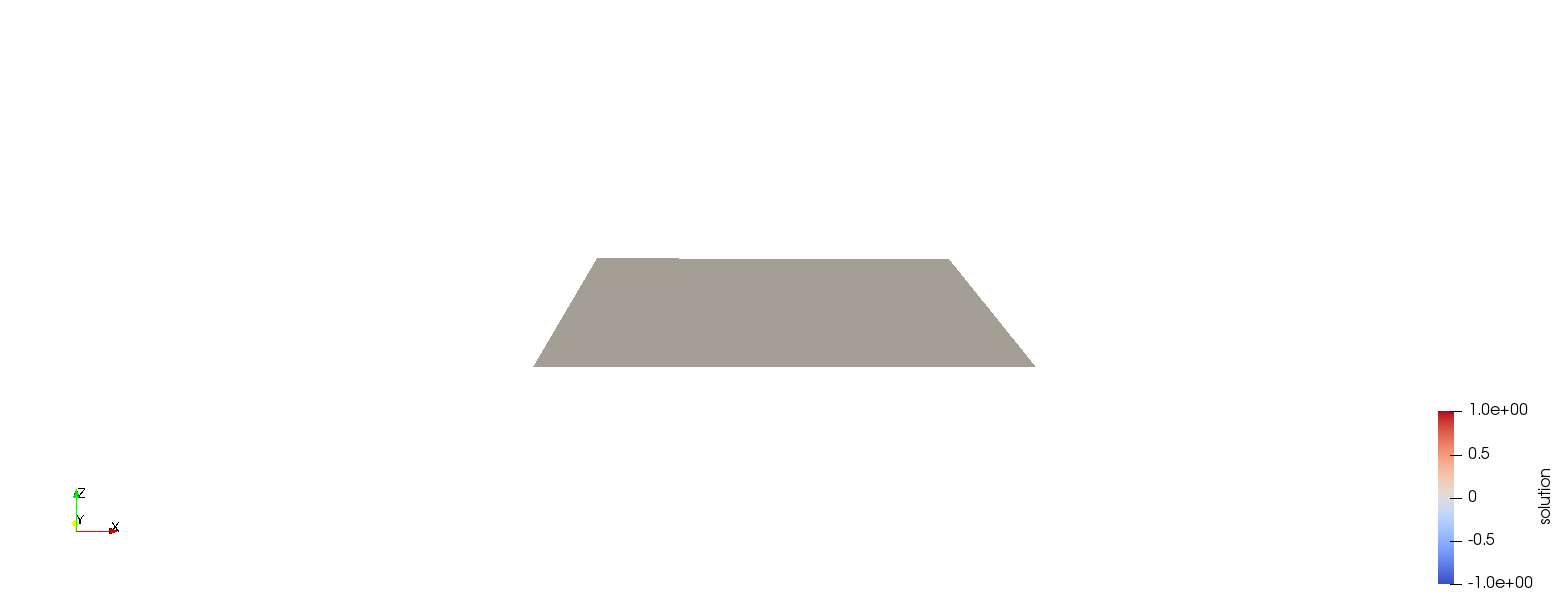} & \includegraphics[clip, trim=500 100 500 50, width=2.8cm]{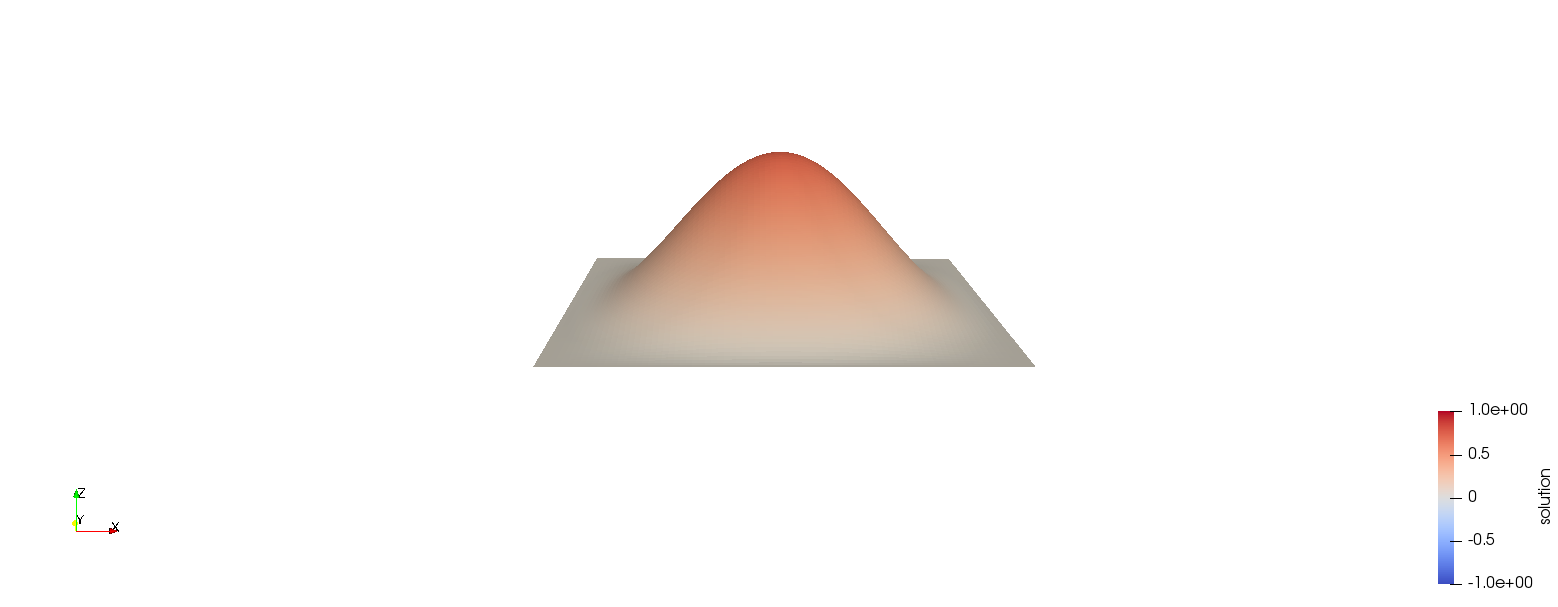} & \includegraphics[clip, trim=500 100 500 50, width=2.8cm]{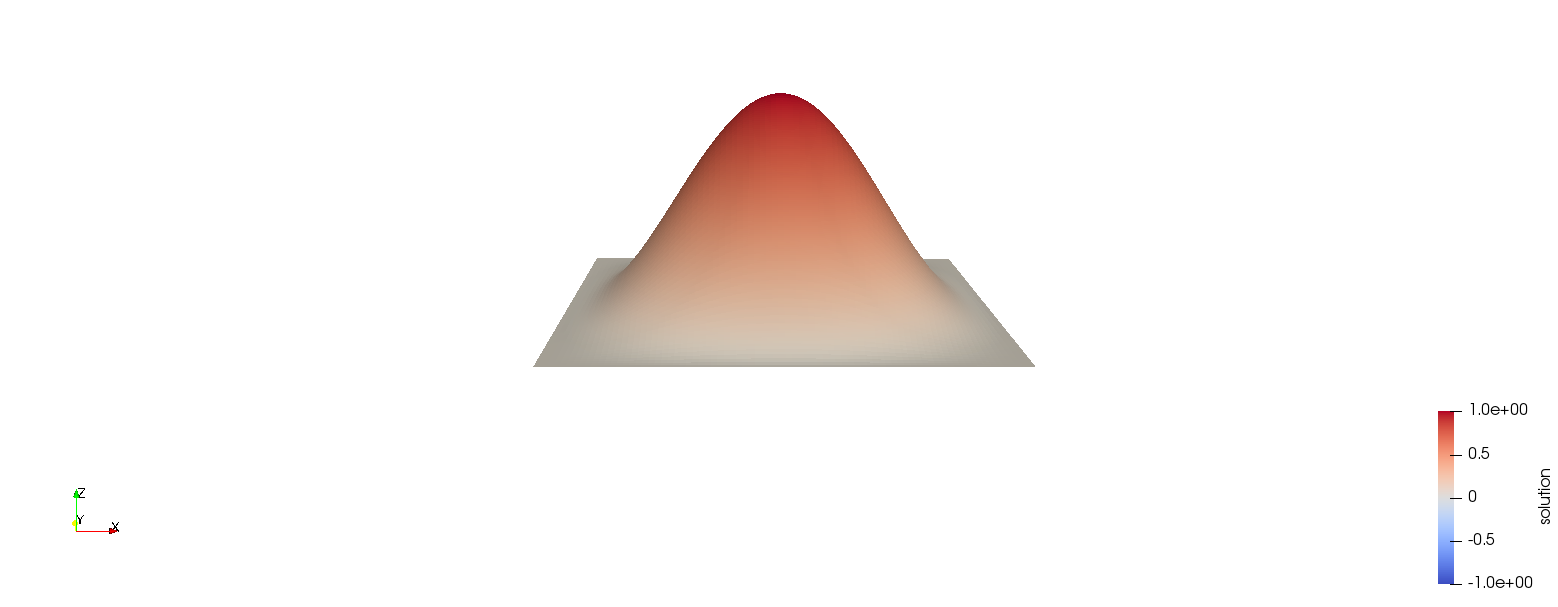} & \includegraphics[clip, trim=500 100 500 50, width=2.8cm]{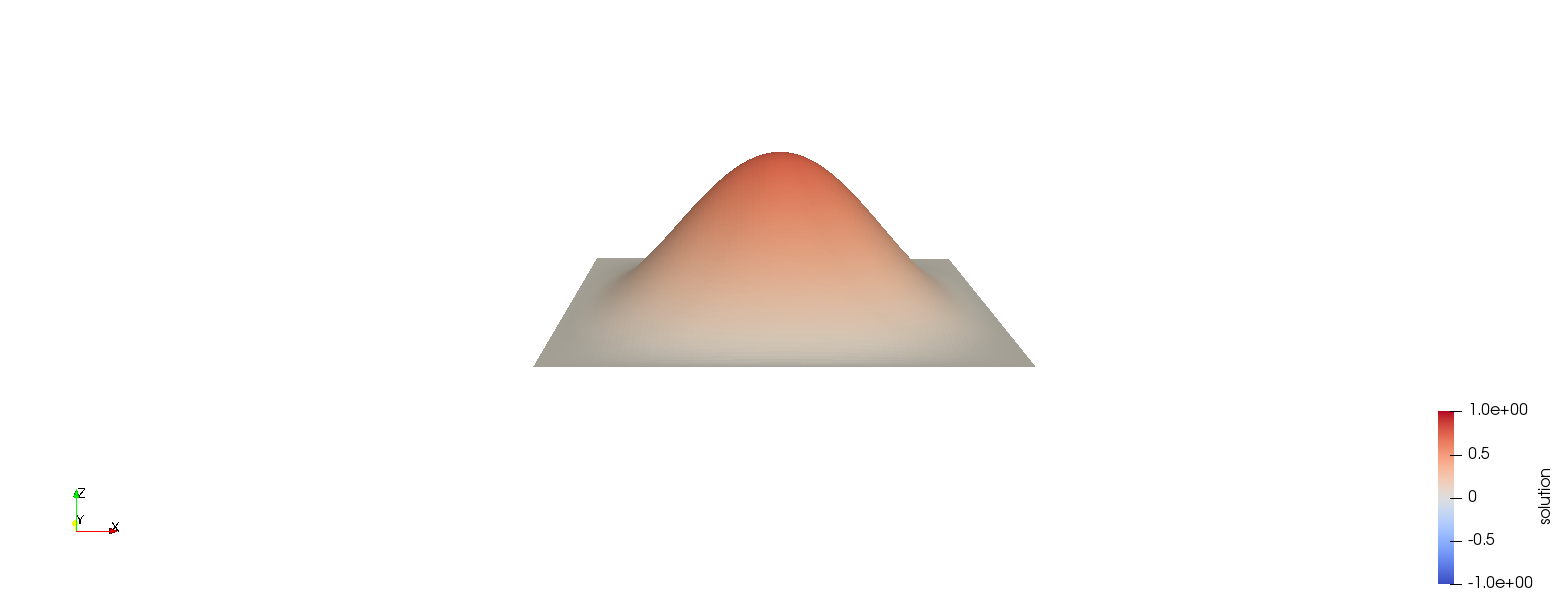} &  \\ 
        \includegraphics[clip, trim=500 50 500 100, width=2.8cm]{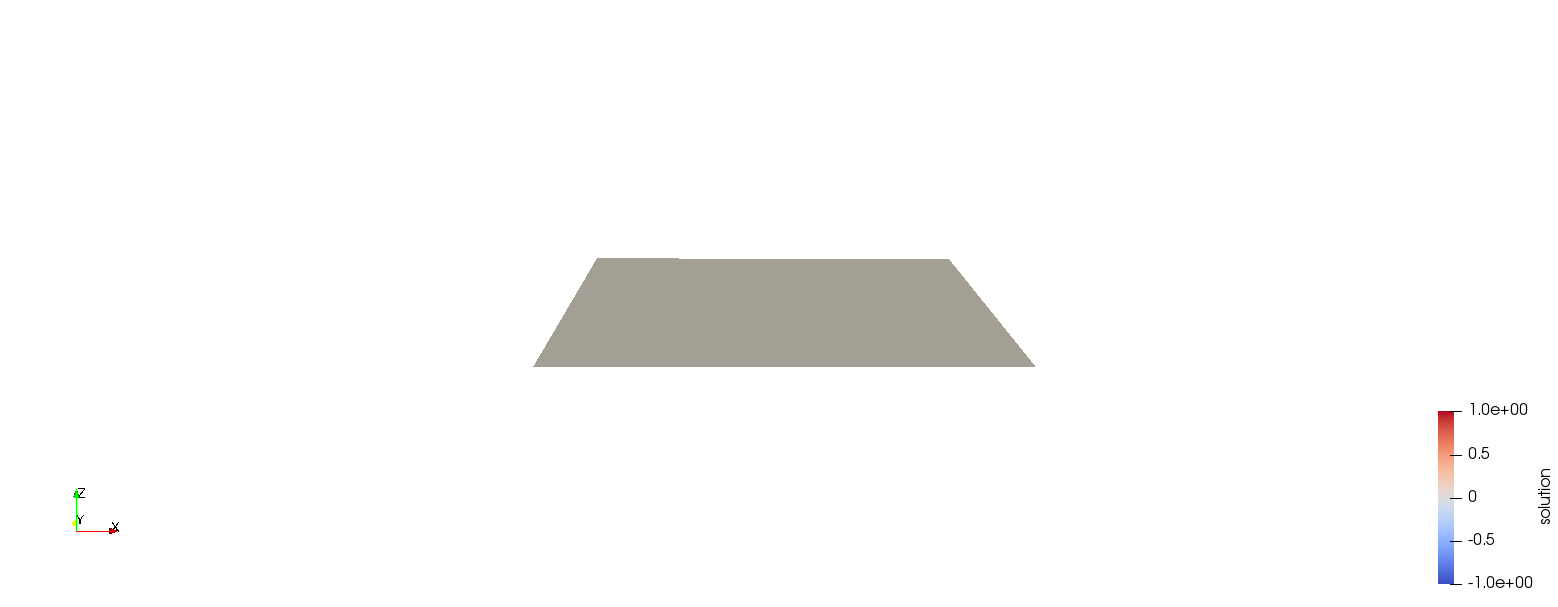} & \includegraphics[clip, trim=500 50 500 100, width=2.8cm]{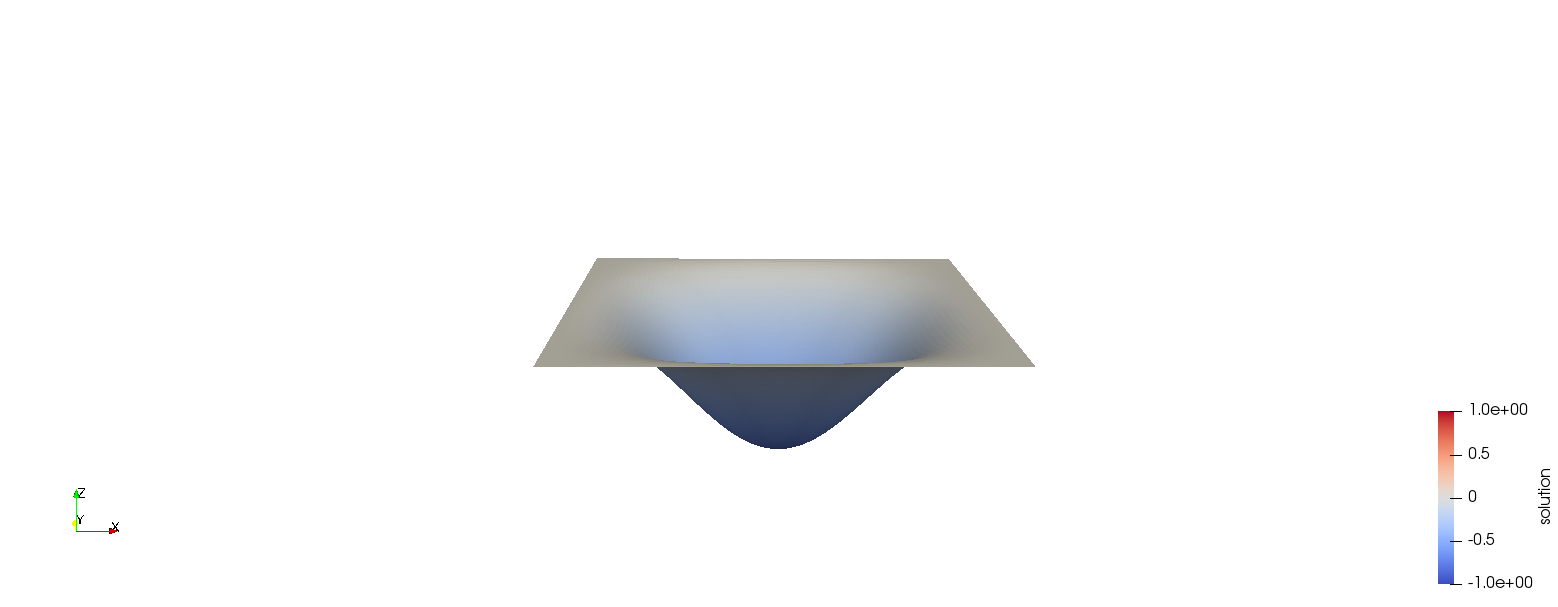} & \includegraphics[clip, trim=500 50 500 100, width=2.8cm]{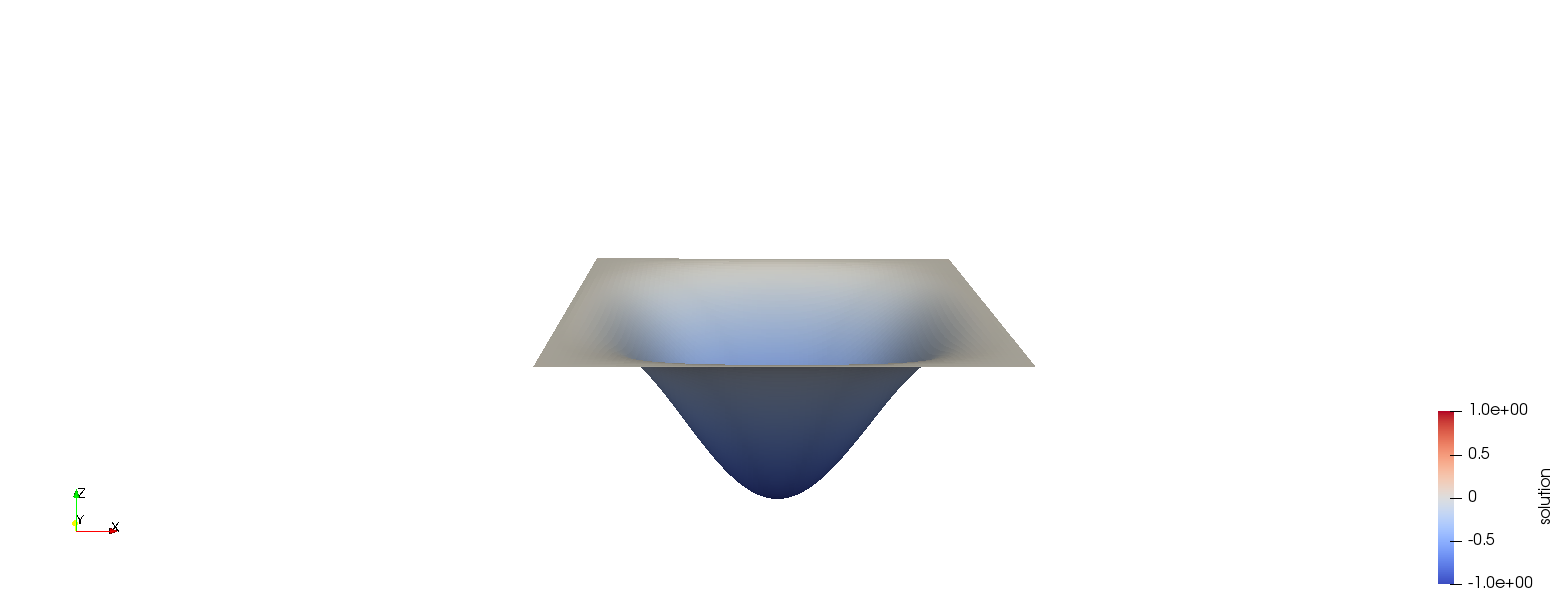} & \includegraphics[clip, trim=500 50 500 100, width=2.8cm]{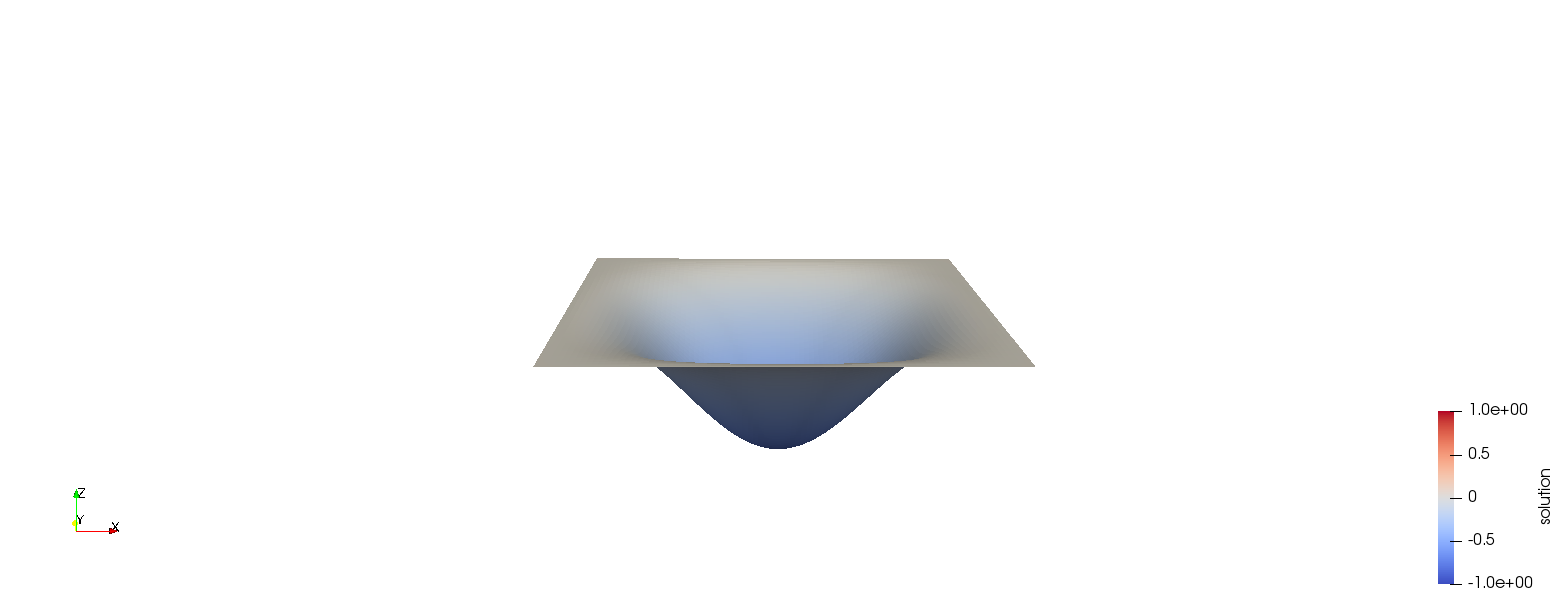} & \multirow[t]{2}{*}{\includegraphics[clip, trim=1080 130 360 80, width=1.6cm, height=4.74cm]{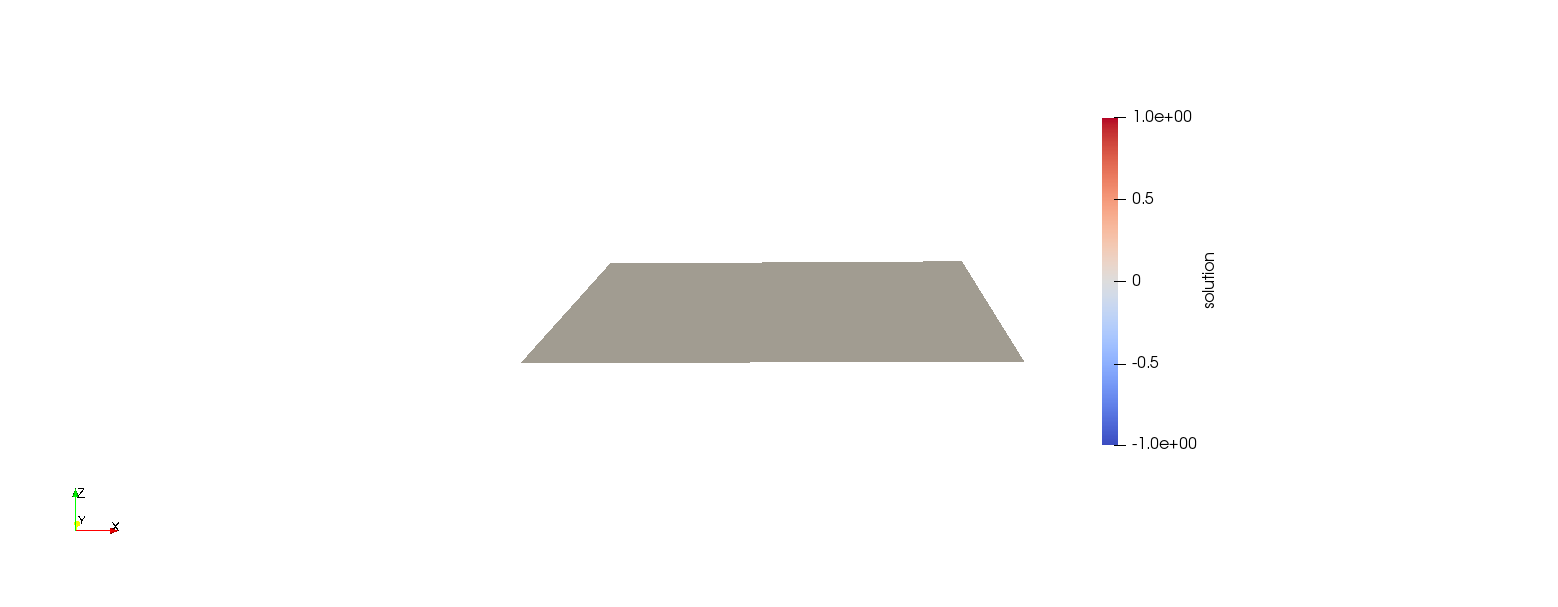}} \\
    \end{tabular}
    \caption{Numerical solution of Problem~\eqref{eq:instationary_plate} for the function~\eqref{eq:fct2} when the cGP-C$^1(3)$-method 
        with parameters $ \tau = 0.01, \, h = \frac{\sqrt{2}}{10}$ is applied.}
    \label{fig:solution2}
\end{figure}
 
For a comparison of the solutions of different time discretization methods, we consider the norms 
\begin{align*}
\norm{u - u_{\tau, h}}_{L^{\infty}(L^{2})} & =
\max_{t \in [0, T]} \left( \int_{\Omega} \abs{u(\vec{x}, t) - u_{\tau, h}(\vec{x}, t)}^{2} \dx \right)^{\frac{1}{2}}
\end{align*}
and
\begin{align*}
\norm{u - u_{\tau, h}}_{L^{2}(L^{2})} & = 
\left( \int_{I} \int_{\Omega} \abs{u(\vec{x}, t) - u_{\tau, h}(\vec{x}, t)}^{2} \dx \dt \right)^{\frac{1}{2}}.
\end{align*}
In order to approximate the $ \norm{\cdot}_{L^\infty(L^2)} $ norm, we first evaluate the maximum only in the discrete temporal points in which we have computed the discrete solution 
$ u_{\tau, h} $ and denote this value by $ \norm{\cdot}_{L_{\tau}^\infty(L^2)} $.
Secondly, we determine the maximum by additionally evaluating the discrete solution in the time points
\begin{align*}
    I_{\tau} & = \{t_{n, j} : t_{n, j} = t_{n-1} + j \cdot \frac{1}{100} \cdot \tau_n, \, j = 1, \ldots, 99, \, n = 1, \ldots, N \},
\end{align*}
which we denote by $ \norm{\cdot}_{L^\infty(L^2)} $.

We compute both norms on a sequence of spatial and temporal meshes in order to determine the numerical convergence orders.
We start with $ \tau_{0} = 0.1 , \, h_{0} = \frac{\sqrt{2}}{5} $ and halve both after each pass.
With $ e_{\tau, h} $ we denote the error with time step $ \tau $ and mesh size $ h $, 
then the following formula 
\begin{align*}
EOC = \log_{2}\left(\frac{e_{\tau, h}}{e_{\frac{\tau}{2}, \frac{h}{2}}}\right)
\end{align*}
is applied to compute the experimental order of convergence (EOC).

The discretization errors and the corresponding convergence orders for the Crank--Nicolson, cGP(2)- and cGP-C$^1$(3)-method in case of function~\eqref{eq:fct2} 
are presented in Table~\ref{tab:fct2_errors}.

\begin{table}[htb]
    \centering
    \small
    \begin{tabular}{cccccccc}
        \hline $ \tau $ & $ h $ & $ \norm{u - u_{\tau, h}}_{L_\tau^{\infty}(L^{2})} $ & order & $ \norm{u - u_{\tau, h}}_{L^{\infty}(L^{2})} $ & order & $ \norm{u - u_{\tau, h}}_{L^{2}(L^{2})} $ & order \\ 
        \hline \multicolumn{8}{c}{cGP(1)} \\
        \hline $ \tau_{0} / 2^{0} $ & $ h_{0} / 2^{0} $ & 3.296e-03 & -- & 1.794e-02 & -- & 9.081e-03 & -- \\ 
        $ \tau_{0} / 2^{1} $ & $ h_{0} / 2^{1} $ & 7.835e-04 & 2.07 & 4.794e-03 & 1.90 & 2.281e-03 & 1.99 \\ 
        $ \tau_{0} / 2^{2} $ & $ h_{0} / 2^{2} $ & 1.877e-04 & 2.06 & 1.233e-03 & 1.96 & 5.746e-04 & 1.99 \\ 
        $ \tau_{0} / 2^{3} $ & $ h_{0} / 2^{3} $ & 4.746e-05 & 1.98 & 3.080e-04 & 2.00 & 1.435e-04 & 2.00 \\ 
        $ \tau_{0} / 2^{4} $ & $ h_{0} / 2^{4} $ & 1.194e-05 & 1.99 & 7.701e-05 & 2.00 & 3.589e-05 & 2.00 \\ 
        \hline \multicolumn{8}{c}{cGP(2)} \\
        \hline $ \tau_{0} / 2^{0} $ & $ h_{0} / 2^{0} $ & 1.058e-03 & -- & 1.059e-03 & -- & 8.369e-04 & -- \\ 
        $ \tau_{0} / 2^{1} $ & $ h_{0} / 2^{1} $ & 7.258e-05 & 3.87 & 1.063e-04 & 3.32 & 6.731e-05 & 3.64 \\ 
        $ \tau_{0} / 2^{2} $ & $ h_{0} / 2^{2} $ & 4.553e-06 & 3.99 & 1.247e-05 & 3.09 & 6.635e-06 & 3.34 \\ 
        $ \tau_{0} / 2^{3} $ & $ h_{0} / 2^{3} $ & 2.867e-07 & 3.99 & 1.510e-06 & 3.05 & 7.625e-07 & 3.12 \\ 
        $ \tau_{0} / 2^{4} $ & $ h_{0} / 2^{4} $ & 1.798e-08 & 4.00 & 1.857e-07 & 3.02 & 9.310e-08 & 3.03 \\ 
        \hline \multicolumn{8}{c}{cGP-C$^1$(3)} \\
        \hline $ \tau_{0} / 2^{0} $ & $ h_{0} / 2^{0} $ & 1.165e-03 & -- & 1.231e-03 & -- & 8.533e-04 & -- \\ 
        $ \tau_{0} / 2^{1} $ & $ h_{0} / 2^{1} $ & 8.141e-05 & 3.84 & 8.151e-05 & 3.92 & 5.673e-05 & 3.91 \\ 
        $ \tau_{0} / 2^{2} $ & $ h_{0} / 2^{2} $ & 5.314e-06 & 3.94 & 5.314e-06 & 3.94 & 3.363e-06 & 4.08 \\ 
        $ \tau_{0} / 2^{3} $ & $ h_{0} / 2^{3} $ & 2.998e-07 & 4.15 & 2.999e-07 & 4.15 & 2.005e-07 & 4.07 \\ 
        $ \tau_{0} / 2^{4} $ & $ h_{0} / 2^{4} $ & 1.823e-08 & 4.04 & 1.824e-08 & 4.04 & 1.222e-08 & 4.04 \\
        \hline 
    \end{tabular} 
    \caption{Numerical errors and convergence orders for the Crank--Nicolson (cGP(1)), cGP(2)- and cGP-C$^1$(3)-method for the function~\eqref{eq:fct2}.}
    \label{tab:fct2_errors}
\end{table}

As seen from this table, the discretization errors for the cGP(2)- and the cGP-C$^1(3)$-method are smaller than the corresponding values for 
the Crank--Nicolson method.
Moreover, the cGP(2)-method with global continuous and piecewise quadratic functions gives higher convergence orders than the 
Crank--Nicolson method with piecewise linear functions. The cGP-C$^1(3)$-method has the highest convergence orders in both norms of all the methods studied.

The numerical convergence orders in the $ \norm{\cdot}_{L^{2}(L^{2})} $ norm tend to $2$ for the Crank--Nicolson method and 
for the cGP(2)-method to $ 3 $. 
In the $ \norm{\cdot}_{L_\tau^{\infty}(L^{2})} $ norm it can be seen even that the convergence order is $ 4 $ 
for the cGP(2)-method whereas it is $ 2 $ for the Crank--Nicolson algorithm. 
This observed superconvergence in the $ \norm{\cdot}_{L_\tau^{\infty}(L^{2})} $ norm is due to the evaluation in the Gauss-Lobatto points. Comparing the evaluation of the maximum only in the time points where we have computed the discrete solution as in $ \norm{\cdot}_{L_\tau^{\infty}(L^{2})} $ with the evaluation on a finer temporal mesh as 
performed with $ \norm{\cdot}_{L^{\infty}(L^{2})} $, verifies this superconvergence effect.
For a more detailed study of the superconvergence in the case of the wave equation, we refer to, e.g., \cite{Bause2020post-processed}.
All convergence orders for the cGP-C$^1$(3)-method tend to 4. This confirms the convergence order proven in Theorem~\ref{thm:estimat_U}.

Consequently, we obtain better convergence results in all discrete time points for the cGP-C$^1(3)$-method, although we have the same numerical costs in terms of degrees of freedom as for the cGP(2)-method.

\subsection{Vibration in heterogeneous media}
In this section, we want to show that the cGP-C$^1(3)$-method is also applicable for more complex problems. Additionally, we stress the superiority of this method by analyzing and comparing the number of non-zero entries in the system matrix which is involved in the linear system of equations in every time step.

As a second example in this work, we consider the modified problem
\begin{subequations}
    \label{eq:structure_health}
    \begin{align}
        \partial_{tt} u(\vec{x}, t) + \laplace [c(\vec{x}) \laplace u(\vec{x}, t)] & =  f(\vec{x}, t) & & \qquad \text{in} \ \Omega \times \left(0, T\right], \\
        u(\vec{x}, 0) & = u_{0}(\vec{x}) & & \qquad \text{in} \ \Omega, \\
        \partial_{t} u(\vec{x}, 0) & = u_{1}(\vec{x}) & & \qquad \text{in} \ \Omega, \\
        u(\vec{x}, t) & = 0 & & \qquad \text{on} \ \boundary \Omega \times \left(0, T\right], \\
        \partial_{\vec{n}} u(\vec{x}, t) & = 0 & & \qquad \text{on} \ \boundary \Omega \times \left(0, T\right],
    \end{align}
\end{subequations}
which arises from structural health monitoring and can be treated analogously. The coefficient $ c > 0 $ encodes the stiffness of the involved materials. Here we use the setting
\begin{align*}
    \Omega = (-1, 1)^2, \qquad T = \frac{3}{100}, \qquad
    c(\vec{x}) = 
    \begin{cases}
        1, & \text{if } x_2 < 0.2, \\
        9, & \text{if } x_2 \geq 0.2,
    \end{cases}
    \qquad f = 0,
\end{align*}
with the initial values
\begin{align*}
    u_0 &= 0.2 \cdot \exp(- \abs{10 \cdot \vec{x}}^2) \cdot (1 - x_1^2)^2 \cdot (1 - x_2^2)^2, \\
    u_1 &= 0.
\end{align*}
The first few time steps of the numerical solution of Problem~\eqref{eq:structure_health} using the cGP-C$^1(3)$-method are depicted in Figure~\ref{fig:structure_health}.
\begin{figure}[htbp]
    \centering
    \begin{tabular}{cccc}
        \includegraphics[clip, trim=515 40 515 40, width=3.5cm]{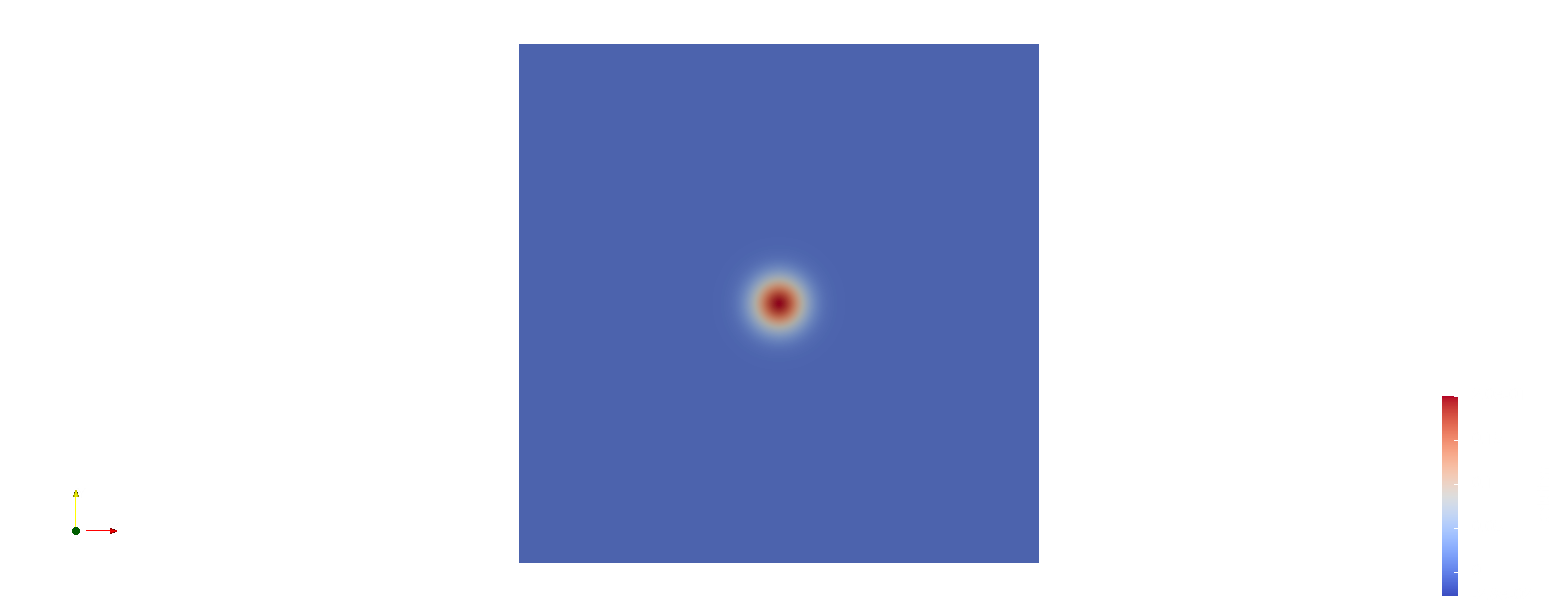} & \includegraphics[clip, trim=515 40 515 40, width=3.5cm]{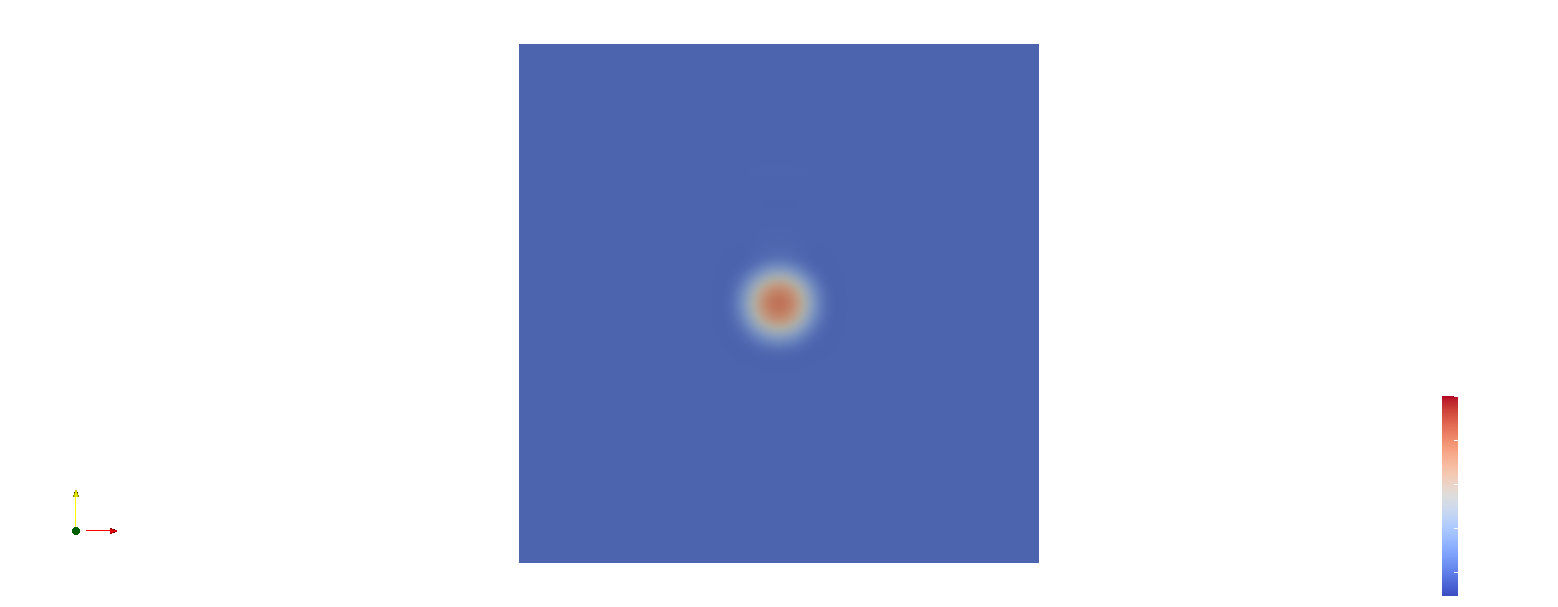} & \includegraphics[clip, trim=515 40 515 40, width=3.5cm]{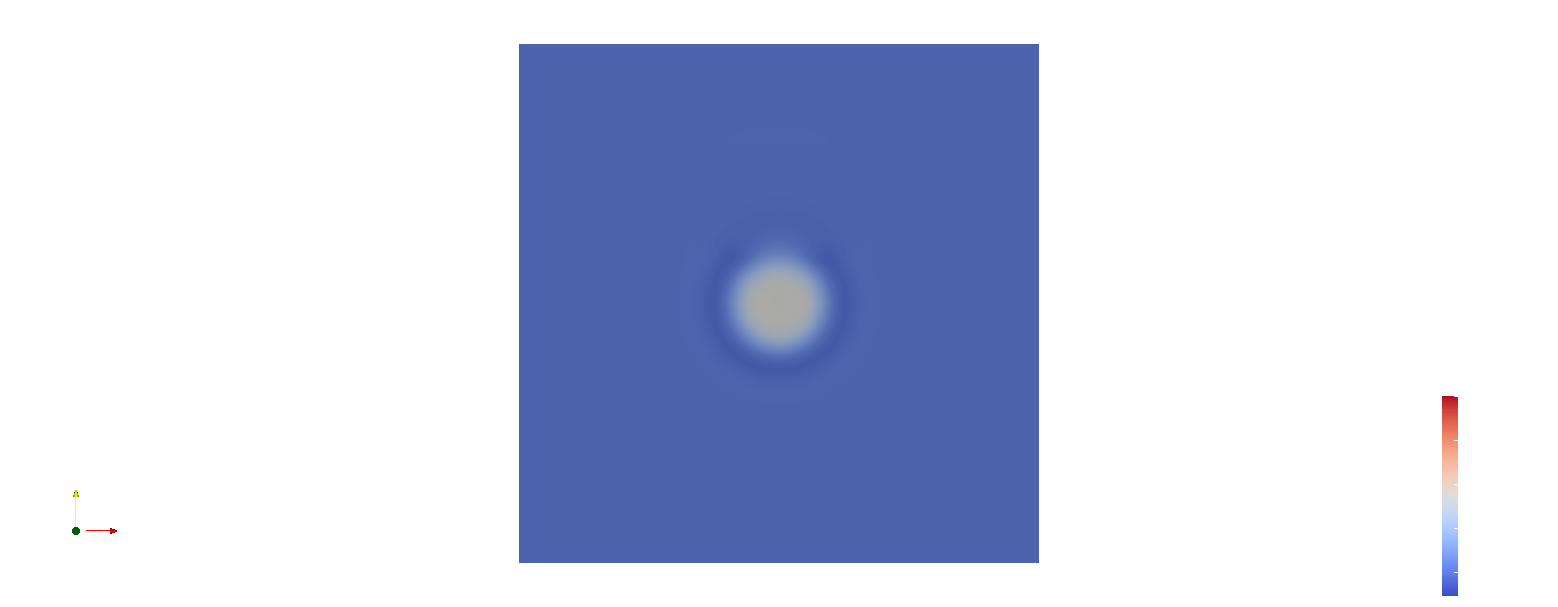} & \includegraphics[clip, trim=515 40 515 40, width=3.5cm]{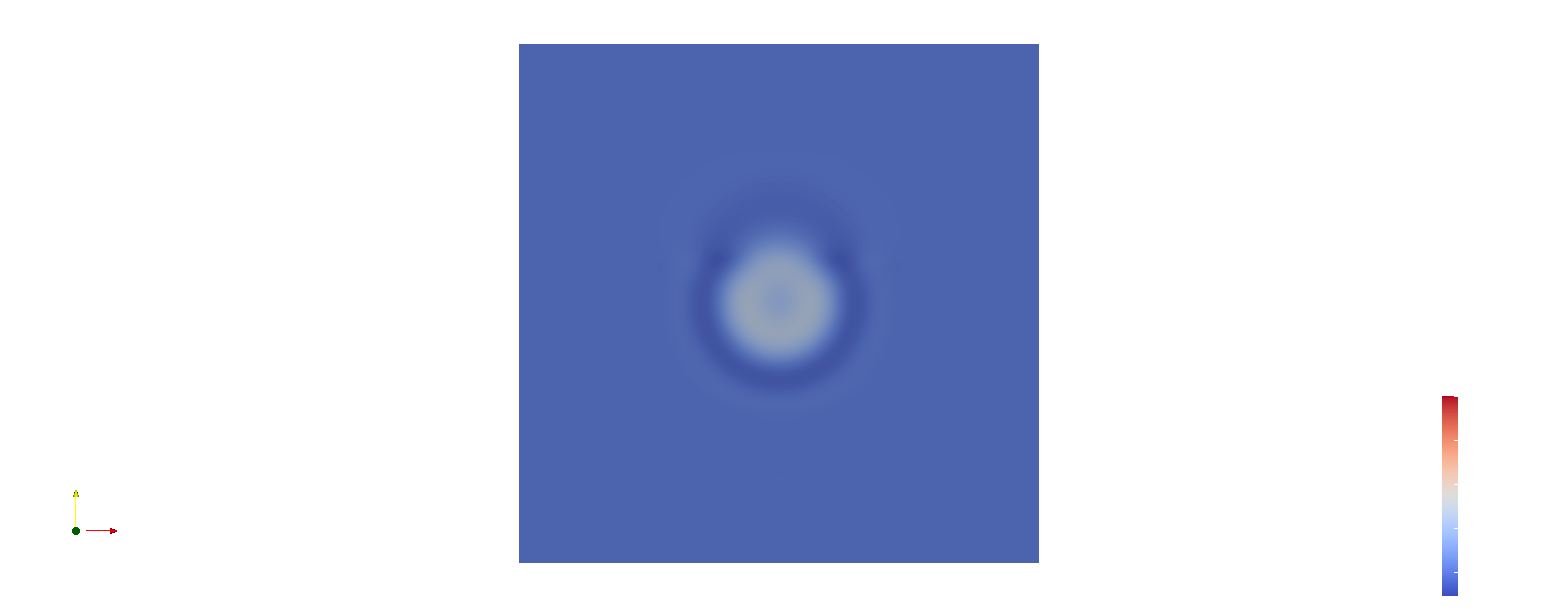} \\ 
        \includegraphics[clip, trim=515 40 515 40, width=3.5cm]{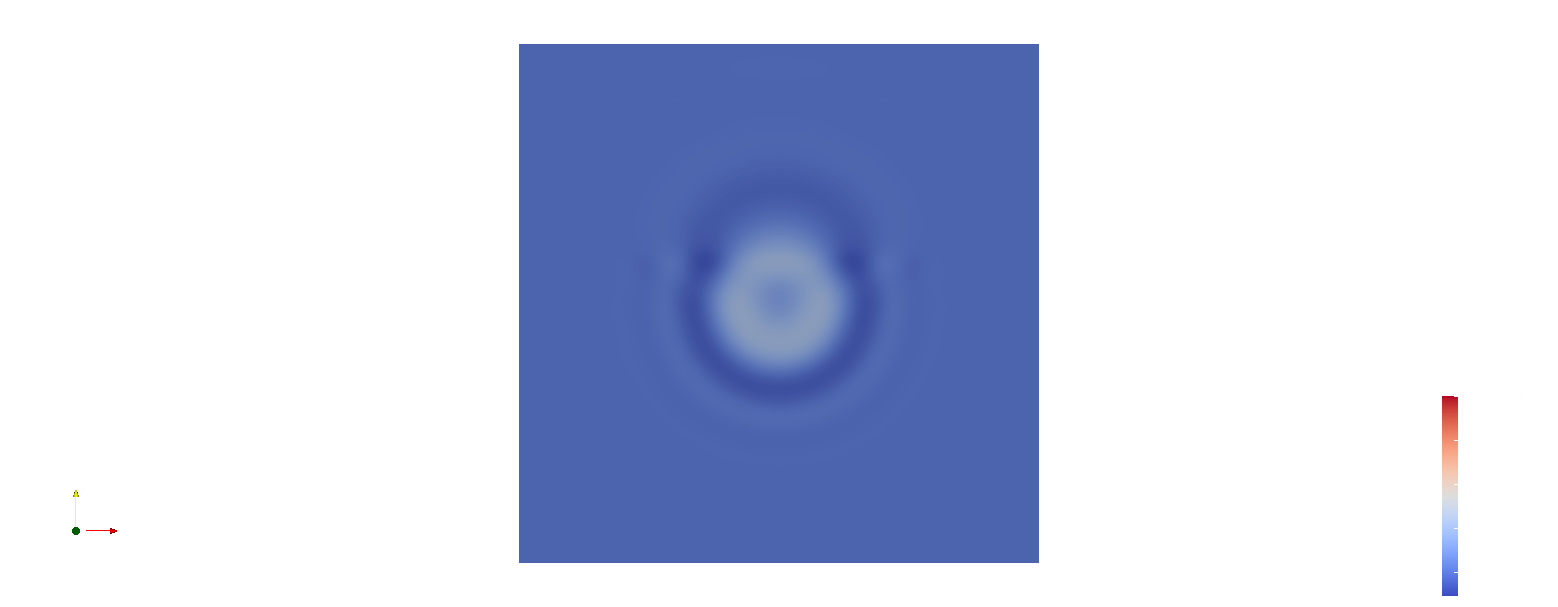} & \includegraphics[clip, trim=515 40 515 40, width=3.5cm]{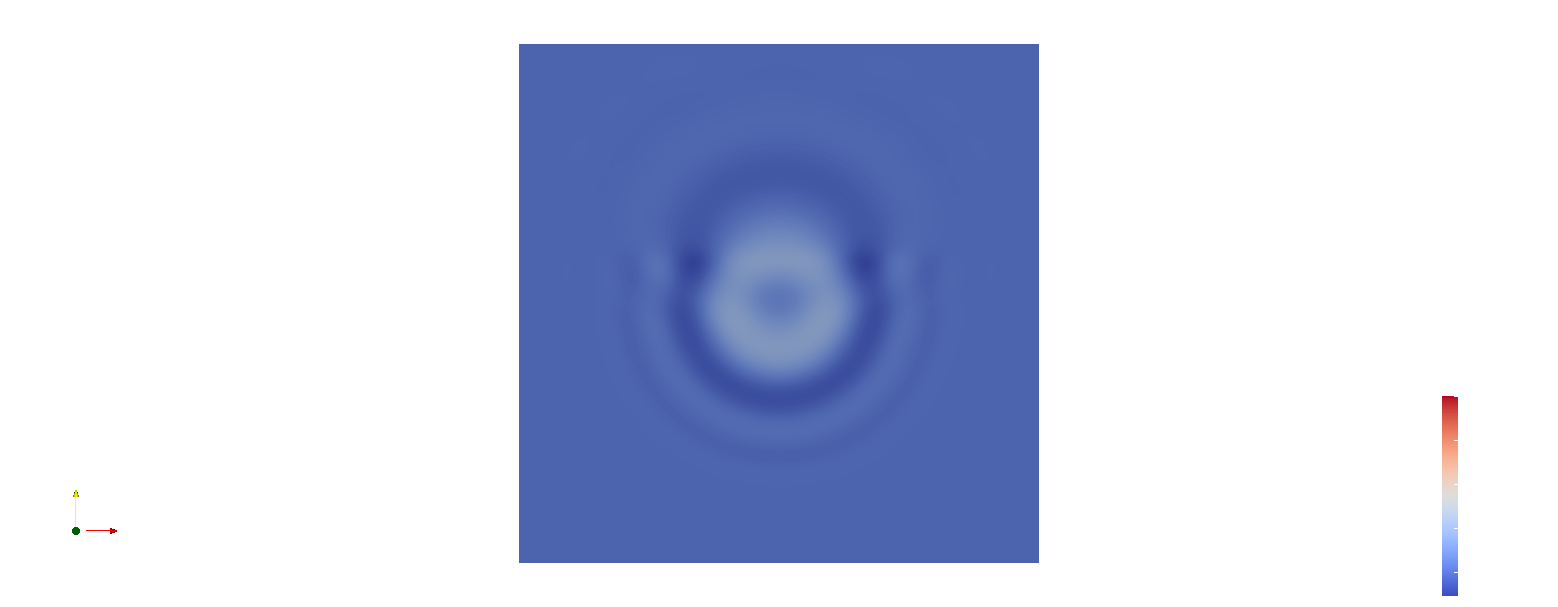} & \includegraphics[clip, trim=515 40 515 40, width=3.5cm]{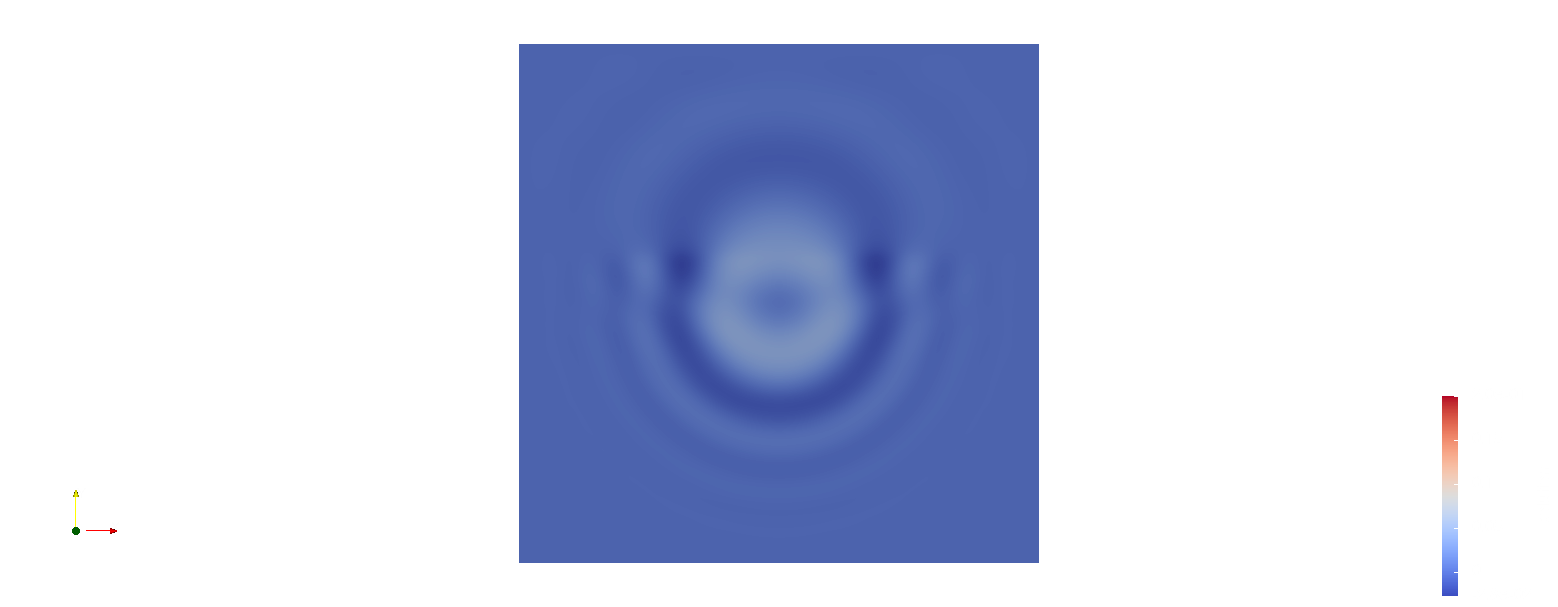} & \includegraphics[clip, trim=515 40 515 40, width=3.5cm]{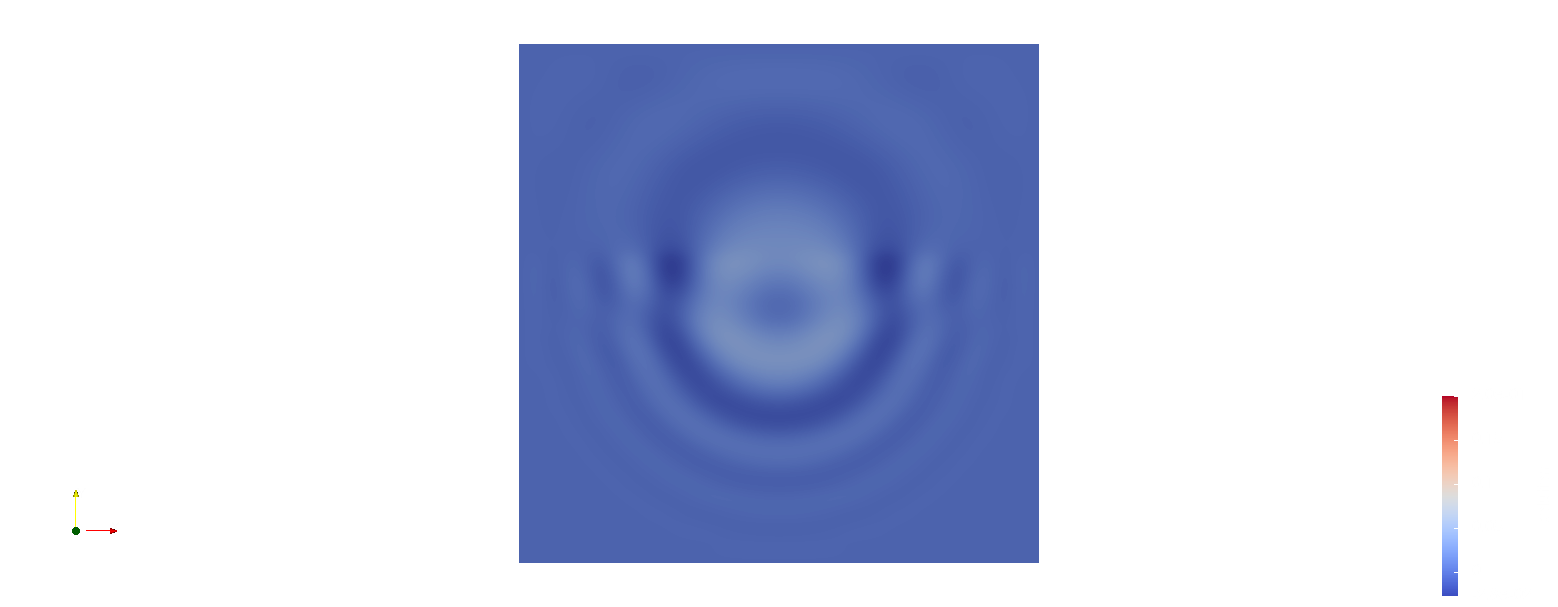}
    \end{tabular}
    \caption{First few time steps of the numerical solution of Problem \eqref{eq:structure_health} using the cGP-C$^1(3)$ method with $ \tau = 0.01 $.}
    \label{fig:structure_health}
\end{figure}
We define the control region $ \Omega_c = (0.75 - l_c, 0.75 + l_c) \times (- l_c, l_c) $ with $ l_c = 1/32 $ that simulates a senor and evaluate the expression
\begin{align}\label{eq:control_quantity}
    u_c(t) = \int_{\Omega_c} u_{\tau, h}(\vec{x}, t) \dx
\end{align}
there.
Our first aim is to examine and compare the respective signals at the sensor  for the three time discretization methods considered using different time step sizes.
All calculations were performed with a fixed $ 32 \times 32 $ spatial mesh discretization and a direct solver for the linear system of equations. In the following, we use the time step size $ \tau = 1/n_t $, where $ n_t $ denotes the number of time intervals.
The corresponding evaluations of the control quantity are depicted in Figure~\ref{fig:control_quantity}.

\begin{figure}[htbp]
    \centering
    \begin{tabular}{c}
        \includegraphics[width=13cm]{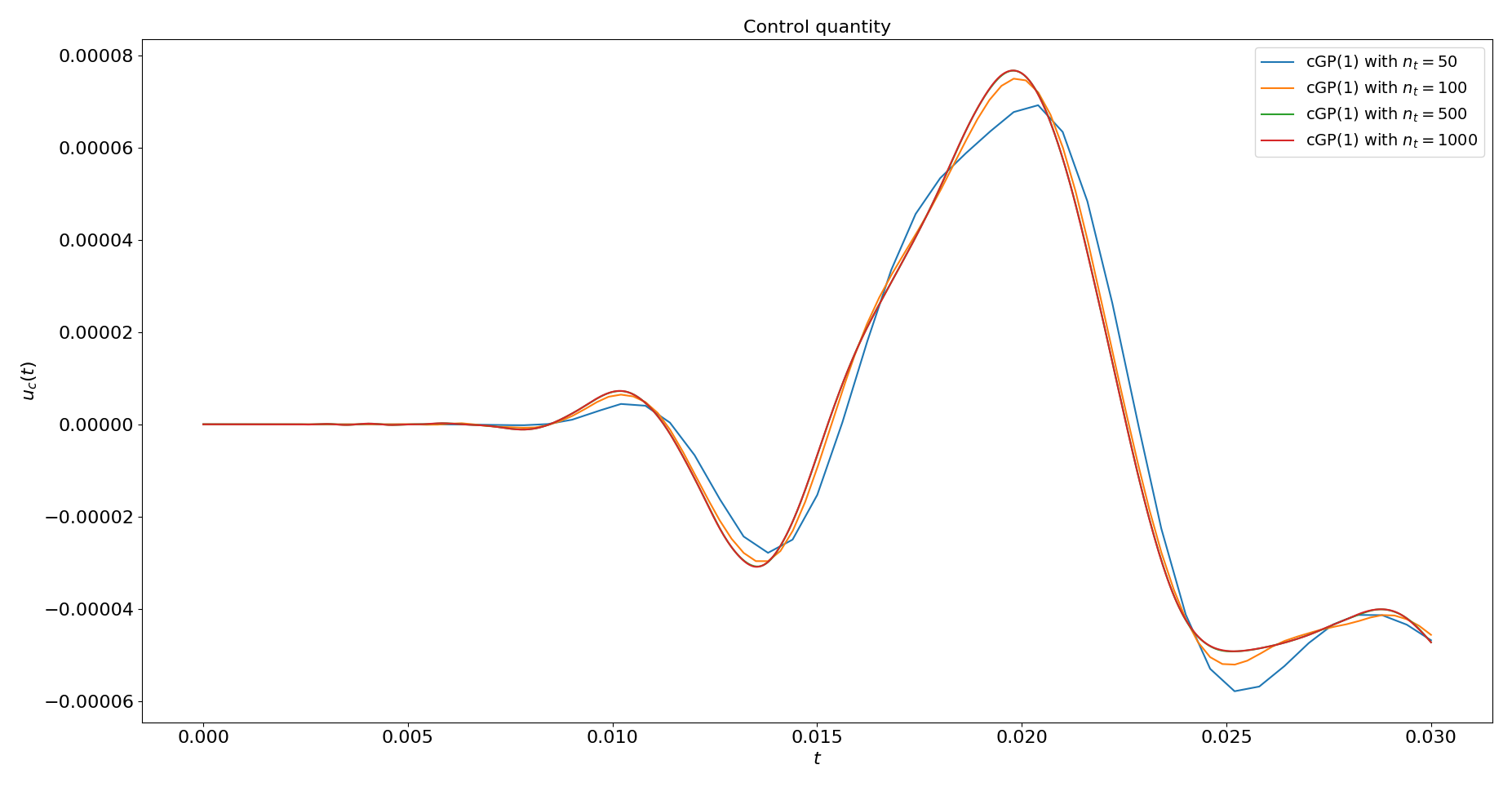} \\
        \includegraphics[width=13cm]{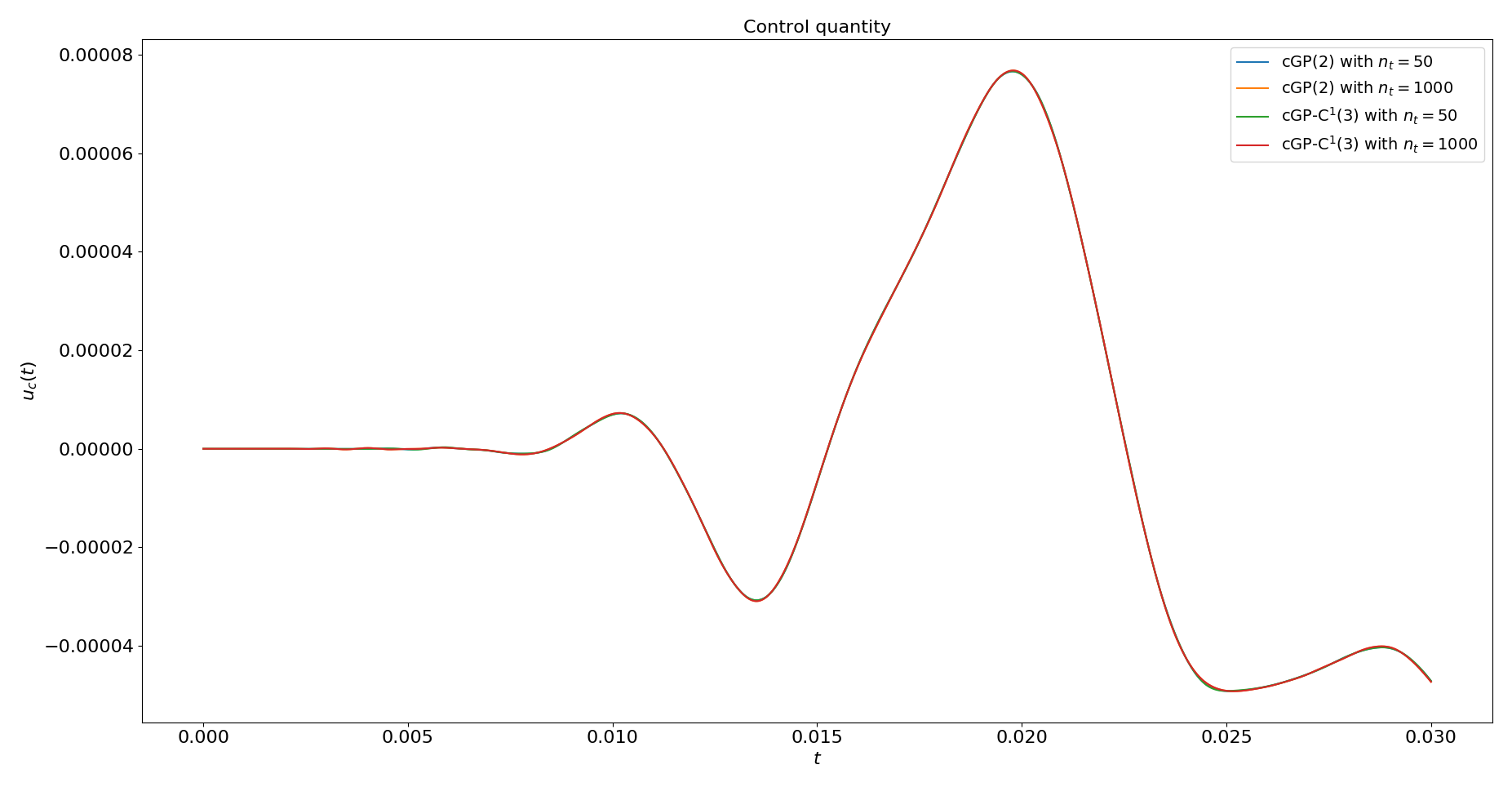}
    \end{tabular}
    \caption{Evaluation of the control quantity \eqref{eq:control_quantity} for the Crank--Nicolson (cGP(1)-), cGP(2)- and cGP-C$^1(3)$-method with different time step sizes.}
    \label{fig:control_quantity}
\end{figure}

The Crank--Nicolson (cGP(1)-) method has lower accuracy in the control quantity~\eqref{eq:control_quantity}, as can be seen in the upper graph in Figure~\ref{fig:control_quantity}. For large time step sizes, the curve of the control quantity deviates significantly from the curve we obtain for small time steps.
In this example, the cGP(2)- and cGP-C$^1(3)$-method have comparable accuracy, as shown in the bottom graph in Figure~\ref{fig:control_quantity}.
Furthermore, we observe that both methods have high accuracy even for large time steps.

Next, we compare the number of non-zero entries in the system matrix resulting from each time discretization scheme. The number of non-zero entries (\texttt{nze}) as well as the number of degrees of freedom (\texttt{dof}) for the Crank--Nicolson, cGP(2)- and cGP-C$^1(3)$-method are summarized in Table~\ref{tab:nze}.
The Crank--Nicolson method has fewer degrees of freedom and a smaller number of non-zero entries, but provides worse accuracy as already seen in the above example.
Although the system matrices of the cGP(2)- and cGP-C$^1(3)$-method have the same size, the cGP-C$^1(3)$-method has fewer non-zero entries in the system matrix.
This reduced number of non-zero entries is especially advantageous for the application of iterative solvers.
We recall that both methods have the same order of convergence in the discrete time points as seen in the previous section.
In \cite{AB20_2}, iterative solvers are applied for solving the linear systems of the cGP-C$^1(3)$ approximation of the wave equation. In this case, a strong superiority of the cGP-C$^1(3)$ approach over the cGP(2) one is observed with respect to accuracy and runtime of the simulations.

\begin{table}[htbp]
    \centering
    \begin{tabular}{rrrrrrr}
        \hline \multicolumn{1}{c}{\multirow{2}{*}{$ \vert \Tau_h \vert $}} & \multicolumn{2}{c}{cGP(1)} & \multicolumn{2}{c}{cGP(2)} & \multicolumn{2}{c}{cGP-C$^1$(3)} \\
         & \multicolumn{1}{c}{\texttt{dof}} & \multicolumn{1}{c}{\texttt{nze}} & \multicolumn{1}{c}{\texttt{dof}} & \multicolumn{1}{c}{\texttt{nze}} & \multicolumn{1}{c}{\texttt{dof}} & \multicolumn{1}{c}{\texttt{nze}} \\
        \hline
        256 & 2312 & 1.4e+05 & 4624 & 5.0e+05 & 4624 & 3.6e+05 \\
        1024 & 8712 & 5.7e+05 & 17424 & 2.0e+06 & 17424 & 1.4e+06 \\
        4096 & 33800 & 2.3e+06 & 67600 & 7.9e+06 & 67600 & 5.7e+06 \\
        16384 & 133128 & 9.1e+06 & 266256 & 3.2e+07 & 266256 & 2.3e+07 \\
        65536 & 528392 & 3.7e+07 & 1056784 & 1.3e+08 & 1056784 & 9.3e+07 \\
        262144 & 2105352 & 1.5e+08 & 4210704 & 5.2e+08 & 4210704 & 3.8e+08 \\
        \hline
    \end{tabular}
    \caption{Number of non-zero entries in the respective system matrix and number of degrees of freedom for the Crank--Nicolson (cGP(1)-), cGP(2)- and cGP-C$^1(3)$-method using different spatial refinements.}
    \label{tab:nze}
\end{table}


\section{Acknowledgement}

The second and third authors acknowledge the support of this work by the Deutsche Forschungsgemeinschaft
(DFG, German Research Foundation) as part of the project ``Physics-oriented solvers for multicompartmental
poromechanics'' under grant number 456235063.

\clearpage
\bibliographystyle{plain}
\bibliography{literature}
\clearpage
\appendix

\section{Discrete formulation of the Galerkin--collocation}
We consider \eqref{eq:collocation} with $ k = 3 $ which is the easiest case of Galerkin--collocation.
We define the reference time interval $ \hat{I} = [0, 1] $ and the reference element transformation $ F_{I_{n}} $ along with its inverse
\begin{alignat*}{2}
    F_{I_{n}}: \hat{I} & \to \closure{I}_{n}, & \qquad F_{I_{n}}(\hat{t})
    & = \tau_{n} \hat{t} + t_{n-1}, \\
    F_{I_{n}}^{-1}: \closure{I}_{n} & \to \hat{I}, & \qquad F_{I_{n}}^{-1}(t)
    & = \frac{t - t_{n - 1}}{\tau_{n}}.
\end{alignat*}
Furthermore, we define the mass matrix $ \mat{M} \in \mathbb{R}^{J \times J} $ and the operator matrix $ \mat{A} \in \mathbb{R}^{J \times J} $
\begin{alignat}{2}
    \mat{M}_{ij} & = \int_{\Omega} \varphi_{i} \, \varphi_{j} \dx, & \quad & i, j = 1, \ldots, J, \label{eq:mass_matrix} \\
    \mat{A}_{ij} & = \int_{\Omega} \laplace \varphi_{i} \, \laplace \varphi_{j} \dx, & \quad & i, j = 1, \ldots, J, \label{eq:biharm_matrix}
\end{alignat}
where $ \varphi_{i}, \, i = 1, \ldots, J $ are the global space basis functions of $ V_{h} $.
Consider the one dimensional 
Bogner--Fox--Schmit element for the time discretization, then the temporal basis functions 
$ \{\hat{\xi}_{k}\}_{k=0}^{3} \subset \mathbb{P}_{3}(\hat{I}; \mathbb{R}) $ on the reference element are given as 
\begin{align}
    \label{eq:time_ref_base}
    \begin{aligned}
        \hat{\xi}_{0}(\hat{t}) & = 1 - 3 \hat{t}^{2} + 2 \hat{t}^{3}, & \qquad
        \hat{\xi}_{1}(\hat{t}) & = \hat{t} - 2 \hat{t}^{2} + \hat{t}^{3}, \\
        \hat{\xi}_{2}(\hat{t}) & = 3 \hat{t}^{2} - 2 \hat{t}^{3}, & \qquad
        \hat{\xi}_{3}(\hat{t}) & = - \hat{t}^{2} + \hat{t}^{3}.
    \end{aligned}
\end{align}
We obtain the basis 
$ \{\xi_{k}\}_{k=0}^{3} \subset \mathbb{P}_{3}(\closure{I}_{n}; \mathbb{R}) $ on the time interval $ I_{n} $ by
\begin{align}\label{eq:mapped_basis}
    \xi_{k}(t) = \hat{\xi}_{k}\left(F_{I_{n}}^{-1}(t)\right) \quad \forall t \in \closure{I}_{n}, \, k = 0, \ldots, 3.
\end{align}
For the derivation of the Galerkin--collocation we require the integrals of the following basis functions 
\begin{gather}
    \label{eq:gcint}
        \int_{I_{n}} \xi_{0} \dt = \frac{\tau_{n}}{2}, \qquad
        \int_{I_{n}} \xi_{1} \dt = \frac{\tau_{n}}{12}, \qquad
        \int_{I_{n}} \xi_{2} \dt = \frac{\tau_{n}}{2}, \qquad
        \int_{I_{n}} \xi_{3} \dt = - \frac{\tau_{n}}{12},
\end{gather}
and also the integrals of the time derivatives 
\begin{gather}
    \label{eq:gcdiffint}
        \int_{I_{n}} \partial_{t} \xi_{0} \dt = -1, \qquad
        \int_{I_{n}} \partial_{t} \xi_{1} \dt = 0, \qquad
        \int_{I_{n}} \partial_{t} \xi_{2} \dt = 1, \qquad
        \int_{I_{n}} \partial_{t} \xi_{3} \dt = 0.
\end{gather}
For the discrete functions $ w_{\tau, h} \in \mathbb{P}_{3}(\closure{I}_{n}; V_{h}) $ we use the ansatz
\begin{align}
    \label{eq:varbasis}
    \begin{aligned}
        w_{\tau, h}(\vec{x}, t) & = \sum_{k = 0}^{3} \sum_{j = 1}^{J} w_{n, k, j} \, \varphi_{j}(\vec{x}) \, \xi_{k}(t) 
        = \sum_{k = 0}^{3} w_{n, k}(\vec{x}) \, \xi_{k}(t) \qquad \forall (\vec{x}, t) \in \Omega \times \closure{I}_{n}
    \end{aligned}
\end{align}
with the space dependent coefficient functions 
$
    w_{n, k}(\vec{x}) = \sum_{j = 1}^{J} w_{n, k, j} \, \varphi_{j}(\vec{x}) \in V_{h}
    $
and the constants $ w_{n,k,j} \in \mathbb{R} $. 
For the test functions from $ \mathbb{P}_{0}(\closure{I}_{n}; V_{h}) $ we use the basis 
\begin{align*}
    \mathcal{B} = \{\varphi_{1} \zeta_{0}, \ldots, \varphi_{J} \zeta_{0} \},
\end{align*}
where $ \zeta_{0} \equiv 1 $ on $ \closure{I}_{n} $. 
To evaluate the source term $f$ in equation~ \eqref{eq:gcpde}, we use the time interpolant $ I_{\tau}^H f $, 
which is given on the interval $ I_{n} $ by 
\begin{align}
    \label{eq:interpolator}
    \begin{aligned}
        I_{\tau}^H\restr{I_{n}} f(t) & = f\restr{I_{n}}(t_{n-1}) \, \xi_{0}(t) + \tau_{n} \partial_{t} f\restr{I_{n}}(t_{n-1}) \, \xi_{1}(t) \\
        & \quad + f\restr{I_{n}}(t_{n}) \, \xi_{2}(t) + \tau_{n} \partial_{t} f\restr{I_{n}}(t_{n}) \, \xi_{3}(t) \qquad \forall t \in \closure{I}_{n}.
    \end{aligned}
\end{align} 
There $ f\restr{I_{n}}(t_{n-1}) $ and $ f\restr{I_{n}}(t_{n}) $ are to be understood as the corresponding one sided limit values. 

We want now to derive the discrete formulation of the Galerkin--collocation \eqref{eq:collocation} for the 
dynamic plate vibration problem \eqref{eq:instationary_plate}. 
To do this, we consider first equation~\eqref{eq:gcpde}. 
Due to the definition of the discrete operator $ {A}_{h} $ from \eqref{A_h}, it is equivalent to
\begin{align*}
    \int_{I_{n}} \int_{\Omega} \partial_{t} u^1_{\tau, h} \, \varphi^1_{\tau, h} \dx \dt
    + \int_{I_{n}} \int_{\Omega} \laplace u^0_{\tau,h} \, \laplace \varphi^1_{\tau, h} \dx \dt
    = \int_{I_{n}} \int_{\Omega} f \, \varphi^1_{\tau, h} \dx \dt.
\end{align*}
Insertion of the basis representation~\eqref{eq:varbasis} for $ (u^0_{\tau, h}, u^1_{\tau, h}) \in \mathbb{P}_{3}(\closure{I}_{n}; V_{h})^{2} $ 
and approximation of the source term function $ f $ with the interpolant $ I_{\tau}^H f $ yields 
\begin{align}
    \label{eq:derpde}
    \begin{aligned}
        & \int_{I_{n}} \int_{\Omega} \partial_{t} \left( \sum_{k=0}^{3} u^1_{n,k} \, \xi_{k} \right) \, \varphi^1_{\tau, h} \dx \dt
        + \int_{I_{n}} \int_{\Omega} \laplace \left( \sum_{k=0}^{3} u^0_{n,k} \, \xi_{k} \right) \, \laplace \varphi^1_{\tau, h} \dx \dt \\
        &\quad = \int_{I_{n}} \int_{\Omega} I_{\tau}^H f \ \varphi^1_{\tau, h} \dx \dt.
    \end{aligned}
\end{align}
Subsequent separation of the time-dependent and space-dependent functions together with \eqref{eq:gcdiffint} results in 
\begin{align*}
    \int_{I_{n}} \int_{\Omega} \partial_{t} \left( \sum_{k=0}^{3} u^1_{n,k} \, \xi_{k} \right) \, \varphi^1_{\tau, h} \dx \dt
    & = \sum_{k=0}^{3} \left( \int_{I_{n}} \partial_{t} \xi_{k} \dt \int_{\Omega}  u^1_{n,k} \, \varphi^1_{\tau, h} \dx \right) \\
    & 
    = \int_{\Omega} \left( - u^1_{n,0} + u^1_{n,2} \right) \, \varphi^1_{\tau, h} \dx
\end{align*}
for the first term. Analogously, application of~\eqref{eq:gcint} results in  
\begin{align*}
    & \int_{I_{n}} \int_{\Omega} \laplace \left( \sum_{k=0}^{3} u^0_{n,k} \, \xi_{k} \right) \, \laplace \varphi^1_{\tau, h} \dx \dt 
    = \sum_{k=0}^{3} \left( \int_{I_{n}} \xi_{k} \dt \int_{\Omega} \laplace u^0_{n,k} \, \laplace \varphi^1_{\tau, h} \dx \right) \\
    & \quad 
    = \int_{\Omega} \laplace \left( \frac{\tau_{n}}{2} u^0_{n,0} + \frac{\tau_{n}}{12} u^0_{n,1} + \frac{\tau_{n}}{2} u^0_{n,2} - \frac{\tau_{n}}{12} u^0_{n,3} \right) \, \laplace \varphi^1_{\tau, h} \dx
\end{align*}
for the second term. 
On the right-hand side of~\eqref{eq:derpde} we obtain with the definition of the interpolant~\eqref{eq:interpolator} in the same manner
\begin{align*}
    & \int_{I_{n}} \int_{\Omega} I_{\tau}^H f \ \varphi^1_{\tau, h} \dx \dt \\
    & \quad = \int_{I_{n}} \int_{\Omega} \left( f(t_{n-1}) \, \xi_{0} + \tau_{n} \partial_{t} f(t_{n-1}) \, \xi_{1} + f(t_{n}) \, \xi_{2} + \tau_{n} \partial_{t} f(t_{n}) \, \xi_{3} \right) \, \varphi^1_{\tau, h} \dx \dt \\
    & \quad = \int_{\Omega} \left( \frac{\tau_{n}}{2} f(t_{n-1}) + \frac{\tau_{n}^{2}}{12} \partial_{t} f(t_{n-1}) + \frac{\tau_{n}}{2} f(t_{n}) - \frac{\tau_{n}^{2}}{12} \partial_{t} f(t_{n}) \right) \, \varphi^1_{\tau, h} \dx.
\end{align*}

Next, we consider equation~\eqref{eq:gcvelo} 
\begin{align}
    \label{eq:gcvelofinal}
    \int_{I_{n}} \int_{\Omega} \partial_{t} u^0_{\tau, h} \, \varphi^0_{\tau, h} \dx \dt - \int_{I_{n}} \int_{\Omega} u^1_{\tau, h} \, \varphi^0_{\tau, h} \dx \dt = 0.
\end{align}
Again we employ the basis representation~\eqref{eq:varbasis} and obtain 
\begin{align*}
    \begin{aligned}
        \int_{I_{n}} \int_{\Omega} \partial_{t} u^0_{\tau, h} \, \varphi^0_{\tau, h} \dx \dt
        & = \int_{I_{n}} \int_{\Omega} \partial_{t} \left( \sum_{k=0}^{3} u^0_{n,k} \, \xi_{k} \right) \, \varphi^0_{\tau, h} \dx \dt 
        = \int_{\Omega} \left( - u^0_{n,0} + u^0_{n,2} \right) \, \varphi^0_{\tau, h} \dx
    \end{aligned}
\end{align*}
for the first term and 
\begin{align*}
    \begin{aligned}
        \int_{I_{n}} \int_{\Omega} u^1_{\tau, h} \, \varphi^0_{\tau, h} \dx \dt
        & =\int_{I_{n}} \int_{\Omega} \left( \sum_{k=0}^{3} u^1_{n,k} \, \xi_{k} \right) \, \varphi^0_{\tau, h} \dx \dt \\
        & = \int_{\Omega} \left( \frac{\tau_{n}}{2} u^1_{n,0} + \frac{\tau_{n}}{12} u^1_{n,1} + \frac{\tau_{n}}{2} u^1_{n,2} - \frac{\tau_{n}}{12} u^1_{n,3} \right) \, \varphi^0_{\tau, h} \dx
    \end{aligned}
\end{align*}
for the second, where we have again used~\eqref{eq:gcint} and \eqref{eq:gcdiffint}.

Now we analyze the collocation condition~\eqref{eq:gcpdet}. 
We multiply the equation with a test function $ \varphi^3_{\tau, h} \in V_{h} $ and integrate over the space domain. 
Using the definition of $A_{h} $, \eqref{A_h}, condition~\eqref{eq:gcpdet} 
is then equivalent to
\begin{align}
    \label{eq:gcpdet1}
    \int_{\Omega} \partial_{t} u^1_{\tau, h}(t_{n}) \, \varphi^3_{\tau, h} + \laplace u^0_{\tau, h}(t_{n}) \, \laplace \varphi^3_{\tau, h} \dx & = \int_{\Omega} f(t_{n}) \, \varphi^3_{\tau, h} \dx.
\end{align}

Next, let us take a closer look at the function evaluations. Using~\eqref{eq:varbasis}, \eqref{eq:time_ref_base},~\eqref{eq:mapped_basis} 
and the chain rule we obtain 
\begin{align}
    \label{eq:diffrepr}
    \begin{aligned}
        \partial_{t} w_{\tau, h}(t_{n}) & = \sum_{k=0}^{3} w_{n, k}(\vec{x}) \, \partial_{t} \xi_{k}(t_{n})
        = \sum_{k=0}^{3} w_{n, k}(\vec{x}) \, \partial_{\hat{t}} \hat{\xi}_{k}\left(F_{I_{n}}^{-1}(t_{n})\right) \frac{1}{\tau_{n}} \\
        & = \sum_{k=0}^{3} w_{n, k}(\vec{x}) \, \partial_{\hat{t}} \hat{\xi}_{k}(1) \frac{1}{\tau_{n}}
        = \frac{1}{\tau_{n}} w_{n, 3}(\vec{x})
    \end{aligned}
\end{align}
for $ w_{\tau, h} \in \mathbb{P}_{3}(\closure{I}_{n}; V_{h}) $. The same procedure delivers also the identity 
\begin{align}
    \label{eq:repr}
    \begin{aligned}
        w_{\tau, h}(t_{n}) & = \sum_{k=0}^{3} w_{n, k}(\vec{x}) \, \xi_{k}(t_{n})
        = \sum_{k=0}^{3} w_{n, k}(\vec{x}) \, \hat{\xi}_{k}\left(F_{I_{n}}^{-1}(t_{n})\right) \\
        & = \sum_{k=0}^{3} w_{n, k}(\vec{x}) \, \hat{\xi}_{k}(1)
        = w_{n, 2}(\vec{x}).
    \end{aligned}
\end{align}
Inserting~\eqref{eq:diffrepr} and \eqref{eq:repr} in~\eqref{eq:gcpdet1} thus results in 
\begin{align}
    \label{eq:gcpdetfinal}
    \int_{\Omega} \frac{1}{\tau_{n}} u^1_{n, 3} \, \varphi^3_{\tau, h} + \laplace u^0_{n, 2} \, \laplace \varphi^3_{\tau, h} \dx & = \int_{\Omega} f(t_{n}) \, \varphi^3_{\tau, h} \dx.
\end{align}

The collocation condition~\eqref{eq:gcvelot} reads as 
\begin{align*}
    \partial_{t} u^0_{\tau, h}(t_{n}) - u^1_{\tau, h}(t_{n}) = 0.
\end{align*}
This equation is multiplied with a test function $ \varphi^2_{\tau, h} \in V_{h} $ and 
integrated over the space domain $ \Omega $ to obtain the equation 
\begin{align}
    \label{eq:gcvelot1}
    \int_{\Omega} \partial_{t} u^0_{\tau, h}(t_{n}) \, \varphi^2_{\tau, h} - u^1_{\tau, h}(t_{n}) \, \varphi^2_{\tau, h} \dx & = 0.
\end{align}
After that we use the identities~\eqref{eq:diffrepr} -- \eqref{eq:repr} and insert them in equation~\eqref{eq:gcvelot1}. 
With this we conclude 
\begin{align}
    \label{eq:gcvelotfinal}
    \int_{\Omega} \frac{1}{\tau_{n}} u^0_{n,3} \, \varphi^2_{\tau, h} - u^1_{n,2} \, \varphi^2_{\tau, h} \dx & = 0.
\end{align}

Overall, \eqref{eq:derpde}, \eqref{eq:gcvelofinal}, \eqref{eq:gcpdetfinal} and \eqref{eq:gcvelotfinal} lead to the system: 
\begin{subequations}
    \label{eq:pde_system}
    \begin{align}
        \begin{aligned}
            & 
            \int_{\Omega} \left( - u^0_{n,0} + u^0_{n,2} \right) \, \varphi^0_{\tau, h} \dx 
            \\
            & \quad 
            - \int_{\Omega} \left( \frac{\tau_{n}}{2} u^1_{n,0} + \frac{\tau_{n}}{12} u^1_{n,1} + \frac{\tau_{n}}{2} u^1_{n,2} - \frac{\tau_{n}}{12} u^1_{n,3} \right) \, \varphi^0_{\tau, h} \dx = 0
        \end{aligned}
    \end{align}
    \begin{align}
        \begin{aligned}
            & \int_{\Omega} \left( - u^1_{n,0} + u^1_{n,2} \right) \, \varphi^1_{\tau, h} \dx \\
            & \quad + \int_{\Omega} \laplace \left( \frac{\tau_{n}}{2} u^0_{n,0} + \frac{\tau_{n}}{12} u^0_{n,1} + \frac{\tau_{n}}{2} u^0_{n,2} - \frac{\tau_{n}}{12} u^0_{n,3} \right) \, \laplace \varphi^1_{\tau, h} \dx \\
            & = \int_{\Omega} \left( \frac{\tau_{n}}{2} f(t_{n-1}) + \frac{\tau_{n}^{2}}{12} \partial_{t} f(t_{n-1}) + \frac{\tau_{n}}{2} f(t_{n}) - \frac{\tau_{n}^{2}}{12} \partial_{t} f(t_{n}) \right) \, \varphi^1_{\tau, h} \dx
        \end{aligned}
    \end{align}
    \begin{align}
        \begin{aligned}
            \int_{\Omega} \frac{1}{\tau_{n}} u^0_{n,3} \, \varphi^2_{\tau, h} - u^1_{n,2} \, \varphi^2_{\tau, h} \dx & = 0
        \end{aligned}
    \end{align}
    \begin{align}
        \begin{aligned}
            \int_{\Omega} \frac{1}{\tau_{n}} u^1_{n, 3} \, \varphi^3_{\tau, h} + \laplace u^0_{n, 2} \, \laplace \varphi^3_{\tau, h} \dx & = \int_{\Omega} f(t_{n}) \, \varphi^3_{\tau, h} \dx
        \end{aligned}
    \end{align}
\end{subequations}
The remaining collocation conditions~\eqref{eq:gcu} and \eqref{eq:gcv} yield the identities 
\begin{align*}
    u^0_{n, 0} = u^0_{n-1, 2}, \qquad
    u^0_{n, 1} = u^0_{n-1, 3}, \qquad 
    u^1_{n, 0} = u^1_{n-1, 2}, \qquad
    u^1_{n, 1} = u^1_{n-1, 3},
\end{align*}
thereby reducing the number of unknowns. 

Finally, we bring all known terms in~\eqref{eq:pde_system} to the right side and on the time interval 
$ [t_{n-1}, t_{n}] $ search for the coefficient functions $ \left(u^0_{n,2}, u^0_{n,3}, u^1_{n,2}, u^1_{n,3}\right) \in V_{h}^{4} $ 
from the basis representation~\eqref{eq:varbasis}, which satisfy 
\begin{subequations}
    \label{eq:final_pde_system}
    \begin{align}
        \begin{aligned}
            & 
            \int_{\Omega} u^0_{n,2} \, \varphi^0_{\tau, h} \dx
            - \int_{\Omega} \left( \frac{\tau_{n}}{2} u^1_{n,2} - \frac{\tau_{n}}{12} u^1_{n,3} \right) \, \varphi^0_{\tau, h} \dx \\
            & \quad 
            = \int_{\Omega} u^0_{n,0} \, \varphi^0_{\tau, h} \dx
            + \int_{\Omega} \left( \frac{\tau_{n}}{2} u^1_{n,0} + \frac{\tau_{n}}{12} u^1_{n,1} \right) \, \varphi^0_{\tau, h} \dx
        \end{aligned}
    \end{align}
    \vspace{-2ex}
    \begin{align}
        \begin{aligned}
            &  \int_{\Omega} u^1_{n,2} \, \varphi^1_{\tau, h} \dx
            + \int_{\Omega} \laplace \left( \frac{\tau_{n}}{2} u^0_{n,2} - \frac{\tau_{n}}{12} u^0_{n,3} \right) \, \laplace \varphi^1_{\tau, h} \dx \\
            & \quad = \int_{\Omega} \left( \frac{\tau_{n}}{2} f(t_{n-1}) + \frac{\tau_{n}^{2}}{12} \partial_{t} f(t_{n-1}) + \frac{\tau_{n}}{2} f(t_{n}) - \frac{\tau_{n}^{2}}{12} \partial_{t} f(t_{n}) \right) \, \varphi^1_{\tau, h} \dx \\
            & \qquad + \int_{\Omega} u^1_{n,0} \, \varphi^1_{\tau, h} - \laplace \left( \frac{\tau_{n}}{2} u^0_{n,0} + \frac{\tau_{n}}{12} u^0_{n,1} \right) \, \laplace \varphi^1_{\tau, h} \dx
        \end{aligned}
    \end{align}
    \vspace{-2ex}
    \begin{align}
        \begin{aligned}
            \int_{\Omega} \frac{1}{\tau_{n}} u^0_{n,3} \, \varphi^2_{\tau, h} - u^1_{n,2} \, \varphi^2_{\tau, h} \dx & = 0
        \end{aligned}
    \end{align}
    \vspace{-2ex}
    \begin{align}
        \begin{aligned}
            \int_{\Omega} \frac{1}{\tau_{n}} u^1_{n, 3} \, \varphi^3_{\tau, h} + \laplace u^0_{n, 2} \, \laplace \varphi^3_{\tau, h} \dx
            & = \int_{\Omega} f(t_{n}) \, \varphi^3_{\tau, h} \dx
        \end{aligned}
    \end{align}
\end{subequations}
for all test functions $ \left(\varphi^0_{\tau, h}, \varphi^1_{\tau, h}, \varphi^2_{\tau, h}, \varphi^3_{\tau, h}\right) \in V_{h}^{4} $.

Using the definitions~\eqref{eq:mass_matrix} and \eqref{eq:biharm_matrix} of the mass matrix $ \mat{M} $ 
and operator matrix $ \mat{A} $, respectively  
we can write the system of equations~\eqref{eq:final_pde_system} as a linear system of equations 
\begin{subequations}
    \label{eq:final_system}
    \begin{align}
        \begin{aligned}
            \mat{M} \vec{u}^0_{n, 2} - \mat{M} \left( \frac{\tau_{n}}{2} \vec{u}^1_{n, 2} - \frac{\tau_{n}}{12} \vec{u}^1_{n, 3} \right) = \mat{M} \vec{u}^0_{n, 0} + \mat{M} \left( \frac{\tau_{n}}{2} \vec{u}^1_{n, 0} + \frac{\tau_{n}}{12} \vec{u}^1_{n, 1} \right)
        \end{aligned}
    \end{align}
    \begin{align}
        \begin{aligned}
            & \quad \mat{M} \vec{u}^1_{n, 2} + \mat{A} \left( \frac{\tau_{n}}{2} \vec{u}^0_{n, 2} - \frac{\tau_{n}}{12} \vec{u}^0_{n, 3} \right) \\
            & = \mat{M} \left( \frac{\tau_{n}}{2} \vec{f}_{n, 0} + \frac{\tau_{n}^{2}}{12} \vec{f}_{n, 1} + \frac{\tau_{n}}{2} \vec{f}_{n, 2} - \frac{\tau_{n}^{2}}{12} \vec{f}_{n, 3} \right) \\
            & \quad + \mat{M} \vec{u}^1_{n, 0} - \mat{A} \left( \frac{\tau_{n}}{2} \vec{u}^0_{n, 0} + \frac{\tau_{n}}{12} \vec{u}^0_{n, 1} \right)
        \end{aligned}
    \end{align}
    \vspace{-2ex}
    \begin{align}
        \frac{1}{\tau_{n}} \mat{M} \vec{u}^0_{n, 3} - \mat{M} \vec{u}^1_{n, 2} & = \vec{0}
    \end{align}
    \vspace{-2ex}
    \begin{align}
        \begin{aligned}
            \frac{1}{\tau_{n}} \mat{M} \vec{u}^1_{n, 3} + \mat{A} \vec{u}^0_{n, 2} & = \mat{M} \vec{f}_{n, 2}.
        \end{aligned}
    \end{align}
\end{subequations}
We summarize 
the equations 
\eqref{eq:final_system} more compactly. On 
the time interval 
$ [t_{n-1}, t_{n}] $ we search for the coefficient vector 
\begin{align*}
    \vec{x} = \left( \left(\vec{u}^0_{n, 2}\right)^{\top}, \left(\vec{u}^0_{n, 3}\right)^{\top}, \left(\vec{u}^1_{n, 2}\right)^{\top}, \left(\vec{u}^1_{n, 3}\right)^{\top} \right)^{\top} \in 
    \mathbb{R}^{4J}
\end{align*}
as a solution of the linear system of equations 
\begin{align*}
    \mat{S} \vec{x} = \vec{b}
\end{align*}
with matrix 
\begin{align*}
    \mat{S} =
    \begin{pmatrix}
        \mat{M} & \mat{0} & - \frac{\tau_{n}}{2} \mat{M} & \frac{\tau_{n}}{12} \mat{M} \\ 
        \frac{\tau_{n}}{2} \mat{A} & -\frac{\tau_{n}}{12} \mat{A} & \mat{M} & \mat{0} \\ 
        \mat{0} & \frac{1}{\tau_{n}} \mat{M} & - \mat{M} & \mat{0} \\ 
        \mat{A} & \mat{0} & \mat{0} & \frac{1}{\tau_{n}} \mat{M}
    \end{pmatrix}
    \in \mathbb{R}^{4J \times 4J}
\end{align*}
and right-hand side 
\begin{align*}
    \vec{b} =
    \begin{pmatrix}
        \mat{M} \left(\vec{u}^0_{n, 0} + \frac{\tau_{n}}{2} \vec{u}^1_{n, 0} + \frac{\tau_{n}}{12} \vec{u}^1_{n, 1}\right) \\ 
        b_{n, 1} \\
        \vec{0} \\ 
        \mat{M} \vec{f}_{n, 2}
    \end{pmatrix}
    \in \mathbb{R}^{4J},
\end{align*}
where $ b_{n, 1} = \mat{M} \left( \frac{\tau_{n}}{2} \vec{f}_{n,0} + \frac{\tau_{n}^{2}}{12} \vec{f}_{n, 1} + \frac{\tau_{n}}{2} 
\vec{f}_{n,2} - \frac{\tau_{n}^{2}}{12} \vec{f}_{n,3} + \vec{u}^1_{n, 0} \right) - \mat{A} \left(\frac{\tau_{n}}{2} \vec{u}^0_{n,0} 
+ \frac{\tau_{n}}{12} \vec{u}^0_{n,1}\right) $.

\section{Supplementary material}
\label{sec:supplement}

Recall $ k \geq 3 $.
In the following, we consider Problem \eqref{eq:instationary_plate} with a Lipschitz continuous function $ f(u) $ as right-hand side.
Let $ \Tau_{hn} $ be a partition of $ \Omega $ on the time interval $ I_{n} $. Then, we define the corresponding finite element space by
\begin{align*}
    V_h^n = \set{v_h \in C^1(\Omega)}{v_h\restr{T} \in \mathbb{Q}_3(T),\,v_h\restr{\boundary \Omega} = 0, \, \partial_{\vec{n}}v_h\restr{\boundary \Omega} = 0 \ \forall T \in \Tau_{hn}} \cap H_0^2(\Omega)
\end{align*}
with maximum diameter $ h_n = \max_{K \in \Tau_{hn}} h_K $.
Moreover, we will use the spaces
\begin{align*}
    V_{k}^{n} & = \set{v: \Omega \times I_n \to \R}{v \in \mathbb{P}_{k}(I_{n}; V_h^n)}, \quad 
    V_{k} 
    = \set{v: \Omega \times \left(0, T\right] \to \R}{v\restr{I_n} \in V_k^n}.
\end{align*}
Analogous to \eqref{R_h}, we define the elliptic operator $ R_h^n: V \to V_h^n $ by
\begin{align*}
    \scalar{\laplace R_h^n w}{\laplace v_h} = \scalar{\laplace w}{\laplace v_h} \qquad \forall v_h \in V_h^n.
\end{align*}
We define $ A_h^n: V \to V_h^n $ by
\begin{equation*}
\scalar{A_h^n w}{v_h}=(\Delta w,\Delta v_h) \qquad \forall v_h \in V_h^n
\end{equation*}
and $ \mathcal{L}_h^n: V \times H \to V_h^n \times V_h^n $ by
\begin{align*}
    \mathcal{L}_h^n =
    \begin{pmatrix}
        0 & -P_h^n \\
        A_h^n & 0
    \end{pmatrix}
\end{align*}
analogous to \eqref{A_h_mathcal}, where $ P_h^n $ is the $ L^2$-projection onto $ V_h^n $. Furthermore, we define 
\begin{align*}
    \Pi^n =
    \begin{pmatrix}
        R_h^n & 0 \\
        0 & P_h^n
    \end{pmatrix}.
\end{align*}
Now we can write our problem as:
Find $ U_{\tau, h} = (u_{\tau, h}^0, u_{\tau, h}^1) \in V_k \times V_k $ satisfying
\begin{align}
    \label{eq:adaptive_plate}
    \begin{aligned}
    \int_{I_n} \multiscalar{\partial_t U_{\tau, h}}{\Phi} + \multiscalar{\mathcal{L}_h^n U_{\tau, h}}{\Phi} \dt & = \int_{I_n} \multiscalar{F_h(U_{\tau, h})}{\Phi} \dt, \quad \forall \Phi \in V_{k-1}^n \times V_{k-1}^n, \\
    U_{\tau, h}(t_{n-1}^+) & = \Pi^{n} U_{\tau, h}(t_{n-1}), \quad n = 1, \ldots, N,
    \end{aligned}
\end{align}
where $ F_h(U_{\tau, h}) = (0, P_h^{n} f(u_{\tau, h}^0)) $ and $ U_{\tau, h}(t_0) = (u_0, u_1) $.
For our analysis we also require the Gauss-Legendre quadrature 
\begin{equation}\label{2.1}
\int_{0}^1 g(\bar{\tau}) \mathrm{d}\bar{\tau} \approx \sum_{j=1}^k w_j g(\bar{\tau}_j), \quad 0< \bar{\tau}_1 < \ldots < \bar{\tau}_k <1,
\end{equation}
where $w_j$ denote the weights and $\bar{\tau}_j$, $j=1,\ldots,k$ the abscissas. 
Remember that~\eqref{2.1} is exact for all polynomials of degree smaller or equal to $2k-1$. 


We denote with $\{l_i\}_{i=1}^k$ the Lagrange polynomials of degree $k-1$ associated with $\bar{\tau}_1,\ldots,\bar{\tau}_k$ 
and with $\{\hat{l}_i\}_{i=0}^k$ the Lagrange polynomials of degree $k$ associated with the $k+1$ points 
$0=\bar{\tau}_0 < \bar{\tau}_1<\ldots < \bar{\tau}_k$. 


We map $[0,1]$ onto $\bar{I}_{n}$ via the linear transformation $t=t_{n-1} + \bar{\tau} \tau_n$ and adapt~\eqref{2.1} by defining 
its abscissas and weights as given below 
\begin{align*}
&t_{n,i}=t_{n-1}+\bar{\tau}_i \tau_n, \nonumber \\
&l_{n,i}(t) = l_i(\bar{\tau}), \quad t=t_{n-1} + \bar{\tau} \tau_n, \\
&w_{n,i}=\int_{t_{n-1}}^{t_n} l_{n,i}(t)\dt = \tau_n \int_0^1 l_i(\bar{\tau})\mathrm{d}{\bar{\tau}} = \tau_n w_i, \quad i=1,\ldots,k.\nonumber
\end{align*}

In particular, the following representation holds 
\begin{equation*}
U_{\tau,h}(\vec{x},t) = \sum_{j=0}^k \hat{l}_{n,j}(t) U_{\tau,h}^{n,j}(\vec{x}),\qquad (\vec{x},t)\in \Omega\times I_n, \; t_{n,0}=t_{n-1},
\end{equation*}
where $U_{\tau,h}^{n,j}=U_{\tau, h}(\vec{x},t_{n,j})\in V_h^n \times V_h^n$ and $U_{\tau,h}^{n,0}=U_{\tau,h}(t_{n-1}^+)=\Pi^n U_{\tau,h}(t_{n-1})$ is given.

We denote by $ t_{n, j}^{GL} $ the points and by $ w_{n, j}^{GL} $ the weights of the $ (k + 1)$-point Gauss-Lobatto quadrature formula on the interval $ I_n $, which is exact for polynomials up to degree $ 2k -1 $. Moreover, we define the associated Lagrange interpolator by $ I_{n}^{GL} $.


The following norm equivalence, see~\cite{Karakashian2005convergence}, will be used in the analysis 
\begin{equation*}
C_1 \left(\tau_n \sum_{j=0}^k \Vert v^j\Vert^2 \right)^{1/2} \le
\Vert v \Vert_{L^2(I_n;L^2)}
\le C_2 \left(\tau_n \sum_{j=0}^k \Vert v^j\Vert^2 \right)^{1/2},\quad v\in V_k
\end{equation*}
and is a consequence of 
\begin{equation}\label{2.5}
\max_{t \in I_n}\vert y(t)\vert \le C \tau_n^{-1/2} \left( \int_{I_n} \vert y(t)\vert^2 \dt \right)^{1/2}, \quad \forall y\in \mathbb{P}_k(I_n)
\end{equation}
and 
$$
\int_{I_n}\vert \hat{l}_{n,j}(t) \vert \dt \le c \tau_n,
$$
where $v=\sum_{j=0}^k \hat{l}_{n,j}v^j \in V_k^n$.

We also consider the $L^2$-projection operator $P_t^{n,k-1}:\mathbb{P}_k[t_{n-1},t_n]\rightarrow \mathbb{P}_{k-1}[t_{n-1},t_n] $ for which it holds 
\begin{equation}\label{2.6}
P_t^{n,k-1} = I^{GLe}_{n,k-1},
\end{equation}
where $I^{GLe}_{n,k-1}$ denotes the Lagrange interpolation operator corresponding to the $k$ Gauss-Legendre points 
$t_{n,1}<\ldots < t_{n,k}$.


Let $\Phi \in V_{k-1}^n \times V_{k-1}^n$, then 
\begin{align}\label{2.7}
\int_{I_n} ((U_{\tau,h})_t,\Phi) \dt = \sum_{i,j=1}^k m_{ij} (U_{\tau,h}^{n,j},\Phi^i)+ \sum_{i=1}^k m_{i0} (U_{\tau,h}^{n,0},\Phi^i),
\end{align}
where 
\begin{align*}
m_{ij} = \int_{I_n}  \hat{l}_{n,j}^{'}(t) l_{n,i}(t)\dt, \qquad i=1,\ldots,k,\; j=0,\ldots, k
\end{align*}
and $v^i = v(t_{n,i})$. 


The positivity of the matrix $\mathcal{M}=\{m_{ij}\}_{i,j=1}^k$ is crucial for the stability of the method.

The next lemma proven in~\cite{Karakashian1999aspacetime} 
demonstrates that $\tilde{\mathcal{M}}=D^{-1/2}\mathcal{M}D^{1/2}$ where $D=\text{diag}\{\bar{\tau}_1,\ldots,\bar{\tau}_k\}$ is positive definite.
\begin{lemma} \label{lemma2_1} 
For $\alpha:=\frac{1}{2}\min_{j}\frac{w_j}{\bar{\tau}_j}$ it holds 
\begin{equation*}
\vec{x}^T \mathcal{\tilde{M}}\vec{x} \ge \alpha \vert \vec{x} \vert^2= \alpha \left( \sum_{i=1}^k x_i^2 \right), \qquad \forall \vec{x} \in \mathbb{R}^k.
\end{equation*}
\end{lemma}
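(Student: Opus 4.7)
The plan is to reduce the claim to an integration-by-parts identity on the reference interval $[0,1]$. As a first step I would work out a more explicit formula for the entries of $\mathcal{M}$. Since $\hat l'_{n,j}\in\mathbb{P}_{k-1}$ and $l_{n,i}\in\mathbb{P}_{k-1}$, the integrand in $m_{ij}=\int_{I_n}\hat l'_{n,j}(t)\,l_{n,i}(t)\,dt$ has degree at most $2k-2$, so the $k$-point Gauss-Legendre rule is exact. Using $l_{n,i}(t_{n,\ell})=\delta_{i\ell}$ and mapping to $[0,1]$ this yields $m_{ij}=w_i\,\hat l'_j(\bar\tau_i)$; in particular $\mathcal{M}$ is $\tau_n$-independent, so I may work on the reference interval.

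Next I would translate the quadratic form into polynomial language. For a given $\vec x\in\mathbb{R}^k$ set $\xi_j:=x_j\sqrt{\bar\tau_j}$ and let $u\in\mathbb{P}_k[0,1]$ be the unique polynomial with $u(0)=0$ and $u(\bar\tau_j)=\xi_j$ for $j=1,\dots,k$. Then $u=\sum_{j=1}^k\xi_j\hat l_j$ and $u'(\bar\tau_i)=\sum_{j=1}^k\xi_j\hat l'_j(\bar\tau_i)$. With this notation
\begin{align*}
\vec x^{\,T}\tilde{\mathcal M}\vec x
=\sum_{i,j=1}^k\frac{x_i}{\sqrt{\bar\tau_i}}\,m_{ij}\,x_j\sqrt{\bar\tau_j}
=\sum_{i=1}^k\frac{x_i}{\sqrt{\bar\tau_i}}\,w_i\,u'(\bar\tau_i)
=\sum_{i=1}^k w_i\,\frac{u(\bar\tau_i)\,u'(\bar\tau_i)}{\bar\tau_i},
\end{align*}
where in the last step I used $u(\bar\tau_i)=x_i\sqrt{\bar\tau_i}$.

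The key step is then an algebraic decomposition. Since $u(0)=0$, I write $u(\bar\tau)=\bar\tau\,p(\bar\tau)$ with $p\in\mathbb{P}_{k-1}$, so that $g(\bar\tau):=u(\bar\tau)^2/\bar\tau=\bar\tau\,p(\bar\tau)^2\in\mathbb{P}_{2k-1}$. A direct differentiation gives $g'=p^2+2\bar\tau p p'$ and $u u'/\bar\tau=p^2+\bar\tau p p'$, hence the pointwise identity $u u'/\bar\tau=\tfrac12 g'+\tfrac12 p^2$. Substituting back,
\begin{align*}
\vec x^{\,T}\tilde{\mathcal M}\vec x
=\tfrac12\sum_{i=1}^k w_i\,g'(\bar\tau_i)+\tfrac12\sum_{i=1}^k w_i\,p(\bar\tau_i)^2 .
\end{align*}
Since $g'\in\mathbb{P}_{2k-2}$ the first sum is evaluated exactly by Gauss-Legendre, giving $g(1)-g(0)=u(1)^2\ge 0$.

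For the remaining sum I note $p(\bar\tau_i)=u(\bar\tau_i)/\bar\tau_i=x_i/\sqrt{\bar\tau_i}$, so $p(\bar\tau_i)^2=x_i^2/\bar\tau_i$ and
\begin{align*}
\vec x^{\,T}\tilde{\mathcal M}\vec x\;\ge\;\tfrac12\sum_{i=1}^k \frac{w_i}{\bar\tau_i}\,x_i^2
\;\ge\;\Bigl(\tfrac12\min_j\tfrac{w_j}{\bar\tau_j}\Bigr)\sum_{i=1}^k x_i^2=\alpha\,|\vec x|^2,
\end{align*}
which is the claim. The main obstacle is spotting the substitution $u=\bar\tau p$ together with the identity $u u'/\bar\tau=\tfrac12(u^2/\bar\tau)'+\tfrac12 p^2$; once this is in place, the exactness of the Gauss rule on $\mathbb{P}_{2k-2}$ and the homogeneous condition $u(0)=0$ make the rest a short computation, and the non-symmetry of $\tilde{\mathcal M}$ is harmless because only the symmetric part contributes to the quadratic form.
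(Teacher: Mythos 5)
Your proof is correct: the reduction $m_{ij}=w_i\,\hat l_j'(\bar\tau_i)$ via exactness of the $k$-point Gauss rule on $\mathbb{P}_{2k-2}$, the identity $uu'/\bar\tau=\tfrac12(u^2/\bar\tau)'+\tfrac12 p^2$ for $u=\bar\tau p$ with $u(0)=0$, and the exact evaluation of $\int_0^1 g'$ all check out, yielding exactly the constant $\alpha=\tfrac12\min_j w_j/\bar\tau_j$. The paper does not prove this lemma itself but defers to the cited reference of Karakashian and Makridakis, and your argument is essentially the standard proof given there, so there is nothing to add.
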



Subsequently, we will use the error splitting $ U_{\tau, h} - U = (U_{\tau, h} - W) + (W - U) $. For this purpose, we define
\begin{align}\label{omega_eta}
    \omega(\vec{x}, t) = R_{h}^n u(\vec{x}, t), \qquad
    \eta = u - \omega,
\end{align}
on $ \Omega \times I_{n} $ for all $ n = 1, \ldots, N $.  Furthermore, we define $ W = (W_0, W_1) $ by
\begin{align}
\label{W_1}
    W_1(\vec{x}, t)\restr{I_n} = I^{GL}_{n} \omega_t, \qquad
    W_1(t_0) = R_h^0 u_1,
\end{align}
and
\begin{align}\label{W_0}
    W_0(\vec{x}, t)\restr{I_n} = I^{GL}_{n} \left( \int_{t_{n-1}}^{t} W_1 \ds + \omega(t_{n-1}^+) \right).
\end{align}
Let $ Z(\vec{x}, t)\restr{I_n} = \int_{t_{n-1}}^{t} W_1 \ds + \omega(t_{n-1}^+) $ and $ \tilde{E} = \tilde{E}\restr{I_n} = U_{\tau,h} - W $.


\begin{lemma} 
\label{W_identity}
It is fulfilled that 
\begin{equation}\label{eq:3.8}
\int_{I_n} (W_{0,t},\varphi)\dt = \int_{I_n}(W_1,\varphi)\dt ,\quad \forall \varphi\in V_{k-1}.
\end{equation}
\end{lemma}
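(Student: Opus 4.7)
The plan is to reduce the identity to a statement about the interpolation remainder $R := Z - W_0 = Z - I_n^{GL} Z$ and then exploit the exactness of the $(k+1)$-point Gauss--Lobatto quadrature. First I would observe that since $W_1\restr{I_n} = I_n^{GL}\omega_t \in \mathbb{P}_k(I_n;V_h^n)$ is a polynomial of degree $k$ in time, the antiderivative defining $Z$ lies in $\mathbb{P}_{k+1}(I_n;V_h^n)$, so $R \in \mathbb{P}_{k+1}(I_n;V_h^n)$. By construction of $I_n^{GL}$, the remainder $R$ vanishes at every one of the $k+1$ Gauss--Lobatto nodes $t_{n,j}^{GL}$, $j=0,\ldots,k$; in particular $R(t_{n-1}) = R(t_n) = 0$, because the Gauss--Lobatto nodes include the interval endpoints.

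Next I would rewrite the desired identity. Since $Z_t = W_1$, we have $W_{0,t} - W_1 = (I_n^{GL}Z)_t - Z_t = -R_t$, so the claim is equivalent to
\begin{equation*}
\int_{I_n}(R_t,\varphi)\dt = 0\qquad \forall \varphi \in V_{k-1}^n.
\end{equation*}
Integrating by parts in time and using $R(t_{n-1}) = R(t_n) = 0$, this reduces to
\begin{equation*}
\int_{I_n}(R,\varphi_t)\dt = 0\qquad \forall \varphi \in V_{k-1}^n.
\end{equation*}

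The remaining step is the core of the argument. For any $\varphi\in V_{k-1}^n$ the function $\varphi_t$ lies in $\mathbb{P}_{k-2}(I_n;V_h^n)$ (which is nontrivial since $k \geq 3$). The scalar map $t \mapsto (R(\cdot,t),\varphi_t(\cdot,t))$ is therefore a polynomial in $t$ of degree at most $(k+1) + (k-2) = 2k-1$. Since the $(k+1)$-point Gauss--Lobatto rule on $I_n$ is exact on $\mathbb{P}_{2k-1}(I_n;\mathbb{R})$, I would write
\begin{equation*}
\int_{I_n}(R,\varphi_t)\dt = \sum_{j=0}^{k} w_{n,j}^{GL}\bigl(R(\cdot,t_{n,j}^{GL}),\varphi_t(\cdot,t_{n,j}^{GL})\bigr) = 0,
\end{equation*}
where the last equality holds because $R(\cdot,t_{n,j}^{GL}) = 0$ for every quadrature node. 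Combining the two reductions gives the claim.

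The main obstacle, such as it is, is bookkeeping the polynomial degrees: one has to be certain that the scalar polynomial $(R,\varphi_t)$ in $t$ really falls within the exactness range of the Gauss--Lobatto formula, and that the boundary terms in the integration by parts vanish. Both follow directly from the definitions of $W_0$ and $W_1$ and from the endpoint-inclusion property of the Gauss--Lobatto nodes, so no further ingredients are needed.
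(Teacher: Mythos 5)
Your proposal is correct and follows essentially the same route as the paper's proof: both hinge on integrating by parts in time, on the fact that $W_0=I_n^{GL}Z$ agrees with $Z$ at all Gauss--Lobatto nodes (including the endpoints, which kills the boundary terms), and on the exactness of the $(k+1)$-point Gauss--Lobatto rule on $\mathbb{P}_{2k-1}$ applied to $(Z,\varphi_t)$ resp.\ $(R,\varphi_t)$. Phrasing the argument through the remainder $R=Z-I_n^{GL}Z$ is only a cosmetic repackaging of the paper's chain of equalities.
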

\begin{proof}
    From the definition of $ Z $, we directly obtain $ W_0\restr{I_n} = I_n^{GL} Z $. 
    Using partial integration together with the definition of the Lagrange interpolator at the Gauss-Lobatto points and the exactness of the quadrature rule, we have
    \begin{align*}
        \int_{I_{n}} \scalar{W_{0, t}}{\varphi} dt
        & = - \int_{I_{n}} \scalar{W_{0}}{\varphi_{t}} dt + \scalar{Z}{\varphi}(t_{n}) - \scalar{Z}{\varphi}(t_{n-1}^+) \\
        & = - \sum_{j=0}^{k} w_{n, j}^{GL} \scalar{Z}{\varphi_{t}}(t_{n, j}^{GL}) + \scalar{Z}{\varphi}(t_{n}) - \scalar{Z}{\varphi}(t_{n-1}^+) \\
        & = - \int_{I_{n}} \scalar{Z}{\varphi_{t}} dt + \scalar{Z}{\varphi}(t_{n}) - \scalar{Z}{\varphi}(t_{n-1}^+) \\
        & = \int_{I_{n}} \scalar{Z_{t}}{\varphi} dt = \int_{I_{n}} \scalar{W_{1}}{\varphi} dt,
    \end{align*}
    since $ \scalar{Z}{\varphi_t} \in \mathbb{P}_{2k-1} $.
\end{proof}

\begin{lemma}\label{lemma_3.2} 
We define
$$
\begin{array}{rcl}
A_I^n & := & I_{n}^{GL}\left(\int_{t_{n-1}}^t(I-I_{n}^{GL})u_t\ds\right), \\
A_{I\!I}^n & := & u_{tt}-W_{1,t}, \\
A_{I\!I\!I}^n & := &f(W_0)-f(u)-(I_{n}^{GL}-I) \Delta^2 u.
\end{array}
$$
Then 
\begin{equation}\label{3.9}
\int_{I_n}(\tilde{E}_t+\mathcal{L}_h^n \tilde{E},\Phi)\dt = \int_{I_n}\{ (f(u_{\tau, h}^0)-f(W_0),\varphi_1)+(\Delta^2 A_I^n,\varphi_1)+(A_{I\!I}^n+A_{I\!I\!I}^n,\varphi_1) \}\dt
\end{equation}
for all $ \Phi = (\varphi_0, \varphi_1) \in V_{k-1}^n \times V_{k-1}^n $ and $ n = 1, \ldots, N $.
\end{lemma}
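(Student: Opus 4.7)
The plan is to start from the discrete scheme \eqref{eq:adaptive_plate} for $U_{\tau,h}$ and subtract the corresponding expression evaluated at $W$. Writing $\Phi = (\varphi_0,\varphi_1)$ and using
\[
\mathcal{L}_h^n W = (-P_h^n W_1,\,A_h^n W_0), \qquad (P_h^n W_1,\varphi_0)=(W_1,\varphi_0),\quad (A_h^n W_0,\varphi_1)=(\laplace W_0,\laplace\varphi_1),
\]
the resulting identity splits cleanly into a $\varphi_0$-piece $\int_{I_n}(W_{0,t}-W_1,\varphi_0)\dt$ and a $\varphi_1$-piece $\int_{I_n}[(W_{1,t},\varphi_1)+(\laplace W_0,\laplace\varphi_1)]\dt$. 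Lemma~\ref{W_identity} immediately kills the $\varphi_0$-piece, so only the $\varphi_1$-contribution remains to be massaged into the three error terms $A_I^n$, $A_{I\!I}^n$, $A_{I\!I\!I}^n$.

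For the $\varphi_1$-piece I will use the continuous PDE $u_{tt}+\laplace^2 u = f(u)$ to replace $u_{tt}$ and algebraically group terms:
\[
f(u_{\tau,h}^0)-W_{1,t} = \bigl(f(u_{\tau,h}^0)-f(W_0)\bigr) + A_{I\!I\!I}^n + (I_n^{GL}-I)\laplace^2 u + A_{I\!I}^n + (f(u)-u_{tt}),
\]
and the last bracket equals $\laplace^2 u$. Collecting, everything outside the named error quantities is $(I_n^{GL}\laplace^2 u,\varphi_1)$, so the remaining task is to show
\[
(I_n^{GL}\laplace^2 u,\varphi_1)-(\laplace W_0,\laplace\varphi_1) = (\laplace^2 A_I^n,\varphi_1).
\]

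The main obstacle, and the only nonroutine step, is this last identity. The plan is to first derive the representation $W_0 = R_h^n(I_n^{GL}u - A_I^n)$. Substituting $W_1 = I_n^{GL}\omega_t$ into the definition~\eqref{W_0} of $W_0$ and splitting $\int_{t_{n-1}}^t I_n^{GL}\omega_t\,ds = \int_{t_{n-1}}^t\omega_t\,ds - \int_{t_{n-1}}^t(I-I_n^{GL})\omega_t\,ds$ telescopes to $\omega(t)-\omega(t_{n-1})-\int_{t_{n-1}}^t(I-I_n^{GL})\omega_t\,ds$; using $\omega(t_{n-1}^+)=\omega(t_{n-1})$ (since $R_h^n$ is used on both sides within $I_n$) and the fact that $R_h^n$ commutes with time integration and with $I_n^{GL}$ then yields the claimed form. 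Once this is in hand, the defining property \eqref{R_h} of $R_h^n$ strips off the $R_h^n$ in $(\laplace W_0,\laplace\varphi_1)$, and a spatial integration by parts using $\varphi_1 \in H_0^2(\Omega)$ converts $(\laplace(I_n^{GL}u-A_I^n),\laplace\varphi_1)$ into $(I_n^{GL}\laplace^2 u,\varphi_1)-(\laplace^2 A_I^n,\varphi_1)$, giving exactly the identity needed. Plugging this back and collecting terms produces~\eqref{3.9}.
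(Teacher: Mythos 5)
Your proposal is correct, and its skeleton coincides with the paper's: subtract the scheme evaluated at $W$, eliminate the $\varphi_0$-component via Lemma~\ref{W_identity}, and then reorganize the $\varphi_1$-component using the PDE, the definition of $R_h^n$, and spatial integration by parts. The only place where you genuinely deviate is the treatment of $\int_{I_n}(\laplace W_0,\laplace\varphi_1)\dt$. The paper exploits the exactness of the $(k+1)$-point Gauss--Lobatto rule for $\mathbb{P}_{2k-1}$ to replace $W_0=I_n^{GL}Z$ by $Z$ at the quadrature nodes, and only then splits the integrand into the pieces $T_1$ (giving $A_I^n$) and $T_2$ (giving $A_{I\!I\!I}^n$ via the PDE). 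You instead derive the closed-form representation $W_0 = R_h^n\bigl(I_n^{GL}u - A_I^n\bigr)$ directly, by commuting $I_n^{GL}$ with $R_h^n$ and with the time integral and using that $I_n^{GL}$ fixes constants-in-time, which lets you strip the projection with \eqref{R_h} and integrate by parts in space without ever invoking the quadrature. This is a legitimate and arguably cleaner packaging of the same facts: it isolates the entire reduction into one transparent identity, $(\laplace(I_n^{GL}u-A_I^n),\laplace\varphi_1)=(I_n^{GL}\laplace^2u,\varphi_1)-(\laplace^2A_I^n,\varphi_1)$, at the modest cost of requiring the commutation arguments to be spelled out explicitly rather than hidden inside pointwise evaluations at the Gauss--Lobatto nodes. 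Your algebraic regrouping of $f(u_{\tau,h}^0)-W_{1,t}$ checks out, and the signs match the statement.
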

\begin{proof}
    First of all, we mention that
    \begin{align*}
        \tilde{E}_t + \mathcal{L}_h^n \tilde{E} & = (\partial_t U_{\tau, h} + \mathcal{L}_h^n U_{\tau, h}) - (\partial_t W + \mathcal{L}_h^n W) 
        = F_h(U_{\tau, h}) - (\partial_t W + \mathcal{L}_h^n W),
    \end{align*}
    where $ F_h(U_{\tau, h}) = (0, P_h^n f(u_{\tau, h}^0)) $. Then, it follows
    \begin{align*}
        \int_{I_n} \multiscalar{\tilde{E}_t + \mathcal{L}_h^n \tilde{E}}{\Phi} \dt
        & = \int_{I_n} \multiscalar{F_h(U_{\tau, h})}{\Phi} - \multiscalar{W_t + \mathcal{L}_h^n W}{\Phi} \dt 
       \\
        & 
        = \int_{I_n} \multiscalar{f(u_{\tau, h}^0)}{\varphi_{1}} - \multiscalar{W_t + \mathcal{L}_h^n W}{\Phi} \dt.
    \end{align*}
    Using the definition of the operator $ \mathcal{L}_h^n $ and \eqref{eq:3.8}, we get
    \begin{align*}
        \int_{I_{n}} \multiscalar{W_t + \mathcal{L}_h^n W}{\Phi} \dt
        & = \int_{I_{n}} \scalar{W_{0,t} - W_{1}}{\varphi_0} + \scalar{W_{1, t}}{\varphi_1} + \scalar{\laplace W_0}{\laplace \varphi_1} \dt \\
        & = \int_{I_{n}} \scalar{W_{1, t}}{\varphi_1} + \scalar{\laplace W_0}{\laplace \varphi_1} \dt.
    \end{align*}
    With the definitions of $ W $ and $\omega $ along with the exactness of the quadrature rule, it follows 
   that
    \allowdisplaybreaks
    \begin{align*}
        \int_{I_{n}} \scalar{\laplace W_0}{\laplace \varphi_1} \dt
        & = \sum_{j=0}^{k} w_{n,j}^{GL} \scalar{\laplace Z}{\laplace \varphi_1}(t_{n, j}^{GL}) \\ 
        & = \sum_{j=0}^{k} w_{n,j}^{GL} \scalar{\laplace \left[\int_{t_{n-1}}^{t_{n, j}^{GL}} W_1 \ds + \omega(t_{n-1}^+)\right]}{\laplace \varphi_1(t_{n, j}^{GL})} \\
        & = \sum_{j=0}^{k} w_{n,j}^{GL} \scalar{\laplace \left[\int_{t_{n-1}}^{t_{n, j}^{GL}} I_{n}^{GL} u_t \ds + u(t_{n-1}^+)\right]}{\laplace \varphi_1(t_{n, j}^{GL})} \\
        & = \sum_{j=0}^{k} w_{n,j}^{GL} \scalar{\laplace \left[\int_{t_{n-1}}^{t_{n, j}^{GL}} I_{n}^{GL} u_t - u_t \ds\right]}{\laplace \varphi_1(t_{n, j}^{GL})} \\
        & \quad + \sum_{j=0}^{k} w_{n,j}^{GL} \scalar{\laplace \left[\int_{t_{n-1}}^{t_{n, j}^{GL}} u_t \ds + u(t_{n-1}^+) \right]}{\laplace \varphi_1(t_{n, j}^{GL})} \\
        & = T_1 + T_2.
    \end{align*}
    For the first term, we get
    \begin{align*}
        T_1 & = \int_{I_{n}} \scalar{\laplace I_{n}^{GL} \left[\int_{t_{n-1}}^{t} I_{n}^{GL} u_t - u_t \ds\right]}{\laplace \varphi_1} \dt \\
        & = \int_{I_{n}} \scalar{\laplace^2 I_{n}^{GL} \left[\int_{t_{n-1}}^{t} (I_{n}^{GL} - I) u_t \ds\right]}{\varphi_1} \dt \\
        & = - \int_{I_{n}} \scalar{\laplace^2 A_{I}^n}{\varphi_1} \dt.
    \end{align*}
    Using the exactness of the quadrature rule, partial integration and problem \eqref{eq:instationary_plate} with $ f = f(u) $, we obtain
    \begin{align*}
        T_2 & = \sum_{j=0}^{k} w_{n,j}^{GL} \scalar{\laplace \left[\int_{t_{n-1}}^{t_{n, j}^{GL}} u_t \ds + u(t_{n-1}^+) \right]}{\laplace \varphi_1(t_{n, j}^{GL})} \\
        & = \sum_{j=0}^{k} w_{n,j}^{GL} \scalar{\laplace u(t_{n, j}^{GL})}{\laplace \varphi_1(t_{n, j}^{GL})} \\
        & = \int_{I_{n}} \scalar{\laplace I_n^{GL} u}{\laplace \varphi_1} \dt \\
        & = \int_{I_{n}} \scalar{(I_n^{GL} - I) \laplace^2 u + f(u) - f(W_0)}{\varphi_1} \dt + \int_{I_{n}} \scalar{f(W_0) - u_{tt}}{\varphi_1} \dt \\
        & = - \int_{I_{n}} \scalar{A_{I\!I\!I}^n}{\varphi_1} \dt + \int_{I_{n}} \scalar{f(W_0) - u_{tt}}{\varphi_1} \dt.
    \end{align*}
    Combining the equations gives
    \begin{align*}
        \int_{I_n} \multiscalar{\tilde{E}_t + \mathcal{L}_h^n \tilde{E}}{\Phi} \dt
        & = \int_{I_n} \multiscalar{f(u_{\tau, h}^0) - f(W_0)}{\varphi_{1}} \dt + \int_{I_{n}} \scalar{u_{tt} - W_{1, t}}{\varphi_1} \dt \\
        & \quad
        + \int_{I_{n}} \scalar{\laplace^2 A_{I}^n}{\varphi_1} \dt
        + \int_{I_{n}} \scalar{A_{I\!I\!I}^n}{\varphi_1} \dt.
    \end{align*}
 This completes the proof.
\end{proof}

The next lemma demonstrates the approximation properties of $W_0$ and $W_1$ and further provides estimates for
$A_I^n$, $A_{I\!I}^n$ and $A_{I\!I\!I}^n$.


\begin{lemma}\label{lemma3_3} 
\begin{itemize}
\item[(i)]
Consider $W_0$ and $W_1$ as defined in \eqref{W_0} and \eqref{W_1}, respectively. For $p=2$ and $p=\infty$ it holds
\begin{align}\label{3.10}
\begin{aligned}
\Vert W_0 - u\Vert_{L^p(I_n;L^2)} & \le c \tau_n^{k+1} \Vert \vert u^{(k+1)} \vert
+\vert u^{(k+2)}\vert \Vert_{L^p(I_n;L^2)} \\
& \quad + ch_n^4 \Vert \vert u\vert + \tau_n \vert u_t \vert +\tau_n^2 \vert u_{tt}\vert \Vert_{L^p(I_n;H^4)},
\end{aligned}
\end{align}
\begin{align}\label{3.11}
\begin{aligned}
\Vert W_1 - u_t \Vert_{L^p(I_n;L^2)} & \le c \tau_n^{k+1} \Vert u^{(k+2)}\Vert_{L^p(I_n;L^2)}  
+ ch_n^4 \Vert \vert u_t\vert +\tau_n \vert u_{tt}\vert \Vert_{L^p(I_n;H^4)},
\end{aligned}
\end{align}
where $u^{(m)}:= \partial_t^m u$.
\item[(ii)]
Consider $A_I^n$, $A_{I\!I}^n$ and $A_{I\!I\!I}^n$ from Lemma~\ref{lemma_3.2}. They fulfill the following estimates 
\begin{align}\label{3.12}
\Vert \Delta^2 A_I^n\Vert_{L^2(I_n;L^2)} & \le c \tau_n^{k+1} \Vert \Delta^2 u^{(k+1)}\Vert_{L^2(I_n;L^2)},
\end{align}
\begin{align}\label{3.13}
\begin{aligned}
\vert \int_{I_n} (A_{I\!I}^n,\varphi)\dt\vert & \le c \left(\tau_n^{k+1}\Vert u^{(k+3)}\Vert_{L^2(I_n;L^2)}\right. \\
& \left.\quad +h_n^4 \Vert u_{tt}\Vert_{L^2(I_n;H^4)}\right)
\Vert \varphi\Vert_{L^2(I_n;L^2)}, \quad \forall \varphi\in V_{k-1},
\end{aligned}
\end{align}
\begin{align}\label{3.14}
\Vert A_{I\!I\!I}^n \Vert_{L^2(I_n;L^2)} & \le c \Vert W_0 - u\Vert_{L^2(I_n;L^2)} 
+ c \tau_n^{k+1} \Vert \Delta^2 u^{(k+1)}\Vert_{L^2(I_n;L^2)}.
\end{align}
\end{itemize}
\end{lemma}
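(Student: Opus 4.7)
The plan is to reduce both parts to two canonical building blocks: the spatial elliptic projection error $\|(I-R_h^n)v\| \le c h_n^4 \|v\|_{H^4}$ (plus its obvious lifts to $\Delta^2$) and the $(k+1)$-point Gauss--Lobatto temporal interpolation error/stability $\|(I-I_n^{GL})v\|_{L^p(I_n;L^2)} \le c\tau_n^{k+1}\|v^{(k+1)}\|_{L^p(I_n;L^2)}$, combined with the superconvergence afforded by the fact that Gauss--Lobatto quadrature is exact on $\mathbb{P}_{2k-1}$.

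For part (i), I would start with $W_1$ by inserting $R_h^n u_t$:
$W_1-u_t = I_n^{GL}(R_h^n u_t - u_t) + (I_n^{GL} u_t - u_t).$
$L^p$-stability of $I_n^{GL}$ handles the first term yielding an $h_n^4$ contribution, and the second is the standard interpolation error yielding $\tau_n^{k+1}\|u^{(k+2)}\|$. The $\tau_n|u_{tt}|$ factor in the mesh part for $p=\infty$ is recovered by writing $(R_h^n-I)u_t(t)=(R_h^n-I)u_t(t_{n-1})+\int_{t_{n-1}}^t (R_h^n-I)u_{tt}\,ds$. For $W_0$, I would use the representation $u(t)=u(t_{n-1}^+)+\int_{t_{n-1}}^t u_t\,ds$ and split
$W_0-u = I_n^{GL}\!\Bigl(\!\int_{t_{n-1}}^t (W_1-u_t)\,ds\Bigr) + (I_n^{GL}-I)\!\int_{t_{n-1}}^t u_t\,ds + (\omega-u)(t_{n-1}^+),$
then bound the three pieces by the $W_1$-estimate just proved, the standard Gauss--Lobatto interpolation error on $\int u_t\,ds$, and the pure elliptic error at $t_{n-1}$.

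For part (ii), the bound on $\Delta^2 A_I^n$ is immediate: since $\Delta^2$ and $I_n^{GL}$ act in disjoint variables, $\Delta^2 A_I^n = I_n^{GL}(\int_{t_{n-1}}^t (I-I_n^{GL})\Delta^2 u_t\,ds)$, and Cauchy--Schwarz on the time integral together with $L^2$-stability of $I_n^{GL}$ and the Gauss--Lobatto interpolation error gives \eqref{3.12}. The bound on $A_{III}^n$ is equally direct: $f(W_0)-f(u)$ is handled by Lipschitz continuity and part (i), and the remaining term $(I_n^{GL}-I)\Delta^2 u$ is the standard temporal interpolation error.

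The main difficulty is \eqref{3.13}. Here I would decompose
$A_{II}^n = (u_{tt}-R_h^n u_{tt}) + \bigl[(I-I_n^{GL})R_h^n u_t\bigr]_t.$
The first summand, paired with $\varphi\in V_{k-1}$ and Cauchy--Schwarz, produces the $h_n^4\|u_{tt}\|_{L^2(I_n;H^4)}$ contribution. For the second summand I would integrate by parts in time; the boundary terms vanish because $(I-I_n^{GL})R_h^n u_t$ is zero at $t_{n-1}$ and $t_n$ (both are Gauss--Lobatto nodes), reducing the task to estimating $\int_{I_n}\scalar{(I-I_n^{GL})R_h^n u_t}{\varphi_t}\,dt$. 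Since $\varphi_t$ is polynomial of degree $\le k-2$ in time, the product $(I_n^{GL} R_h^n u_t)\cdot\varphi_t$ has time-degree $\le 2k-2$, so Gauss--Lobatto quadrature integrates it exactly and, using $(I_n^{GL} R_h^n u_t)(t_{n,j}^{GL})=R_h^n u_t(t_{n,j}^{GL})$, this integral equals the GL-quadrature error for $R_h^n u_t\cdot\varphi_t$. Invoking the Peano-kernel representation for the $(k+1)$-point Gauss--Lobatto formula together with inverse estimates on $\varphi_t$ in time yields the superconvergent $\tau_n^{k+1}\|u^{(k+3)}\|_{L^2}\|\varphi\|_{L^2}$ bound. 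This superconvergence step, and in particular keeping the correct power of $\tau_n$ after the inverse estimates, is the step I expect to be the main obstacle.
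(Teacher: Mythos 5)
Your part (i) and your treatment of \eqref{3.12} and \eqref{3.14} are in substance the paper's proof: the same splittings into an elliptic--projection part and a Gauss--Lobatto part, controlled by the $L^2(I_n)$-stability of $I_n^{GL}$ (the paper's \eqref{3.15}), the primitive bound \eqref{3.16}, the interpolation error and \eqref{ell_proj_err2}, plus Lipschitz continuity of $f$ for $A_{I\!I\!I}^n$. For \eqref{3.13} your decomposition $A_{I\!I}^n=(I-R_h^n)u_{tt}+[(I-I_n^{GL})R_h^n u_t]_t$ differs from the paper's $A_{I\!I}^n=[(I-I_n^{GL})u_t]_t+(I_n^{GL}\eta_t)_t$ only by regrouping the $\eta$-terms; yours is actually cleaner on the spatial side, since $(I-R_h^n)u_{tt}$ is bounded directly by $ch_n^4\Vert u_{tt}\Vert_{L^2(I_n;H^4)}$ without the $H^1$--$L^2$ inverse estimate in time that the paper needs for $(I_n^{GL}\eta_t)_t$. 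The first integration by parts (boundary terms vanish at the Gauss--Lobatto endpoints) and the identification of $\int_{I_n}\scalar{(I-I_n^{GL})R_h^n u_t}{\varphi_t}\dt$ with the Gauss--Lobatto quadrature error of $t\mapsto\scalar{R_h^n u_t}{\varphi_t}$ are also correct.

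The gap is exactly where you predicted it. ``Peano kernel plus inverse estimates on $\varphi_t$'' does not, as stated, yield $c\,\tau_n^{k+1}\Vert u^{(k+3)}\Vert_{L^2(I_n;L^2)}\Vert\varphi\Vert_{L^2(I_n;L^2)}$. Writing $g=\scalar{R_h^n u_t}{\varphi_t}$ and using the Peano bound $\vert E_n(g)\vert\le c\tau_n^{k+2}\Vert g^{(k+2)}\Vert_{L^1(I_n)}$, Leibniz produces the cross terms $\scalar{(R_h^n u_t)^{(j)}}{\varphi_t^{(k+2-j)}}$ for $4\le j\le k+2$; after the inverse estimates $\Vert\varphi_t^{(k+2-j)}\Vert_{L^2(I_n)}\le c\tau_n^{\,j-k-3}\Vert\varphi\Vert_{L^2(I_n)}$ the $j$-th term carries only $\tau_n^{\,j-1}$, so the worst contribution is $O(\tau_n^{3})$ rather than $O(\tau_n^{k+1})$; only the top term $j=k+2$ has the right order. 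One must exploit the product structure: either use the best-approximation form of the quadrature error and subtract $q\,\varphi_t$ with $q\in\mathbb{P}_{k+1}(I_n)$ approximating $R_h^n u_t$ (so that only the $(k+2)$-nd time derivative survives), or --- as the paper does --- replace the Gauss--Lobatto interpolant of $u_t$ by the Lagrange interpolant $I^{n,k+1}u_t$ at the $k+1$ Gauss--Lobatto nodes plus one extra point (the difference, multiplied by $\varphi_t$, lies in $\mathbb{P}_{2k-1}$ and vanishes at all Gauss--Lobatto nodes, hence integrates to zero) and then integrate by parts a \emph{second} time to arrive at $-\int_{I_n}\scalar{[u_t-I^{n,k+1}u_t]_t}{\varphi}\dt$, which gives the clean $L^2$--$L^2$ pairing with factor $\tau_n^{k+1}\Vert u^{(k+3)}\Vert_{L^2(I_n;L^2)}$ and no inverse estimate at all. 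A second, minor point: your quadrature error involves $R_h^n u_t$ rather than $u_t$, so the derivative that ultimately appears is $R_h^n u^{(k+3)}$; since $R_h^n$ is only bounded via $\Vert R_h^n v\Vert\le c\Vert\laplace v\Vert$, you either end up with $\Vert\laplace u^{(k+3)}\Vert$ on the right-hand side or must re-split $R_h^n=I-(I-R_h^n)$, which reintroduces a (higher-order, hence harmless, but not present in \eqref{3.13}) term of size $h_n^4\tau_n^{k+1}$.
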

\begin{proof}
\begin{itemize}
\item[(i)]
Here, the more difficult case $p=2$ is considered. We will require the following 
results as discussed in~\cite{Karakashian2005convergence} 
\begin{align} \label{3.15}
\Vert I_{n}^{GL} \phi \Vert_{L^2(I_n;L^2)} \le c \Vert \phi \Vert_{L^2(I_n;L^2)} + c\tau_n \Vert \phi_t \Vert_{L^2(I_n;L^2)},
\end{align}
\begin{align}\label{3.16}
\Vert \phi \Vert_{L^2(I_n;L^2)} \le c\tau_n \Vert \psi \Vert_{L^2(I_n;L^2)}, \quad \text{where} \; \phi = \int_{t_{n-1}}^t \psi \ds. 
\end{align}
We have the representation 
\begin{align*}
W_0 - u = I_n^{GL} \int_{t_{n-1}}^t (W_1 -\omega_t)\ds + I_n^{GL}\omega - u.
\end{align*}
Using first \eqref{3.15} and then \eqref{3.16} one gets
\begin{align}\label{3.17}
\Vert I_n^{GL} \int_{t_{n-1}}^t(W_1-\omega_t)\ds \Vert_{L^2(I_n;L^2)} & \le 
c \Vert \int_{t_{n-1}}^t (W_1 -\omega_t) \ds \Vert_{L^2(I_n;L^2)} \\ & \quad \nonumber + c\tau_n \Vert W_1 -\omega_t \Vert_{L^2(I_n;L^2)} \\
& \nonumber \le 
c\tau_n  \Vert W_1 -\omega_t \Vert_{L^2(I_n;L^2)}.
\end{align}
Using~\eqref{W_1} and~\eqref{omega_eta} we write 
\begin{align*}
W_1 - \omega_t = I_n^{GL} \omega_t - \omega_t  = -I_{n}^{GL} \eta_t - (u_t-I_n^{GL}u_t)+\eta_t
\end{align*}
and applying~\eqref{3.15} and \eqref{ell_proj_err2} for $m=0$ we obtain
\begin{align}\label{3.18}
\begin{aligned}
\Vert I_n^{GL} \eta_t\Vert_{L^2(I_n;L^2)} + \Vert \eta_t \Vert_{L^2(I_n;L^2)} & \le c \Vert \eta_t \Vert_{L^2(I_n;L^2)} 
+ c\tau_n \Vert \eta_{tt}\Vert_{L^2(I_n;L^2)}  \\
& \le c h_n^4 \Vert u_t\Vert_{L^2(I_n;H^4)} + c \tau_n h_n^4 \Vert u_{tt}\Vert_{L^2(I_n;H^4)}. 
\end{aligned}
\end{align}
Moreover, for the approximation properties of $I_n^{GL}$ we have 
\begin{align}\label{3.19}
\Vert u_t - I_n^{GL} u_t\Vert_{L^2(I_n;L^2)} \le c \tau_n^{k+1}\Vert u^{(k+2)}\Vert_{L^2(I_n;L^2)}
\end{align}
and can estimate $I_n^{GL}\omega - u = -I_n^{GL} \eta + I_n^{GL} u - u$ as follows
\begin{align}\label{3.20}
\begin{aligned}
\Vert I_n^{GL}\omega - u\Vert_{L^2(I_n;L^2)} & \le \Vert I_n^{GL}\eta \Vert_{L^2(I_n;L^2)} + \Vert I_n^{GL}u - u\Vert_{L^2(I_n;L^2)} \\
& \le c\tau_n^{k+1} \Vert u^{(k+1)}\Vert_{L^2(I_n;L^2)} + c \Vert \eta\Vert_{L^2(I_n;L^2)} \\
& \quad +c \tau_n \Vert \eta_t \Vert_{L^2(I_n;L^2)} \\
& \le c\tau_n^{k+1}\Vert u^{(k+1)}\Vert_{L^2(I_n;L^2)} + c h_n^4 \Vert u\Vert_{L^2(I_n;H^4)} \\
& \quad + c\tau_n h_n^4 \Vert u_t \Vert_{L^2(I_n;H^4)}.
\end{aligned}
\end{align}
Collecting~\eqref{3.17}--\eqref{3.20} we obtain for $\Vert W_0-u\Vert_{L^2(I_n;L^2)}$ estimate~\eqref{3.10}.

Estimate~\eqref{3.11} follows directly from~\eqref{3.19} and \eqref{3.18} where we have used the representation 
$u_t-W_1 = u_t-I_n^{GL} u_t +I_n^{GL}\eta_t$.
\item[(ii)]
Using~\eqref{3.15}, \eqref{3.16} and \eqref{3.19} we obtain  
\begin{align*}
\Vert \Delta^2 A_I^n \Vert_{L^2(I_n;L^2)} & = \Vert I_n^{GL} \int_{t_{n-1}}^t (I-I_n^{GL})\Delta^2 u_t \ds \Vert_{L^2(I_n;L^2)} \\
& \le c \tau_n \Vert (I - I_n^{GL}) \Delta^2 u_t \Vert_{L^2(I_n;L^2)} 
 \le c \tau_n^{k+1} \Vert \Delta^2 u^{(k+1)}\Vert_{L^2(I_n;L^2)}
\end{align*}
which demonstrates~\eqref{3.12}.

Applying~\eqref{3.19} and the Lipschitz continuity of $f$, estimate~\eqref{3.14} is easily derived as follows
\begin{align*}
\Vert A_{I\!I\!I}^n \Vert_{L^2(I_n;L^2)} & \le \Vert f(W_0)-f(u)\Vert_{L^2(I_n;L^2)}+\Vert(I-I_n^{GL})\Delta^2 u\Vert_{L^2(I_n;L^2)} \\
& \le c \Vert W_0-u\Vert_{L^2(I_n;L^2)}+ c\tau_n^{k+1}\Vert \Delta^2 u^{(k+1)}\Vert_{L^2(I_n;L^2)}.
\end{align*}

Let $\varphi\in V_{k-1}$, then using the definition of $A_{I\!I}^n$, \eqref{W_1}, \eqref{omega_eta} and also that 
$\eta_t = u_t - \omega_t$ we obtain
\begin{align*}
\int_{I_n}(A_{I\!I}^n,\varphi)\dt = \int_{I_n}(u_{tt}-(I_n^{GL}u_t)_t,\varphi)\dt + \int_{I_n}((I_n^{GL}\eta_t)_t,\varphi)\dt = : \Gamma_1 + \Gamma_2.
\end{align*}

Integrating by parts and since the endpoints of $I_n$ are included in the Gauss-Lobatto points we find 
\begin{align*}
\Gamma_1 =  \int_{I_n}(u_{tt}-(I_n^{GL}u_t)_t,\varphi)\dt = - \int_{I_n}(u_{t}-I_n^{GL}u_t,\varphi_t)\dt.
\end{align*}

Next, introduce the Lagrange interpolation operator $I^{n,k+1}$ at the $k + 2$ points
of $[t_{n-1}, t_{n}]$ consisting of the $k + 1$ Gauss-Lobatto points $t_{n,0}^{GL},\ldots,t_{n,k}^{GL}$ and any number 
in $[t_{n-1}, t_{n}]$ that is distinct from these points. We have $\varphi_t \in \mathbb{P}_{k-2}$ in $t$ and 
$I^{n,{k+1}}u_t\in \mathbb{P}_{k+1}$ in $t$, and, therefore $(I^{n,k+1}u_t)\varphi_t$ is a polynomial of degree $2k-1$ in $t$ and 
it holds that 
\begin{align*}
\int_{I_n}(u_t-I_n^{GL}u_t,\varphi_t)\dt =  \int_{I_n} (u_t-I^{n,{k+1}}u_t,\varphi_t) \dt.
\end{align*}

Integrating by parts, applying the Cauchy-Schwarz inequality and using the approximation properties of the operator 
$I^{n,k+1}$, for $\Gamma_1$ we finally obtain 
\begin{align*}
\vert \Gamma_1\vert = \vert \int_{I_n} ([u_t-I^{n,k+1}u_t]_t, \varphi) \dt\vert \le c \tau_n^{k+1} \Vert u^{(k+3)} \Vert_{L^2(I_n;L^2)} 
\Vert \varphi\Vert_{L^2(I_n;L^2)}.
\end{align*}

Next, viewing $\eta_t(t_{n-1}^{+})$ as a constant in time function, $\Gamma_2$ takes the form
\begin{align*}
\Gamma_2 = \int_{I_n} ([I_n^{GL}(\eta_t-\eta_t(t_{n-1}^{+}))]_t,\varphi)\dt = \int_{I_n}([I_n^{GL}(\int_{t_{n-1}}^t \eta_{tt}\ds)]_t,\varphi) \dt
\end{align*}
and applying an $H^1-L^2$ inverse property furthermore gives us 
\begin{align*}
\vert \Gamma_2\vert \le c\tau_n^{-1}\Vert I_n^{GL}(\int_{t_{n-1}}^t \eta_{tt}\ds) \Vert_{L^2(I_n;L^2)} \Vert \varphi \Vert_{L^2(I_n;L^2)}.
\end{align*}
Finally, using \eqref{3.15}, \eqref{3.16} and \eqref{ell_proj_err2} we obtain the following estimate for $\Gamma_2$
\begin{align*}
\vert \Gamma_2\vert & \le c\tau_n^{-1}\left(\Vert \int_{t_{n-1}}^t \eta_{tt}\ds \Vert_{L^2(I_n;L^2)}+ 
\tau_n\Vert (\int_{t_{n-1}}^t \eta_{tt}\ds)_t \Vert_{L^2(I_n;L^2)}
\right) \Vert \varphi \Vert_{L^2(I_n;L^2)} \\
& \le 
c\tau_n^{-1}\left(\tau_n \Vert \eta_{tt} \Vert_{L^2(I_n;L^2)}
\right) \Vert \varphi \Vert_{L^2(I_n;L^2)} 
\le
\left(
c h_n^4 \Vert u_{tt} \Vert_{L^2(I_n;H^4)}
\right) \Vert \varphi \Vert_{L^2(I_n;L^2)}
\end{align*}
and with this the proof is complete.
\end{itemize}
\end{proof}




\textbf{Stability}. Our aim now is to estimate $\vertiii{\tilde{E}}=(\Vert \Delta \tilde{E}_0\Vert^2 + \Vert \tilde{E}_1\Vert^2)^{1/2}$. 
We consider~\eqref{3.9} with the test function $\Phi=\mathcal{P}_t^{n,k-1} \mathcal{A}_h^n \tilde{E}\in V_{k-1}\times V_{k-1}$, where 
\begin{align*}
\mathcal{P}_t^{n,k-1}=\begin{pmatrix}P_{t}^{n,k-1} & 0 \\ 0 & P_t^{n,k-1} \end{pmatrix}
\end{align*}
and 
\begin{align*}
\mathcal{A}_h^n \tilde{E}=\begin{pmatrix}A_h^n \tilde{E}_0  \\  \tilde{E}_1 \end{pmatrix}.
\end{align*}

It obviously holds 
\begin{align}\label{3.21}
\int_{I_n} (\tilde{E}_t,\Phi)\dt = \frac{1}{2}\vertiii{\tilde{E}(t_{n})}^2 - \frac{1}{2}\vertiii{\tilde{E}(t_{n-1}^{+})}^2.
\end{align}


From~\eqref{2.6} and the definition of $\mathcal{A}_h^n\tilde{E}$ it further follows 
\begin{equation}\label{3.22}
\int_{I_n} ( \mathcal{L}_h^n \tilde{E},\mathcal{P}_t^{n,k-1} \mathcal{A}_h^n \tilde{E} ) \dt 
= \sum_{j=1}^k w_{n,j} ( \mathcal{L}_h^n \tilde{E},\mathcal{A}_h^n \tilde{E} ) (t_{n,j})=0.
\end{equation}

For this particular choice of $\Phi$, in the right-hand side of~\eqref{3.9} only $\varphi_1=P_t^{n,k-1}\tilde{E}_1$ is present and because $f$ 
is a Lipschitz function the first term is bounded by the expression $c \Vert \tilde{E}_0\Vert_{L^2(I_n;L^2)}\Vert \tilde{E}_1\Vert_{L^2(I_n;L^2)}$ and 
moreover by $c \Vert \Delta \tilde{E}_0\Vert_{L^2(I_n;L^2)} \Vert \tilde{E}_1\Vert_{L^2(I_n;L^2)}$.
Estimates for the remaining terms are given in Lemma~\ref{lemma3_3} (see~\eqref{3.12}--\eqref{3.14}) and considering also~\eqref{3.21} 
and \eqref{3.22} for $n=1,\ldots,N$ we obtain
\begin{equation}\label{3.23}
\vertiii{\tilde{E}(t_{n})}^2 \leq \vertiii{\tilde{E}(t_{n-1}^{+})}^2 + c \vertiii{\tilde{E}}^2_{L^2(I_n;L^2)} + c\left(\tau_n^{k+1}\mathcal{E}_t^n 
+ c h_n^4 \mathcal{E}_x^n\right)^2.
\end{equation}

Here we have used the notation 
\begin{equation*}
\begin{array}{lclcl}
\mathcal{E}_t^n &=& \mathcal{E}_t^n(u,k) & = & \Vert  \vert u^{(k+1)}\vert 
+ \vert u^{(k+2)} \vert + \vert u^{(k+3)}\vert + \vert \Delta u^{(k+2)}\vert + \vert \Delta^2 u^{(k+1)}\vert \Vert_{L^2(I_n;L^2)} \\
\mathcal{E}_x^n & = & \mathcal{E}_x^n(u) &= &\Vert \vert u \vert + \vert u_t \vert + \vert u_{tt} \vert \Vert_{L^2(I_n;H^4)}.
\end{array}
\end{equation*}



\begin{lemma}
For any $n$, $1\le n \le N$ and $\tau_n$ sufficiently small the following estimate is fulfilled
\begin{align}\label{3.24}
\vertiii{\tilde{E}}^2_{L^2(I_n;L^2)} \le c\tau_n \vertiii{\tilde{E}(t_{n-1}^{+})}^2 + c\tau_n \left( \tau_n^{k+1} \mathcal{E}_t^n + c h_n^4 
\mathcal{E}_x^n \right)^2.
\end{align}
\end{lemma}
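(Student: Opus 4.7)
The plan is to bound the nodal values $\tilde E^{n,j}$ of $\tilde E$ at the $k$ Gauss-Legendre points on $I_n$ and then recover the $L^2(I_n;L^2)$-norm via the norm equivalence stated just before \eqref{2.5}. Since $\tilde E \in V_k^n\times V_k^n$ one has the representation $\tilde E(t) = \sum_{j=0}^k \hat l_{n,j}(t)\,\tilde E^{n,j}$ with $\tilde E^{n,0}=\tilde E(t_{n-1}^+)$. Using the norm equivalence, the claim reduces to showing
\[
\sum_{j=0}^k \vertiii{\tilde E^{n,j}}^2 \le C\,\vertiii{\tilde E^{n,0}}^2 + C\bigl(\tau_n^{k+1}\mathcal E_t^n + h_n^4\mathcal E_x^n\bigr)^2,
\]
so what really needs to be controlled is $\sum_{j=1}^k\vertiii{\tilde E^{n,j}}^2$.

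Next, I would test the identity \eqref{3.9} with $\Phi = \mathcal P_t^{n,k-1}\mathcal A_h^n \tilde E \in V_{k-1}^n\times V_{k-1}^n$, which by \eqref{2.6} is precisely the Gauss-Legendre interpolant of $\mathcal A_h^n \tilde E$. The stiffness term $\int_{I_n}(\mathcal L_h^n \tilde E,\Phi)\,dt$ then vanishes by the very same skew-symmetry and quadrature-exactness argument that produced \eqref{3.22}. Formula \eqref{2.7} turns the time-derivative term into
\[
\int_{I_n}(\tilde E_t,\Phi)\,dt = \sum_{i,j=1}^{k} m_{ij}\,a(\tilde E^{n,j},\tilde E^{n,i}) + \sum_{i=1}^{k} m_{i0}\,a(\tilde E^{n,0},\tilde E^{n,i}),
\]
where $a(v,w):=(\Delta v_0,\Delta w_0)+(v_1,w_1)$ is symmetric and satisfies $a(v,v)=\vertiii{v}^2$. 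After the diagonal rescaling used in Lemma~\ref{lemma2_1}, the double sum is coercive on $\sum_{j=1}^k\vertiii{\tilde E^{n,j}}^2$ with constant proportional to $\alpha$, while the single sum involving $\tilde E^{n,0}$ is split by Cauchy-Schwarz and Young's inequality into $\varepsilon\sum_{j=1}^k \vertiii{\tilde E^{n,j}}^2 + C_\varepsilon \vertiii{\tilde E^{n,0}}^2$.

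For the right-hand side of \eqref{3.9}, the Lipschitz bound $\|f(u_{\tau,h}^0)-f(W_0)\|\le c\,\|\tilde E_0\|$ together with the estimates \eqref{3.12}--\eqref{3.14} of Lemma~\ref{lemma3_3} gives
\[
|\mathrm{RHS}|\le C\bigl(\|\tilde E_0\|_{L^2(I_n;L^2)} + \tau_n^{k+1}\mathcal E_t^n + h_n^4\mathcal E_x^n\bigr)\,\|\Phi_1\|_{L^2(I_n;L^2)},
\]
and $\|\Phi_1\|_{L^2(I_n;L^2)}\le C\tau_n^{1/2}\bigl(\sum_{j=1}^k\vertiii{\tilde E^{n,j}}^2\bigr)^{1/2}$ by the norm equivalence applied to $\Phi_1\in V_{k-1}^n$. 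Young's inequality then absorbs the resulting $(\sum_j\vertiii{\tilde E^{n,j}}^2)^{1/2}$ factor (coming from the data terms and the mixed initial contribution) into the coercivity. The Lipschitz contribution $\|\tilde E_0\|_{L^2(I_n;L^2)}\le C\tau_n^{1/2}\bigl(\vertiii{\tilde E^{n,0}}^2+\sum_{j=1}^k\vertiii{\tilde E^{n,j}}^2\bigr)^{1/2}$ carries an extra $\tau_n^{1/2}$, so that squaring and Young's inequality produce a term $C\tau_n\sum_{j=1}^k\vertiii{\tilde E^{n,j}}^2$ which is absorbed on the left-hand side precisely when $\tau_n$ is sufficiently small; this is where the hypothesis on $\tau_n$ is used. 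Restoring the $\tau_n$-factor via the norm equivalence yields the claimed inequality.

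The main obstacle is the matrix algebra needed to extract the coercivity from Lemma~\ref{lemma2_1} once $\mathcal M=(m_{ij})$ couples the nodal unknowns $\tilde E^{n,j}$ through the non-diagonal bilinear form $a(\cdot,\cdot)$, and in particular bookkeeping the mixed $m_{i0}$-term so that only a benign constant-dependence on $\vertiii{\tilde E^{n,0}}^2$ appears. The argument is the direct analogue of the one given in \cite{Karakashian2005convergence} for the second-order wave equation, with $\mathcal A_h^n$ adapted to the biharmonic setting.
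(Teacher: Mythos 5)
There is a genuine gap, and it sits exactly at the coercivity step. You test \eqref{3.9} with $\Phi=\mathcal{P}_t^{n,k-1}\mathcal{A}_h^n\tilde{E}$, i.e.\ (by \eqref{2.6}) with the \emph{unweighted} Gauss--Legendre interpolant $\sum_{i=1}^k l_{n,i}(t)\,\mathcal{A}_h^n\tilde{E}^{n,i}$. But for that choice the time-derivative term is computed in the paper already: it telescopes, $\int_{I_n}(\tilde{E}_t,\Phi)\,\mathrm{d}t=\tfrac12\vertiii{\tilde{E}(t_n)}^2-\tfrac12\vertiii{\tilde{E}(t_{n-1}^+)}^2$ (this is \eqref{3.21}), and this quantity gives no lower bound on $\sum_{j=1}^k\vertiii{\tilde{E}^{n,j}}^2$ — it can vanish, or be negative, while the interior nodal values are large (take $\tilde{E}$ vanishing at $t_{n-1}^+$ and $t_n$ but not in between, which is possible for $k\ge 2$). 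Your remark that ``after the diagonal rescaling used in Lemma~\ref{lemma2_1} the double sum is coercive'' does not repair this: Lemma~\ref{lemma2_1} asserts positivity of $\tilde{\mathcal{M}}=D^{-1/2}\mathcal{M}D^{1/2}$, which is a \emph{similarity} transform of $\mathcal{M}$, not a congruence, so it says nothing about the sign of the quadratic form $\sum_{i,j}m_{ij}\,a(\tilde{E}^{n,j},\tilde{E}^{n,i})$ that your test function actually produces via \eqref{2.7}. You cannot rescale after the fact; the weights must be built into the test function.

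This is precisely what the paper does: it tests with $\Phi_{\tilde{E}}=\sum_{i=1}^k l_{n,i}(t)\,\bar{\tau}_i^{-1/2}\mathcal{A}_h^n\bar{E}^{n,i}=\sum_{i=1}^k l_{n,i}(t)\,\bar{\tau}_i^{-1}\mathcal{A}_h^n\tilde{E}^{n,i}$, so that the double sum becomes $\sum_{i,j}\tilde{m}_{ij}\,a(\bar{E}^{n,j},\bar{E}^{n,i})$ with the rescaled unknowns $\bar{E}^{n,j}=\bar{\tau}_j^{-1/2}\tilde{E}^{n,j}$, to which Lemma~\ref{lemma2_1} applies and yields the lower bound in \eqref{3.25}; the stiffness term still vanishes because the weights only multiply each Gauss-point contribution $(\mathcal{L}_h^n\tilde{E}^{n,j},\mathcal{A}_h^n\tilde{E}^{n,j})=0$. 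The rest of your argument — splitting the $m_{i0}$ term by Cauchy--Schwarz and Young, bounding the right-hand side of \eqref{3.9} via the Lipschitz property and \eqref{3.12}--\eqref{3.14}, using the norm equivalence to pass between $\sum_j\vertiii{\cdot}^2$ and the $L^2(I_n)$ norm, and absorbing the Lipschitz contribution for $\tau_n$ small — matches the paper's proof and would go through once the test function is corrected.
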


\begin{proof}
Let $\bar{E}^{n,j}=\bar{\tau}_j^{-1/2} \tilde{E}^{n,j}$, $j=1,\ldots,k$. Noting that $\tilde{E}^{n,0}=\tilde{E}(t_{n-1}^{+})$, we obtain 
\begin{equation*}
\tilde{E}(\vec{x},t)=\sum_{j=0}^k \hat{l}_{n,j}(t) \tilde{E}^{n,j}(\vec{x})=\sum_{j=1}^k \hat{l}_{n,j}(t)\bar{\tau}_j^{1/2} \bar{E}^{n,j}(\vec{x}) + \hat{l}_{n,0}(t)\tilde{E}^{n,0}(\vec{x}).
\end{equation*}

Let $\Phi=\Phi_{\tilde{E}}:=\sum_{i=1}^k l_{n,i}(t) \bar{\tau}_i^{-1/2}\mathcal{A}_h^n \bar{E}^{n,i}$ in~\eqref{3.9}, then 
\begin{align*}
\int_{I_n}(\mathcal{L}_h^n \tilde{E}, \Phi_{\tilde{E}}) \dt = \sum_{j=1}^k w_{n,j} \bar{\tau}_j^{-1} 
(\mathcal{L}_h^n \tilde{E}^{n,j},\mathcal{A}_h^n \tilde{E}^{n,j})=0.
\end{align*}

Next, using~\eqref{2.7} and also Lemma~\ref{lemma2_1} we obtain 
\begin{align}\label{3.25}
\begin{aligned}
\int_{I_n}(\tilde{E}_t,\Phi_{\tilde{E}})\dt & = \sum_{i,j=1}^k \tilde{m}_{ij}\left((\Delta \bar{E}_0^{n,j},\Delta \bar{E}_0^{n,i})
+(\bar{E}_1^{n,j},\bar{E}_1^{n,i})\right) 
\\
& \quad + \sum_{i=1}^k m_{i0} \bar{\tau}_i^{-1/2} \left((\Delta \tilde{E}_0^{n,0},\Delta \bar{E}_0^{n,i})
+(\tilde{E}_1^{n,0},\bar{E}_1^{n,i})\right)
\\
& \ge c \sum_{j=1}^k \vertiii{\bar{E}^{n,j}}^2 - c\left( \sum_{j=1}^k \vertiii{\bar{E}^{n,j}}^2 \right)^{1/2}\vertiii{\tilde{E}(t_{n-1}^{+})}.
\end{aligned}
\end{align}

We have that $\sum_{j=1}^k \vertiii{\bar{E}^{n,j}}^2$ and $\sum_{j=1}^k \vertiii{\tilde{E}^{n,j}}^2$ are equivalent modulo 
constants depending only on $\bar{\tau}_i$, $i=1,\ldots,k$, see also~\cite{Karakashian2005convergence}, 
which means that we can exchange them in the above estimate. Moreover, cf.~\cite{Karakashian1999aspacetime}, it holds 
that 
\begin{equation}\label{3.26}
c_1\tau_n \sum_{j=1}^k \vertiii{\tilde{E}^{n,j}}^2 \le  \vertiii{\tilde{E}}_{L^2(I_n;L^2)}^2 \le  c_2 \tau_n \sum_{j=1}^k \vertiii{\tilde{E}^{n,j}}^2.
\end{equation}  

Similarly, as before, we estimate the terms on the right side of~\eqref{3.9} with $\Phi=\Phi_{\tilde{E}}$ by 
$ c\left(\tau_n^{k+1}\mathcal{E}_t^n + c h_n^4 \mathcal{E}_x^n\right)^2$ which along with~\eqref{3.25} and \eqref{3.26} imply~\eqref{3.24}.


\end{proof}

For $ M_n \geq 2 $, we define the expressions
\begin{align*}
    \beta_n = \frac{\gamma_n}{M_n - 1}, \qquad
    \gamma_n =
    \begin{cases}
        0, & \text{if } V_h^{n+1} = V_h^{n}, \\
        1, & \text{otherwise},
    \end{cases}
\end{align*}
for $n = 1, \ldots, N-1$.

\begin{lemma} 
    The estimate
    \begin{align}\label{3.27}
        \begin{aligned}
            \triplenorm{\tilde{E}(t_{n-1}^+)}^2 & \leq (1 + \beta_{n-1} + \tau_{n-1}) \triplenorm{\tilde{E}(t_{n-1})}^2 + \gamma_{n-1} (M_{n-1} + \tau_{n-1}) \triplenorm{J^{n-1}}^2 \\
            & \quad + c B_{n-1}^2
        \end{aligned}
    \end{align}
    is valid for $ n = 2, \ldots, N $, where we use $ J^n = (\omega(t_n^+) - \omega(t_n), \omega_t(t_n^+) - \omega_t(t_n)) $
    and $ B_n = \norm{(I - I_n^{GL}) \laplace u_t}_{L^2(I_n; L^2)} $, which satisfies
    \begin{align}
        \label{eq:laplace_approx}
        B_{n} \leq c \tau_{n}^{q+1} \norm{\laplace u^{(q+2)}}_{L^2(I_{n}; L^2)}.
    \end{align}
\end{lemma}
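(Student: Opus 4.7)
The plan is to exploit the mesh-change update $U_{\tau,h}(t_{n-1}^+)=\Pi^n U_{\tau,h}(t_{n-1})$ together with the fact that $W(t_{n-1}^+)=(R_h^n u(t_{n-1}),\,R_h^n u_t(t_{n-1}))$ already lies in $V_h^n\times V_h^n$, so $\Pi^n W(t_{n-1}^+)=W(t_{n-1}^+)$. This yields the decomposition
\begin{equation*}
\tilde{E}(t_{n-1}^+)=\Pi^n\tilde{E}(t_{n-1})+\Pi^n\bigl(W(t_{n-1})-W(t_{n-1}^+)\bigr).
\end{equation*}
The first summand is controlled by the stability of $\Pi^n$: since $R_h^n$ is orthogonal in $(\laplace\cdot,\laplace\cdot)$ and $P_h^n$ is the $L^2$-orthogonal projection, $\triplenorm{\Pi^n\tilde{E}(t_{n-1})}\le\triplenorm{\tilde{E}(t_{n-1})}$, with equality when $\gamma_{n-1}=0$. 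The task reduces to rewriting the second summand as the sum of the jump $J^{n-1}$ and a small temporal quadrature remainder.

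I would compute $W(t_{n-1})-W(t_{n-1}^+)$ componentwise. For the velocity slot, since $t_{n-1}$ is a Gauss--Lobatto node, $W_1(t_{n-1}^{\pm})=\omega_t(t_{n-1}^{\pm})$, so $W_1(t_{n-1})-W_1(t_{n-1}^+)=-J_1^{n-1}$ exactly. For the displacement slot, I use the recursive formula $W_0(t_{n-1})=\int_{t_{n-2}}^{t_{n-1}}I_{n-1}^{GL}\omega_t\,ds+\omega(t_{n-2}^+)$ together with the telescoping identity $\omega(t_{n-1}^-)=\omega(t_{n-2}^+)+\int_{t_{n-2}}^{t_{n-1}}\omega_t\,ds$ (valid because $R_h^{n-1}$ is $t$-independent on $I_{n-1}$) to obtain
\begin{equation*}
W_0(t_{n-1})-\omega(t_{n-1}^-)=R_h^{n-1}\!\!\int_{t_{n-2}}^{t_{n-1}}(I_{n-1}^{GL}-I)u_t\,ds,
\end{equation*}
whose $\laplace$-norm is at most $\tau_{n-1}^{1/2}B_{n-1}$ after commuting $R_h^{n-1}$ with the time interpolation, invoking its $\laplace$-contractivity, and applying Cauchy--Schwarz in time. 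Adding and subtracting $\omega(t_{n-1}^-)$ then splits the displacement difference into this $O(\tau_{n-1}^{1/2}B_{n-1})$ remainder and the jump $-J_0^{n-1}$.

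Writing $a=\triplenorm{\tilde{E}(t_{n-1})}$, $b=\triplenorm{J^{n-1}}$, and $c\le\tau_{n-1}^{1/2}B_{n-1}$ for the three estimated pieces, the stability of $\Pi^n$ gives $\triplenorm{\tilde{E}(t_{n-1}^+)}\le a+\gamma_{n-1}b+c$. Expanding the square and applying Young's inequality to each cross term with parameters $\epsilon_1=\beta_{n-1}$ (on the $ab$ term), $\epsilon_2=\tau_{n-1}$ (on the $ac$ term) and $\epsilon_3=\tau_{n-1}$ (on the $bc$ term), and using $\gamma_{n-1}^2=\gamma_{n-1}$ together with $1+1/\beta_{n-1}=M_{n-1}$ when $\gamma_{n-1}=1$, produces the coefficients $(1+\beta_{n-1}+\tau_{n-1})$ on $a^2$, $\gamma_{n-1}(M_{n-1}+\tau_{n-1})$ on $b^2$, and $1+(1+\gamma_{n-1})/\tau_{n-1}$ on $c^2$; since $c^2\le\tau_{n-1}B_{n-1}^2$, the last contribution collapses to $cB_{n-1}^2$, giving exactly \eqref{3.27}. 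The auxiliary bound \eqref{eq:laplace_approx} on $B_n$ is the standard $L^2$-interpolation-error estimate for the degree-$k$ Gauss--Lobatto Lagrange interpolant applied to the smooth function $\laplace u_t$, with $q=k$.

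The main obstacle I anticipate is the algebraic identification of $W_0(t_{n-1})-\omega(t_{n-1}^-)$ with the temporal residual of $R_h^{n-1}u_t$: one must carefully unwind the recursive definition of $W_0$ on $I_{n-1}$ through $\omega(t_{n-2}^+)$, telescope it against the fundamental theorem of calculus applied to $\omega=R_h^{n-1}u$, and pull $R_h^{n-1}$ outside the time interpolation so that its $\laplace$-contractivity can be used. Once this identification is in place and the projection stability of $\Pi^n$ is invoked, the remaining step is a mechanical application of Young's inequality that produces the precise coefficients $\beta_{n-1}$, $M_{n-1}$ and the $\tau_{n-1}$-perturbations stated in the lemma.
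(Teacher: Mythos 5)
Your proposal is correct and follows essentially the same route as the paper: the decomposition $\tilde{E}(t_{n-1}^+)=\Pi^n\tilde{E}(t_{n-1})+\Pi^n\bigl(W(t_{n-1})-W(t_{n-1}^+)\bigr)$, the contractivity of $P_h^n$ in $L^2$ and of $R_h^n$ in the $\laplace$-seminorm, the identification of the velocity and displacement jumps with $J^{n-1}$, the telescoping of the recursive definition of $W_0$ into the Gauss--Lobatto residual $\int_{t_{n-2}}^{t_{n-1}}(I-I_{n-1}^{GL})\laplace u_t\,\mathrm{d}s$ bounded by $\tau_{n-1}^{1/2}B_{n-1}$, and the concluding Young/arithmetic--geometric mean step. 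Your explicit choice of the Young parameters producing the coefficients $1+\beta_{n-1}+\tau_{n-1}$ and $\gamma_{n-1}(M_{n-1}+\tau_{n-1})$ is a welcome elaboration of what the paper leaves implicit.
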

\begin{proof}
    First of all, we recall $ \triplenorm{\tilde{E}}^2 = \norm{\laplace \tilde{E}_0}^2 + \norm{\tilde{E}_1}^2 $ and $ \tilde{E} = U_{\tau,h} - W $. Using the identities
    $ W_1(t_{n-1}^+) = I_{n}^{GL} \omega_t(t_{n-1}^+) = \omega_t(t_{n-1}^+) $ and $ u_{\tau, h}^1(t_{n-1}^+) = P_h^{n} u_{\tau, h}^1(t_{n-1}) $, we obtain
    \begin{align*}
        \norm{\tilde{E}_1(t_{n-1}^+)} & = \norm{u_{\tau, h}^1(t_{n-1}^+) - W_1(t_{n-1}^+)} \\
        & \leq \norm{P_h^{n} u_{\tau, h}^1(t_{n-1}) - P_h^{n} \omega_t(t_{n-1})} + \norm{P_h^{n} \omega_t(t_{n-1}) - \omega_t(t_{n-1}^+)} \\
        & = \norm{P_h^{n} [u_{\tau, h}^1(t_{n-1}) - W_1(t_{n-1})]} + \norm{P_h^{n} [\omega_t(t_{n-1}) - \omega_t(t_{n-1}^+)]} \\
        & \leq \norm{\tilde{E}_1(t_{n-1})} + \norm{\omega_t(t_{n-1}^+) - \omega_t(t_{n-1})}.
    \end{align*}
    With the definition $ u_{\tau, h}^0(t_{n-1}^+) = R_h^{n} u_{\tau, h}^0(t_{n-1}) $, we have
    \begin{align*}
        \norm{\laplace \tilde{E}_0(t_{n-1}^+)} & = \norm{\laplace [u_{\tau, h}^0(t_{n-1}^+) - W_0(t_{n-1}^+)]} \\
        & = \norm{\laplace [R_h^{n} u_{\tau, h}^0(t_{n-1}) - W_0(t_{n-1}^+)]} \\
        & \leq \norm{\laplace R_h^{n}[u_{\tau, h}^0(t_{n-1}) - W_0(t_{n-1})]} + \norm{\laplace R_h^{n}[W_0(t_{n-1}) - W_0(t_{n-1}^+)]} \\
        & \leq \norm{\laplace \tilde{E}_0(t_{n-1})} + \norm{\laplace [W_0(t_{n-1}^+) - W_0(t_{n-1})]}.
    \end{align*}
    By using the definition of $ W_0 $ and the identity $ R_h^{n-1} u(t_{n-1}) = \omega(t_{n-1}) $ along with the Cauchy-Schwarz inequality, it follows that
    \begin{align*}
        & \quad \norm{\laplace[W_0(t_{n-1}^+) - W_0(t_{n-1})]} \\
        & \leq \norm{\laplace[W_0(t_{n-1}^+) - R_h^{n-1} u(t_{n-1})]} + \norm{\laplace R_h^{n-1} u(t_{n-1}) - W_0(t_{n-1})} \\
        %
        %
        & = \norm{\laplace[W_0(t_{n-1}^+) - R_h^{n-1} u(t_{n-1})]} + \norm{\laplace R_h^{n-1}[u(t_{n-1}) - \int_{t_{n-2}}^{t_{n-1}} I_{n-1}^{GL} u_t \dt - u(t_{n-2}^+)]} \\
        & \leq \norm{\laplace[\omega(t_{n-1}^+) - \omega(t_{n-1})]} + \norm{\laplace [u(t_{n-1}) - u(t_{n-2}^+) - \int_{t_{n-2}}^{t_{n-1}} I_{n-1}^{GL} u_t \dt]} \\
        & = \norm{\laplace[\omega(t_{n-1}^+) - \omega(t_{n-1})]} + \norm{\int_{t_{n-2}}^{t_{n-1}} (I - I_{n-1}^{GL}) \laplace u_t \dt} \\
        %
        %
        & \leq \norm{\laplace[\omega(t_{n-1}^+) - \omega(t_{n-1})]} + c \tau_{n-1}^{1/2} \norm{(I - I_{n-1}^{GL}) \laplace u_t}_{L^2(I_{n-1}; L^2)}
    \end{align*}
    Combining the estimates along with the arithmetic-geometric mean inequality yields the assertion.
\end{proof}

\begin{theorem}
\label{thm:3.1}
Let $ u $ be the solution of \eqref{eq:instationary_plate} with right-hand side $ f(u) $ and $ U_{\tau, h} = (u_{\tau, h}^0, u_{\tau, h}^1) $ be the discrete solution of \eqref{eq:adaptive_plate}. Then, the estimate
\begin{align}\label{error_semi_linear}
\max_{t\in[0,T]} \Vert E\Vert 
\le c \sum_{n=0}^{N-1} e^{c(T-t_n)} \left\{
\tau_n^{k+1} \mathcal{E}_t^n + h_n^4 \mathcal{E}_x^n
\right\}
+c e^{CT} \sqrt{N}_C \max_n \Vert J^n\Vert
\end{align}
holds where $ \mathcal{N}_C $ denotes the number of times where $ S_h^j \neq S_h^{j-1}, \, j = 1, \ldots, N-1 $. Furthermore, we get
\begin{align}
    \begin{aligned}
        \max_{t \in [0, T]} (\norm{u(t) - u_{\tau, h}^0} + \norm{u_t(t) - u_{\tau, h}^1})
        & \leq C [\max_{n} \tau_n^{k+1} C_t(u) + \max_{n} h_n^4 C_x(u) \\
        & \quad + \sqrt{\mathcal{N}_C} \max_{n} \triplenorm{J^n}]
    \end{aligned}
\end{align}
and
\begin{align}
    \max_{t \in [0, T]} \norm{u(t) - u_{\tau, h}^0}_{L^\infty} \leq C L_h [\max_{n} \tau_n^{k+1} C_t(u) + \max_{n} h_n^4 C_x(u) + \sqrt{\mathcal{N}_C} \max_{n} \norm{J^n}].
\end{align} 
\end{theorem}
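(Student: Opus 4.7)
The plan is to reduce everything to a discrete Gronwall-type argument on the modified error $\tilde E = U_{\tau,h} - W$ introduced before Lemma~\ref{W_identity}, and then recover the full error $E = U - U_{\tau,h}$ by the triangle inequality together with the interpolation bounds $(3.10)$ and $(3.11)$. The ingredients for the Gronwall step are already in place: the stability estimate $(3.23)$, the $L^2$-in-time bound $(3.24)$, and the jump estimate $(3.27)$ from the previous lemma.

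First I would fix $n$ and absorb $(3.24)$ into $(3.23)$, obtaining the one-step bound
\begin{equation*}
\triplenorm{\tilde E(t_n^-)}^2 \;\le\; \bigl(1+c\tau_n\bigr)\triplenorm{\tilde E(t_{n-1}^+)}^2 + c\bigl(\tau_n^{k+1}\mathcal E_t^n + h_n^4 \mathcal E_x^n\bigr)^2.
\end{equation*}
Composing this with the jump bound $(3.27)$ at $t_{n-1}$ yields a recursion of the form
\begin{equation*}
\triplenorm{\tilde E(t_n^-)}^2 \;\le\; (1+\beta_{n-1}+c\tau_{n-1})\,\triplenorm{\tilde E(t_{n-1}^-)}^2 + \gamma_{n-1}(M_{n-1}+\tau_{n-1})\triplenorm{J^{n-1}}^2 + c\bigl(\tau_n^{k+1}\mathcal E_t^n + h_n^4 \mathcal E_x^n\bigr)^2 + cB_{n-1}^2,
\end{equation*}
where the $B_{n-1}$-term from $(3.27)$ is controlled by $(\ref{eq:laplace_approx})$ and hence absorbed into the $\tau^{k+1}$ part of $\mathcal E_t$. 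Iterating from $n$ down to $0$, using the elementary inequality $\prod_{j}(1+a_j)\le \exp(\sum a_j)$, the $\tau_{n-1}$-contributions accumulate to $e^{cT}$, while the $\beta_{n-1}=\gamma_{n-1}/(M_{n-1}-1)$ contributions only appear at indices where the mesh changes, of which there are $\mathcal N_C$ many. Choosing $M_n$ so that $\sum_{j}\beta_j \le c$ (for instance $M_n\sim \mathcal N_C$ at each mesh-change index) and using $\gamma_{n-1}(M_{n-1}+\tau_{n-1})\le c\mathcal N_C$ in the forcing yields the telescoped bound
\begin{equation*}
\triplenorm{\tilde E(t_n^-)}^2 \;\le\; c\sum_{j=0}^{n-1} e^{c(t_n-t_j)}\bigl(\tau_j^{k+1}\mathcal E_t^j + h_j^4 \mathcal E_x^j\bigr)^2 + c\,e^{cT}\mathcal N_C \max_j \triplenorm{J^j}^2,
\end{equation*}
and taking square roots gives exactly $(\ref{error_semi_linear})$.

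To pass from $t_n^-$ to arbitrary $t\in \bar I$, I apply $(2.5)$ inside the interval $I_n$ combined with the $L^2(I_n)$-bound $(3.24)$; this introduces only an extra multiplicative constant. The full error estimate then follows from $\|E(t)\|\le \|U(t)-W(t)\| + \|\tilde E(t)\|$ and the interpolation estimates $(3.10)$--$(3.11)$ in $L^\infty(I_n;L^2)$, both terms fitting into the pattern $\tau_n^{k+1} C_t(u) + h_n^4 C_x(u)$. Finally, the $L^\infty(\Omega)$-bound is obtained by an inverse-type inequality $\|v_h\|_{L^\infty}\le L_h\|v_h\|$ on $V_h$ (for BFS elements $L_h=O(|\log h|^{1/2})$ or $O(1)$ depending on the norm used), applied to the discrete part of the error after the elliptic projection error is absorbed via $H^2\hookrightarrow C(\bar\Omega)$ noted in Section~\ref{sec:preliminary}.

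The main obstacle is the careful bookkeeping of the jump contributions: every time the spatial mesh changes, the recursion picks up an amplification factor $(1+\beta_{n-1})$ as well as a forcing $\gamma_{n-1}(M_{n-1}+\tau_{n-1})\|J^{n-1}\|^2$. Naively this gives a factor $\mathcal N_C$ inside the exponential and $\mathcal N_C$ (not $\sqrt{\mathcal N_C}$) in front of $\max_n\|J^n\|$. The trick, following \cite{Karakashian2005convergence}, is the balanced choice of $M_n$ so that the geometric amplification sums to $O(1)$ while the forcing grows only like $\mathcal N_C$; taking the square root after Gronwall then produces the announced $\sqrt{\mathcal N_C}$. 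Everything else — the use of $(3.23)$, $(3.24)$, $(3.27)$, Lemma~\ref{lemma3_3}, and the inverse/embedding step at the end — is essentially routine once this bookkeeping is set up.
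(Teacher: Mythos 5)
Your proposal is correct and follows essentially the same route as the paper: combine the stability bound \eqref{3.23}, the $L^2$-in-time bound \eqref{3.24} and the jump estimate \eqref{3.27} into a one-step recursion, iterate it with the balanced choice $M_m = \mathcal{N}_C + 1$ so that the amplification $(1+2\beta)^{M-1}$ stays bounded by $e^2$ while the jump forcing grows only like $\mathcal{N}_C$ (yielding $\sqrt{\mathcal{N}_C}$ after the square root), and then recover the full error via the triangle inequality with Lemma~\ref{lemma3_3} and pass to arbitrary $t$ using \eqref{2.5}. The only detail you omit, which the paper records explicitly, is the starting value $\triplenorm{\tilde E(t_0^+)} \le c\, h_0^4\, \mathcal{E}_x^1$ coming from $\tilde E(t_0^+) = (0,(P_h^1 - R_h^1)u_1)$; this is minor.
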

\begin{proof}
    First of all, we have $ \tilde{E}(t_0^+) = (0, (P_h^1 - R_h^1) u_1) $. With that follows $ \triplenorm{\tilde{E}(t_0^+)} \leq c h_0^4 \mathcal{E}_x^1 $. Combining the estimates \eqref{3.23}, \eqref{3.24} and \eqref{3.27} yields
    \begin{align}
        \label{eq:recursion}
        \begin{aligned}
            \triplenorm{\tilde{E}(t_{n})}^2 & \leq (1 + c \tau_{n}) \left[(1 + \beta_{n-1} + \tau_{n-1}) \triplenorm{\tilde{E}(t_{n-1})}^2 + \gamma_{n-1} (M_{n-1} + \tau_{n-1}) \triplenorm{J^{n-1}}^2\right] \\
            & \quad + (1 + c \tau_{n}) [c B_{n-1}^2 + c(\tau_{n}^{k + 1} \mathcal{E}_t^n + h_n^4 \mathcal{E}_x^n)^2]
        \end{aligned}
    \end{align}
    for $ n = 1, \ldots, N $. Recursive application of the inequality \eqref{eq:recursion} together with \eqref{eq:laplace_approx} leads for $ n = 1, \ldots, N $ to
    \begin{align}
        \label{eq:recursion_result}
        \triplenorm{\tilde{E}(t_n)}^2 & \leq c \sum_{m = 0}^{n-1} C_{m, n-1} \left[(\tau_m^{k+1} \mathcal{E}_t^m + h_m^2 \mathcal{E}_x^m)^2\right.
        \left. + \gamma_m (M_m + \tau_{m-1}) \triplenorm{J^m}^2 \right],
    \end{align}
    where $ C_{m, n-1} = \prod_{j=m}^{n-1} (1 + c \tau_j) (1 + \beta_j + \tau_{j-1}) $. For fixed $ n $, let $ \mathcal{N}_C(n-1) $ be the number of times where $ S_h^j \neq S_h^{j-1}, \, j = 1, \ldots, n-1 $ holds. Then, we define $ M_m = M = \mathcal{N}_C(n-1) + 1, \, m = 1, \ldots, n-1 $. So we obtain $ \beta_j = \beta = \frac{1}{M-1} $ if $ S_h^j \neq S_h^{j-1} $ and $ \beta_j = 0 $ otherwise. Then, we have
    \begin{align*}
        C_{m, n-1} & \leq \prod_{j = m}^{n-1} (1 + c \tau_j) \prod_{\stackrel{j=m}{\beta < \tau_{j-1}}}^{n-1} (1 + 2 \tau_{j-1}) \prod_{\stackrel{j=m}{\beta \geq \tau_{j-1}}}^{n-1} (1 + 2 \beta) \\
        & \leq e^{c (t_n - t_m)} \cdot e^{2(t_{n-1} - t_{m-1})} \cdot (1 + 2 \beta)^{M - 1}
        \leq e^{c (t_{n} - t_{m-1})} \cdot e^2
    \end{align*}
    for $ 0 \leq m \leq n-1 $. Inserting this into \eqref{eq:recursion_result} yields
    \begin{align}\label{3.33}
        \max_{1 \leq n \leq N} \triplenorm{\tilde{E}(t_n)}^2 \leq c \sum_{n = 0}^{N-1} e^{c (T - t_n)} [\tau_n^{k+1} \mathcal{E}_t^n + h_n^4 \mathcal{E}_x^n]^2 + c e^{cT} \mathcal{N}_C \max_{n} \triplenorm{J^n}^2.
    \end{align}
    Consequently applying~\eqref{3.24}, \eqref{3.27}, \eqref{3.33} and \eqref{2.5} provides the final result.
\end{proof}

\end{document}